\theoremstyle{plain}
\newtheorem{thm}{Theorem}[section]
\newtheorem{lem}[thm]{Lemma}
\newtheorem{defn}[thm]{Definition}
\newtheorem{prop}[thm]{Proposition}
\newtheorem{cor}[thm]{Corollary}
\newtheorem{rem}[thm]{Remark}
\newcommand{\C}{{\mathbb C}}
\newcommand{\R}{{\mathbb R}}
\newcommand{\N}{{\mathbb N}}
\newcommand{\e}{{\varepsilon}}
\newcommand{\Bn}{\mathbb{B}^n}
\renewcommand{\Re}{\operatorname{Re}}
\title[Hankel bilinear forms on  Fock-Sobolev spaces ]{Hankel bilinear forms on generalized Fock-Sobolev spaces on 
 $\C^n$}
\author{Carme Cascante}
\address{C. Cascante: Departament de Matem\`atiques i
    Inform\`atica, Universitat  de Barcelona,
     Gran Via 585, 08071 Barcelona, Spain}
\email{cascante@ub.edu}
\author{Joan F\`abrega}
\address{J. F\`abrega: Departament de Matem\`atiques i
    Inform\`atica, Universitat  de Barcelona,
     Gran Via 585, 08071 Barcelona, Spain}
\email{joan$_{-}$fabrega@ub.edu}
\author{Daniel Pascuas}
\address{D. Pascuas: Departament de Matem\`atiques i
    Inform\`atica, Universitat  de Barcelona,
     Gran Via 585, 08071 Barcelona, Spain}
\email{daniel$_{-}$pascuas@ub.edu}
\keywords{ Bilinear forms, Fock-Sobolev spaces, 
Small Hankel operator, Schatten class operator, 
Bergman kernel}
\subjclass[2010]{47B35; 47B10; 32A37; 30H20; 32A25}
\date{\today}
\thanks{The research 
	was supported in part by
        Ministerio de Econom\'{\i}a y Competitividad, 
Spain,   projects  
MTM2017-83499-P, MTM2017-90584-REDT 
and Generalitat de Catalunya, 
project
2017SGR358. The first author was also supported in 
part by Ministerio de Econom\'{\i}a y Competitividad, 
Spain,   project MDM-2014-0445}
\begin{document}

\begin{abstract}
We characterize the boundedness of 
 Hankel bilinear forms on a product of generalized 
Fock-Sobolev spaces on $\C^n$ 
with respect  to the weight 
$(1+|z|)^\rho e^{-\frac{\alpha}2|z|^{2\ell}}$, 
for $\ell\ge 1$, 
$\alpha>0$ and $\rho\in\R$.
We obtain  a weak decomposition of the Bergman kernel with estimates 
and a Littlewood-Paley formula, which are key ingredients in the proof of our main results.
As an application, we characterize  
the boundedness, compactness and the 
membership in the Schatten class of small Hankel 
operators on these spaces. 

\end{abstract}
\maketitle	

\section{Introduction} 
The main goal of this work is the characterization of the 
boundedness of  Hankel bilinear forms  on generalized 
Fock-Sobolev spaces.

Given a fixed number $\ell\ge 1$, 
 for $1\le p<\infty$,  $\alpha\ge 0$ and 
 $\rho\in\R$, we consider the space 
$L^{p,\ell}_{\alpha, \rho}:=L^{p,\ell}_{\alpha, \rho}(\C^n)$ 
 of all measurable functions $f$ on $\C^n$ 
 such that
	\[
	\|f\|^p_{L^{p,\ell}_{\alpha, \rho}}:=
	\int_{\C^n} \bigl|f(z)(1+|z|)^{\rho }
e^{-\frac \alpha 2 |z|^{2\ell}}\bigr|^p dV(z)
<\infty,
	\]
that is, $L^{p,\ell}_{\alpha, \rho}=L^p(\C^n;
(1+|z|)^{\rho p}e^{-\frac{\alpha p}2|z|^{2\ell}}dV(z))$.
Here $dV=dV_n$ denotes the Lebesgue measure on 
$\C^n$  normalized so that the measure of the unit ball 
$\Bn$  is 1. As usual, if $p=\infty$, 
 $L^{\infty,\ell}_{\alpha,\rho}
:=L^{\infty,\ell}_{\alpha,\rho}(\C^n)$
 consists of all measurable functions $f$ on $\C^n$
 such that
$	\displaystyle{
	\|f\|_{L^{\infty,\ell}_{\alpha,\rho}}:=
	\operatorname*{ess\,sup}_{z\in\C^n}|f(z)|
(1+|z|)^{\rho}e^{-\frac{\alpha}2|z|^{2\ell}}<\infty}.
$

For $\alpha>0$, we define the generalized Fock-Sobolev 
spaces  $F^{p,\ell}_{\alpha, \rho}:=H\cap L^{p,\ell}_{\alpha, \rho}$,
where  $H=H(\C^n)$ is the space of entire 
 functions on $\C^n$. We also  consider the little Fock space $\mathfrak{f}^{\infty,\ell}_{\alpha,\rho}$,
which is the closure of the space of  holomorphic polynomials in $F^{\infty,\ell}_{\alpha,\rho}$. Note that,
 for any $1 \le p<\infty$,  
 the holomorphic polynomials are also dense in  
$F^p_{\alpha,\rho}$
(see, for instance, \cite[Chapter 2]{Zhu2012} and 
Remark \ref{rem:density} below).

Since  $\ell\ge 1$ is fixed, 
from now on we will skip  it in our notations.
If $\rho=0$ we get the generalized Fock spaces
 $F^{p}_{\alpha}=F^{p}_{\alpha, 0}$, and we write
 $L^{p}_{\alpha}=L^{p}_{\alpha, 0}$.

Note that the space $L^{2}_\alpha$ is a Hilbert space with
 the inner product  given by the $\alpha$-pairing 
\begin{equation*}
\langle f,g\rangle_\alpha:=\int_{\C^n}
 f(z)\overline{g(z)}e^{-\alpha|z|^{2\ell}}dV(z),
\end{equation*}
and  $F^{2}_\alpha$ is a closed linear subspace of
 $L^{2}_\alpha$.

The Fock-Sobolev spaces $F^p_{\alpha,\rho}$ are the 
natural setting when we are dealing with Fock spaces. 
For instance, the pointwise estimate of a function in 
$F^p_{\alpha}$  as well as the norm estimates of the 
Bergman kernel are given in terms of weights 
corresponding to Fock-Sobolev spaces (see Corollary 
\ref{cor:pointwise} and Proposition 
\ref{prop:pqnormBergman}). Moreover, each derivative 
of a Fock function is in a Fock-Sobolev space 
(see  Theorem \ref{thm:LP}). 
So these spaces have been  subject of 
interest by several authors in recent years, specially for 
the case $\ell=1$ 
(see for instance \cite{Cho-Zhu}, 
\cite{Cho-Choe-Koo}, \cite{Cho-Isralowitz-Joo}, 
\cite{Mengestie} and the references therein).
As it happens for $\ell=1$ (see, for instance, \cite{isralowitz}, \cite{Ca-Fa-Pe} and the references therein), the model spaces $F^{p,\ell}_{\alpha,\varrho}$, $\ell>1$, should be useful to solve 
certain problems in  weighted Fock spaces 
$F^{p,\ell}_{\alpha}(\omega)$. 
This will be 
the object of forthcoming works.

We recall that a Hankel bilinear form  on a product of function spaces is a bilinear form $\Lambda$ satisfying 
$\Lambda(f,g)=\Lambda(fg,1)$.

Our first result characterizes the 
boundedness of the Hankel bilinear forms   on 
$F^p_{\alpha,\rho}\times F^{p'}_{\beta,\eta}$, where 
$p'=p/(p-1)$,  which extends the classical result in 
\cite{janson-peetre-rochberg} for $\ell=1$ and $\varrho=0$ (see also the recent paper \cite{Wang-Hu}).
In order to state our theorem, we consider 
 the space $E$  of entire functions of order $\ell$ and finite type, that is,
$
E=E(\C^n):=\{f\in H(\C^n):\,|f(z)|=O(e^{\tau |z|^\ell}), 
\,\text{for some }\,\tau>0\},
$
which is dense in $\mathfrak{f}^{\infty}_{\alpha,\rho}$ and  in $F^p_{\alpha,\rho}$, $1\le p<\infty$.

\begin{thm}\label{thm:bounded-bf}
Let $1\le p\le\infty$, $\alpha,\beta>0$ and 
 $\rho,\eta\in\R$.

A Hankel bilinear form $\Lambda:E\times E\to \C$ satisfies 
 $|\Lambda(f,g)|
\lesssim \|f\|_{F^p_{\alpha,\rho}}\|g\|_{F^{p'}_{\beta,\eta}}$
 if and only if 
there exists 
$b\in F^{\infty}_{\frac{\alpha+\beta}4,-\rho-\eta}$
 such that 
$\Lambda(f,g)
=\langle fg,b\rangle_{\frac{\alpha+\beta}2}$.
In this case, we have $\|\Lambda\|\simeq 
\|b\|_{F^{\infty}_{\frac{\alpha+\beta}4,-\rho-\eta}}$, 
and there exists $\varphi\in L^\infty_{0,-\rho-\eta}$ 
such that the bounded bilinear form 
$\widetilde{\Lambda}
:L^p_{\alpha,\rho}\times L^{p'}_{\beta,\eta}\to\C$,
defined by $\widetilde\Lambda(f,g)
=\langle fg,\varphi\rangle_{\frac{\alpha+\beta}2},$ 
coincides with $\Lambda$ on $E\times E$ and satisfies 
$\|\widetilde\Lambda\|\simeq\|\Lambda\|$.
\end{thm}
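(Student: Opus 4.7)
My plan is to prove the two directions of the equivalence separately, using as the principal technical ingredients the weak decomposition of the reproducing kernel of $F^2_\gamma$ and the atomic decomposition of $F^1_{\gamma,\sigma}$ it yields, both developed elsewhere in the paper.

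\emph{Necessity.} Assuming $\Lambda$ is bounded, the Hankel property makes $T(fg):=\Lambda(f,g)$ a well-defined linear functional on the span of products $\{fg:f,g\in E\}$. A standard matching of exponents shows that, under the pairing $\langle\cdot,\cdot\rangle_{(\alpha+\beta)/2}$, the space $F^{\infty}_{(\alpha+\beta)/4,-\rho-\eta}$ is dual to $F^1_{3(\alpha+\beta)/4,\rho+\eta}$ (note $3(\alpha+\beta)/4+(\alpha+\beta)/4=\alpha+\beta$). I therefore aim to extend $T$ to a bounded functional on $F^1_{3(\alpha+\beta)/4,\rho+\eta}$ and invoke duality. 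The extension rests on a weak factorization: every $h\in F^1_{3(\alpha+\beta)/4,\rho+\eta}$ decomposes as $h=\sum_j f_jg_j$ with $f_j\in F^p_{\alpha,\rho}$, $g_j\in F^{p'}_{\beta,\eta}$ and $\sum_j\|f_j\|_{F^p_{\alpha,\rho}}\|g_j\|_{F^{p'}_{\beta,\eta}}\lesssim\|h\|$. This factorization is produced by combining the atomic decomposition of $F^1_{3(\alpha+\beta)/4,\rho+\eta}$ (in terms of normalized reproducing kernels $K_w$ at an admissible lattice) with the weak kernel splitting $K_w=k_w^{(1)}k_w^{(2)}$ in which $k_w^{(1)}\in F^p_{\alpha,\rho}$, $k_w^{(2)}\in F^{p'}_{\beta,\eta}$, and the product of their norms matches the atomic weight of $K_w$. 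Then $|T(h)|\le\|\Lambda\|\sum_j\|f_j\|\|g_j\|\lesssim\|\Lambda\|\|h\|$, and duality produces $b\in F^{\infty}_{(\alpha+\beta)/4,-\rho-\eta}$ with $\Lambda(f,g)=\langle fg,b\rangle_{(\alpha+\beta)/2}$ and $\|b\|\lesssim\|\Lambda\|$.

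\emph{Sufficiency and the bounded extension $\widetilde\Lambda$.} Given $b\in F^{\infty}_{(\alpha+\beta)/4,-\rho-\eta}$, the naive pointwise bound on $b$ leaves an unfavorable factor $e^{(\alpha+\beta)|z|^{2\ell}/8}$ in the pairing $\langle fg,b\rangle_{(\alpha+\beta)/2}$ that cannot be absorbed by H\"older alone. The remedy is to solve $P\varphi=b$ for some $\varphi\in L^\infty_{0,-\rho-\eta}$ with $\|\varphi\|\simeq\|b\|$, where $P$ is the Bergman-type projection from $L^\infty_{0,-\rho-\eta}$ onto $F^{\infty}_{(\alpha+\beta)/4,-\rho-\eta}$; this Coifman--Rochberg type surjectivity is again supplied by the weak kernel decomposition. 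Since $fg$ is holomorphic, $\langle fg,b\rangle_{(\alpha+\beta)/2}=\langle fg,P\varphi\rangle_{(\alpha+\beta)/2}=\langle fg,\varphi\rangle_{(\alpha+\beta)/2}$, and H\"older gives
\[
|\widetilde\Lambda(f,g)|=\bigl|\langle fg,\varphi\rangle_{(\alpha+\beta)/2}\bigr|\le\|\varphi\|_{L^\infty_{0,-\rho-\eta}}\|f\|_{L^p_{\alpha,\rho}}\|g\|_{L^{p'}_{\beta,\eta}}.
\]
This defines $\widetilde\Lambda$ on $L^p_{\alpha,\rho}\times L^{p'}_{\beta,\eta}$ with $\|\widetilde\Lambda\|\lesssim\|b\|$; combining with necessity gives $\|\Lambda\|\simeq\|b\|\simeq\|\widetilde\Lambda\|$.

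The main obstacle is the weak Bergman kernel decomposition when $\ell>1$. For $\ell=1$ the Fock kernel $e^{\gamma\langle z,w\rangle}$ factors trivially into any split of the parameter, and the classical Janson--Peetre--Rochberg argument applies without modification; for $\ell>1$ the reproducing kernel of $F^2_\gamma$ is of Mittag--Leffler type and admits no exact multiplicative factorization, so one must construct an approximate splitting that reproduces the correct pointwise size and Fock--Sobolev norms of the factors modulo controllable errors. Both directions of the theorem ultimately rest on this decomposition.
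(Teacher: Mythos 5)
Your sufficiency argument tracks the paper's closely: one solves $P_{\frac{\alpha+\beta}{2}}(\varphi)=b$ with $\varphi\in L^\infty_{0,-\rho-\eta}$ (Proposition \ref{prop:Ponto} with source exponent $0$ and $\gamma=\frac{\alpha+\beta}{2}$, so $\gamma^2/(2\gamma-0)=\frac{\alpha+\beta}{4}$), replaces $b$ by $\varphi$ inside the $\frac{\alpha+\beta}{2}$-pairing, and applies H\"older. One small caveat: the surjectivity of $P_{\frac{\alpha+\beta}{2}}\colon L^\infty_{0,-\rho-\eta}\to F^\infty_{\frac{\alpha+\beta}{4},-\rho-\eta}$ is an already-stated property of the Bergman projection, not a consequence of the weak kernel decomposition as you suggest.

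The necessity half contains a concrete error. You treat the $\gamma$-pairing duality as \emph{additive} in the exponential parameter (your note $3(\alpha+\beta)/4+(\alpha+\beta)/4=\alpha+\beta$ amounts to assuming $(F^1_{a,\sigma})'=F^\infty_{2\gamma-a,-\sigma}$), but Proposition \ref{prop:dualF} gives the \emph{multiplicative} rule $(F^1_{a,\sigma})'=F^\infty_{\gamma^2/a,-\sigma}$. To land in $F^\infty_{\frac{\alpha+\beta}{4},-\rho-\eta}$ with $\gamma=\frac{\alpha+\beta}{2}$ one needs $a=\gamma^2\big/\frac{\alpha+\beta}{4}=\alpha+\beta$; with your $a=\frac{3(\alpha+\beta)}{4}$ duality only yields $b\in F^\infty_{\frac{\alpha+\beta}{3},-\rho-\eta}$, a strictly larger space, so the conclusion is weaker than the theorem requires. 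Even after correcting $a$, your route needs the weak factorization $F^p_{\alpha,\rho}\odot F^{p'}_{\beta,\eta}=F^1_{\alpha+\beta,\rho+\eta}$ as an \emph{input}; you propose to derive it from an atomic decomposition into normalized reproducing kernels at a lattice, but no such atomic decomposition is developed in the paper, and the factorization is exactly Corollary \ref{cor:weakfact}, which is deduced \emph{from} the theorem. Using it here would be circular without a separate development. The paper instead runs a two-step bootstrap that needs no factorization a priori: (i) assuming WLOG $\alpha\ge\beta$, it uses the embedding $F^1_{\frac{\alpha+\beta}{2},-\tau}\hookrightarrow F^p_{\alpha,\rho}$ (Corollary \ref{cor:embedF}) and dualizes with $\gamma=a=\frac{\alpha+\beta}{2}$ to get a \emph{crude} representing symbol $b\in F^\infty_{\frac{\alpha+\beta}{2},\tau}$; (ii) it then tests $\Lambda$ on the factors $G_k(\cdot\,\overline{z}),H_k(\cdot\,\overline{z})$ of Theorem \ref{thm:decompK}, so that $|b(z)|=|\langle K_\gamma(\cdot,z),b\rangle_\gamma|\le\sum_k|\Lambda(G_k(\cdot\,\overline{z}),H_k(\cdot\,\overline{z}))|\lesssim\|\Lambda\|(1+|z|)^{\rho+\eta}e^{\frac{\gamma^2}{2(\alpha+\beta)}|z|^{2\ell}}$, which upgrades $b$ into $F^\infty_{\frac{\alpha+\beta}{4},-\rho-\eta}$. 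That decomposition-as-test-functions step is the role Theorem \ref{thm:decompK} actually plays here; the weak factorization is then an \emph{output} of the theorem, not an ingredient.
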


As a consequence,  we obtain a weak 
factorization of the space 
$F^1_{\alpha+\beta,\rho+\eta}$.
We recall that the weak product $F^{p}_{\alpha,\rho}\odot
F^{p'}_{\beta,\eta}$, $1\le p<\infty$, is the completion of the space of  finite sums 
$h=\sum_{j} f_j g_j$, $f_j\in F^{p}_{\alpha,\rho}$ and $g_j\in
F^{p'}_{\beta,\eta}$, using the norm
\[
\|h\|_{F^{p}_{\alpha,\rho}\odot F^{p'}_{\beta,\eta}}
:=\inf\left\{\sum_{j} 
\|f_j\|_{F^{p}_{\alpha,\rho}}\|g_j\|_{F^{p'}_{\beta,\eta}}: 
h=\sum_{j} f_jg_j\right\}.
\]
We then have:
\begin{cor}\label{cor:weakfact}
For $1\le p<\infty$,  $\alpha,\beta>0$ and 
 $\rho,\eta\in\R$, 
$F^{p}_{\alpha,\rho}\odot F^{p'}_{\beta,\eta}
=F^1_{\alpha+\beta,\rho+\eta}$.
Moreover, $F^{1}_{\alpha,\rho}\odot \mathfrak{f}^{\infty}_{\beta,\eta}
=F^{1}_{\alpha,\rho}\odot F^{\infty}_{\beta,\eta}
=F^1_{\alpha+\beta,\rho+\eta}$.
\end{cor}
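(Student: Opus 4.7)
My plan is to derive the corollary from Theorem~\ref{thm:bounded-bf} by a Banach duality argument. The forward containment $F^{p}_{\alpha,\rho}\odot F^{p'}_{\beta,\eta}\subseteq F^1_{\alpha+\beta,\rho+\eta}$ is immediate from H\"older's inequality: splitting the weight of $F^1_{\alpha+\beta,\rho+\eta}$ as
\[
(1+|z|)^{\rho+\eta}e^{-\frac{\alpha+\beta}{2}|z|^{2\ell}} = \bigl((1+|z|)^{\rho}e^{-\frac{\alpha}{2}|z|^{2\ell}}\bigr)\bigl((1+|z|)^{\eta}e^{-\frac{\beta}{2}|z|^{2\ell}}\bigr)
\]
yields $\|fg\|_{F^1_{\alpha+\beta,\rho+\eta}}\le \|f\|_{F^p_{\alpha,\rho}}\|g\|_{F^{p'}_{\beta,\eta}}$, giving a continuous inclusion with norm at most one.

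For the reverse inclusion I would set $W:=F^{p}_{\alpha,\rho}\odot F^{p'}_{\beta,\eta}$ and identify its dual with the bounded Hankel bilinear forms on $F^p_{\alpha,\rho}\times F^{p'}_{\beta,\eta}$ via $\lambda\leftrightarrow\Lambda$, $\Lambda(f,g):=\lambda(fg)$; the Hankel property ensures this identification is well defined on $W$, with comparable norms. Theorem~\ref{thm:bounded-bf} supplies, for each such $\Lambda$, a symbol $\varphi\in L^{\infty}_{0,-\rho-\eta}$ with $\|\varphi\|_{L^{\infty}_{0,-\rho-\eta}}\lesssim \|\Lambda\|$ satisfying $\Lambda(f,g)=\langle fg,\varphi\rangle_{(\alpha+\beta)/2}$ on $E\times E$, and by density of $E$ in both Fock--Sobolev factors this representation extends to all of $W$. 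Since $h\mapsto\langle h,\varphi\rangle_{(\alpha+\beta)/2}$ is bounded on $L^1_{\alpha+\beta,\rho+\eta}$ with norm exactly $\|\varphi\|_{L^{\infty}_{0,-\rho-\eta}}$, taking the supremum over $\Lambda$ of norm at most one gives
\[
\|h\|_{W}=\sup_{\|\Lambda\|\le 1}|\Lambda(h)|\ \lesssim\ \|h\|_{F^1_{\alpha+\beta,\rho+\eta}}\qquad (h\in W).
\]
Thus $W$ embeds into $F^1_{\alpha+\beta,\rho+\eta}$ as a closed subspace; as it contains every holomorphic polynomial (write $P=P\cdot 1$, using $1\in F^{p'}_{\beta,\eta}$), and polynomials are dense in $F^1_{\alpha+\beta,\rho+\eta}$, one concludes $W=F^1_{\alpha+\beta,\rho+\eta}$.

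The refinements with $\mathfrak{f}^{\infty}_{\beta,\eta}$ in place of $F^{\infty}_{\beta,\eta}$ follow from the same template taken with $p=1$, $p'=\infty$: since $E$ is dense in $\mathfrak{f}^{\infty}_{\beta,\eta}$ by definition, Theorem~\ref{thm:bounded-bf} applies verbatim to the bounded Hankel bilinear forms on $F^1_{\alpha,\rho}\times\mathfrak{f}^{\infty}_{\beta,\eta}$, and the chain
\[
F^1_{\alpha,\rho}\odot \mathfrak{f}^{\infty}_{\beta,\eta}\ \subseteq\ F^1_{\alpha,\rho}\odot F^{\infty}_{\beta,\eta}\ \subseteq\ F^1_{\alpha+\beta,\rho+\eta}
\]
collapses to equality once the outermost space is shown to embed back into the innermost. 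The single delicate point, and the main obstacle, is that the holomorphic symbol $b\in F^{\infty}_{(\alpha+\beta)/4,-\rho-\eta}$ from the first half of Theorem~\ref{thm:bounded-bf} grows too rapidly for the na\"\i ve pairing $\langle h,b\rangle_{(\alpha+\beta)/2}$ to converge absolutely on a generic $h\in F^1_{\alpha+\beta,\rho+\eta}$; the duality argument closes only thanks to the stronger $L^{\infty}$-representation in terms of $\varphi$, which is precisely what the second conclusion of that theorem provides.
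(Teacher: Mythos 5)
Your proposal is correct and follows essentially the same route as the paper: identify the dual of the weak product with the bounded Hankel bilinear forms, invoke Theorem~\ref{thm:bounded-bf}, and recover the $F^1_{\alpha+\beta,\rho+\eta}$-norm by duality, then conclude by density of polynomials. The only (harmless) cosmetic difference is that you test against the $L^\infty_{0,-\rho-\eta}$ symbol $\varphi$ and use plain $L^1$--$L^\infty$ duality, whereas the paper tests against the holomorphic symbol $b$ via the duality $(F^1_{\alpha+\beta,\rho+\eta})'=F^\infty_{\frac{\alpha+\beta}4,-\rho-\eta}$ of Proposition~\ref{prop:dualF} (working only with $h\in E$, which sidesteps the convergence issue you flag).
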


Usually, necessary conditions for the 
boundedness of a bilinear form $\Lambda$ are obtained 
by checking the boundedness on  adequate 
testing functions $f$ and $g$. 
This is particularly simple when 
$\ell=1$, $\alpha=\beta$, $\rho=\eta=0$ and  $p=2$  
(see \cite{janson-peetre-rochberg}). 
In this classical case,  we can  take as  test functions $f$ and $g$ 
the square root of the Bergman kernel, that is 
$f(w)=g(w)=\sqrt{\alpha^n/n!\,} \,
e^{\frac {\alpha}2 z\overline{w}}$.
 Here, for $z,w\in\C^n$, 
$z\overline{w}:=\sum_{j=1}^n z_j\overline{w}_j$.
Then 
$
|b(z)|=|\langle fg,b\rangle_\alpha|
\le \|\Lambda\|\|f\|_{F^2_\alpha}^2
=\|\Lambda\|\, e^{\frac{\alpha}4|z|^2},
$
which proves that $b\in F^{\infty}_{\alpha/2}$.
Observe that the norm estimates of the above test functions $f$ and 
$g$ are similar to the ones of the Bergman kernel. 
This is not the situation in the general setting. 
In fact, although there is a broad literature 
on pointwise and 
norm estimates of the Bergman kernel for generalized 
Fock spaces (see, for instance, \cite{Delin}, 
\cite{Lindholm},  \cite{marzo-ortega-cerda}, 
\cite{seip-youssfi}, \cite{dallara}
and the references therein),
 it is not at all clear how to derive adequate 
decompositions of the Bergman kernel from these 
estimates.

For $\ell>1$ the choice of the test functions is  more delicate because the Bergman kernel
$K_\alpha(z,w)=\overline{K_{\alpha,z}(w)}$ is given 
in terms of derivatives of the so called Mittag-Leffler 
functions, which have zeros on $\C$
(see, for instance, Lemma \ref{lem:Bergmankernel} 
below and \cite[Theorem 2.1.1]{popov-sedletskii}).  
Consequently, it is not clear how to get 
a strong decomposition as in the previous case.
Instead, using the asymptotic behaviour  
 of the Mittag-Leffler functions, we obtain a weak 
decomposition of the Bergman kernel with accurate 
pointwise and norm estimates of each factor.
This will be a key tool to prove  Theorem 
\ref{thm:bounded-bf}.

\begin{thm}\label{thm:decompK}
Let $1\le p\le\infty$ $\alpha,\,\beta,\gamma>0$ 
and let $\rho, \eta\in\R$. Then there exist  functions 
$G_k=G_{k,\gamma,\alpha,\beta},\, 
H_k=H_{k,\gamma,\alpha,\beta}\in E(\C)$, 
$k=0,\cdots,n$,  such that:
\begin{align}
\label{eqn:decompK1}
K_\gamma(w,z)
&=\sum_{k=0}^n 
G_{k}(w\overline{z})H_{k}(w\overline{z}).\\
\label{eqn:decompK2}
\|K_{\gamma,z}\|_{F^1_{\alpha+\beta,\rho+\eta}}
&\simeq \sum_{k=0}^n
 \|G_{k}(\cdot\,\overline{z})\|
_{F^p_{\alpha,\rho}}
\|H_{k}(\cdot\,\overline{z})\|
_{F^{p'}_{\beta,\eta}}
\simeq (1+|z|)^{\rho+\eta} 
e^{\frac{\gamma^2\,|z|^{2\ell}}{2(\alpha+\beta)}}.
\end{align}
\end{thm}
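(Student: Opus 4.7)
Plan. The approach is to start from an explicit series representation of the Bergman kernel $K_\gamma$ as a scalar function of $u := w\overline{z}$, and then exploit the Mittag-Leffler structure to extract a finite factorization of the required form.

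First I would write $K_\gamma(w,z) = \kappa_\gamma(w\overline{z})$ via the orthonormal-basis expansion in $\{z^\mu/\|z^\mu\|_{F^2_\gamma}\}_{\mu\in\N^n}$. A polar-coordinates computation expresses $\|z^\mu\|^2_{F^2_\gamma}$ in terms of $\Gamma((n+|\mu|)/\ell)$, and the multinomial identity yields
\[
\kappa_\gamma(u) = c_n \sum_{j\ge 0} \frac{(n+j-1)!}{j!\,\Gamma((n+j)/\ell)}\,\gamma^{(n+j)/\ell}\,u^j.
\]
Since $(n+j-1)!/j!$ is a polynomial in $j$ of degree $n-1$, writing it as a combination of falling factorials $j^{\underline{m}}$ and using $u^m (d/du)^m u^j = j^{\underline{m}}\, u^j$, I would identify $\kappa_\gamma(u)$ as a linear combination of $u^m E^{(m)}_{1/\ell,\,n/\ell}(\gamma^{1/\ell} u)$ for $0\le m\le n-1$, where $E_{1/\ell,\nu}$ denotes the two-parameter Mittag-Leffler function.

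The heart of the argument is a \emph{weak splitting} of Mittag-Leffler. From its Hankel-type integral representation, one obtains for each relevant $\nu$ an exact decomposition $E_{1/\ell,\nu}(v) = \ell\, v^{\ell(1-\nu)} e^{v^\ell} + R_\nu(v)$, where the first term is entire of order $\ell$ and finite type, and $R_\nu \in E(\C)$ has strictly smaller exponential type. Applying this to each summand of $\kappa_\gamma$ and collecting, one arrives at an exact identity $\kappa_\gamma(u) = \sum_{k=0}^{n} \Phi_k(u)\, e^{\gamma u^\ell}$ with $\Phi_k \in E(\C)$ of strictly smaller exponential type than $e^{\gamma u^\ell}$. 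Splitting $e^{\gamma u^\ell} = e^{s u^\ell} e^{(\gamma-s) u^\ell}$ with $s := \alpha\gamma/(\alpha+\beta)$, and partitioning each $\Phi_k = \Phi_k^{(1)} \Phi_k^{(2)}$ to balance the norms on the two Fock-Sobolev spaces, I set $G_k(u) := \Phi_k^{(1)}(u) e^{s u^\ell}$ and $H_k(u) := \Phi_k^{(2)}(u) e^{(\gamma-s) u^\ell}$, giving \eqref{eqn:decompK1} with $G_k, H_k \in E(\C)$.

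For the equivalence \eqref{eqn:decompK2}, I would compute $\|G_k(\cdot\,\overline{z})\|_{F^p_{\alpha,\rho}}$ and $\|H_k(\cdot\,\overline{z})\|_{F^{p'}_{\beta,\eta}}$ in polar coordinates, applying Laplace's method to the resulting one-dimensional integrals. The saddle-point analysis produces $\|G_k(\cdot\,\overline{z})\|_{F^p_{\alpha,\rho}} \simeq (1+|z|)^{a_k} e^{s^2|z|^{2\ell}/(2\alpha)}$ and analogously for $H_k$, with exponents summing to $\rho+\eta$; the product equals $(1+|z|)^{\rho+\eta} e^{\gamma^2 |z|^{2\ell}/(2(\alpha+\beta))}$ because $s = \alpha\gamma/(\alpha+\beta)$ is the unique minimizer of $s^2/\alpha + (\gamma-s)^2/\beta$, whose minimum value is $\gamma^2/(\alpha+\beta)$. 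The norm $\|K_{\gamma,z}\|_{F^1_{\alpha+\beta,\rho+\eta}}$ itself will follow directly from the Bergman-kernel norm estimates cited earlier in the paper. The main obstacle will be turning the sector-dependent asymptotic expansion of the Mittag-Leffler function into a clean global identity with exactly $n+1$ entire factors of order $\ell$ and finite type, valid in every direction of $\C$; once this structural identity is secured, the subsequent Fock-norm computations by Laplace asymptotics are routine, though care is needed to keep the polynomial prefactors $(1+|z|)^{\rho}$ correctly balanced between $G_k$ and $H_k$.
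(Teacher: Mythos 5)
Your overall strategy (weak scalar factorization of the kernel plus optimization of the split parameter) is the right shape, and your computation that $s=\alpha\gamma/(\alpha+\beta)$ minimizes $s^2/\alpha+(\gamma-s)^2/\beta$ with minimum $\gamma^2/(\alpha+\beta)$ correctly reproduces the exponent in \eqref{eqn:decompK2}. However, there is a genuine gap at the step you flag as ``the main obstacle,'' and it is not a detail you can postpone: for non-integer $\ell$, the function $v\mapsto v^{\ell(1-\nu)}e^{v^\ell}$ is \emph{not} entire (it is multi-valued on $\C$), so the claimed ``exact decomposition $E_{1/\ell,\nu}(v)=\ell\,v^{\ell(1-\nu)}e^{v^\ell}+R_\nu(v)$ with $R_\nu\in E(\C)$'' is false. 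Since $E_{1/\ell,\nu}$ is entire, the remainder $R_\nu$ cannot be entire either. Consequently your proposed factors $G_k(u)=\Phi_k^{(1)}(u)e^{su^\ell}$ and $H_k(u)=\Phi_k^{(2)}(u)e^{(\gamma-s)u^\ell}$ do not belong to $E(\C)$ unless $\ell\in\N$. The theorem is stated for all $\ell\ge 1$, so the argument breaks precisely in the generality it is supposed to cover. (The paper explicitly notes, in a remark at the end of Section~\ref{sec:proof12}, that the strong decomposition via $e^{\cdot\,u^\ell}$ only works for integer $\ell$.)

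The paper resolves exactly this obstruction by never letting $e^{\lambda^\ell}$ appear as a factor. The key identity \eqref{eqn:factorizationE} writes
\[
E_{1/\ell,1/\ell}(\lambda)=c_{\ell,\theta}\,
E_{\frac1\ell,\frac{\ell+1}{2\ell}}(\theta^{1/\ell}\lambda)\,
E_{\frac1\ell,\frac{\ell+1}{2\ell}}((1-\theta)^{1/\ell}\lambda)
+R_{\ell,\theta}(\lambda),
\]
where both factors and the remainder are \emph{Mittag-Leffler functions} (hence entire of order $\ell$, finite type), and the asymptotics \eqref{eqn:Eab} are used only to check that the remainder has the smaller exponential growth \eqref{eqn:factorizationR}. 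Differentiating this $(n-1)$ times via Leibniz then produces the $n+1$ entire factors $G_k,H_k$ (Corollary~\ref{cor:factorizationK}, Definition~\ref{defn:GHindecom}). If you want to carry your proposal through, you need to replace your $e^{su^\ell}$, $e^{(\gamma-s)u^\ell}$ with the entire surrogates $E_{\frac1\ell,\frac{\ell+1}{2\ell}}((s)^{1/\ell}\cdot)$, etc., and prove the product identity up to an entire remainder of lower type; this is the content of the paper's \eqref{eqn:factorizationE} and is not something that follows ``routinely'' from the sector-dependent asymptotics, which is why it is the crux of the proof rather than a loose end.
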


If $\ell=1$, then 
$K_\gamma(w,z)=\frac{\gamma^n}{n!} 
e^{\gamma z\overline w}$ 
and in this case \eqref{eqn:decompK1}
reduces to 
\begin{equation}\label{eqn:kn1}
K_\gamma(w,z)=
\tfrac{\gamma^n}{n!} 
e^{\frac{\alpha\gamma}{\alpha+\beta} z\overline w}
\cdot e^{\frac{\beta\gamma}{\alpha+\beta} z\overline w}
=\,\tfrac {n!}{\gamma^n}
K_\gamma\left(w,\tfrac{\alpha}{\alpha+\beta}z\right)
K_\gamma\left(w,\tfrac{\beta}{\alpha+\beta}z\right).
\end{equation}

For  $\ell>1$, the explicit expression of the 
functions $G_k$ and $H_k$ is quite involved. 
A motivated definition of these factors as well as their  pointwise and norm estimates are given in Section \ref{sec:proof12} (see 
Definition \ref{defn:GHindecom} and 
Theorem \ref{thm:decompK2} below). 
In order to prove the norm estimates of the  
functions $G_k$ and $H_k$, we use, among other 
ingredients, the following  Littlewood-Paley type formula, 
which may be of independent interest by itself.

\begin{thm}\label{thm:LP}
Let  $1\le p\le\infty$, $\alpha>0$ and $\rho\in\R$.  
For an entire function $f$ in $\C^n$, let 
$|\nabla^m\,f|=\sum_{|\nu|=m}|\partial^\nu f|$, where
$|\nu|=\nu_1+\cdots+\nu_n$.

Then the following assertions are equivalent:
\begin{enumerate}
\item $f\in F^p_{\alpha,\rho}$.
\item For any $k\ge 1$, 
$|\nabla^k f|\in L^p_{\alpha,\rho-k(2\ell-1)}$.
\item For some $k\ge 1$, 
$|\nabla^k f|\in L^p_{\alpha,\rho-k(2\ell-1)}$.
\end{enumerate}
Moreover, we have
\begin{equation*}
\|f\|_{F^p_{\alpha,\rho}}
\simeq \sum_{m=0}^{k-1}|\nabla^m f(0)|
+\|\nabla^k f\|_{L^p_{\alpha,\rho-k(2\ell-1)}}.
\end{equation*}
\end{thm}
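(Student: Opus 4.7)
The plan is to reduce to the case $k=1$ by induction and then handle that base case directly. Once $\|f\|_{F^p_{\alpha,\rho}}\simeq|f(0)|+\||\nabla f|\|_{L^p_{\alpha,\rho-(2\ell-1)}}$ is proved for every entire $f$, one obtains the general statement by iterating: apply the base case to each partial derivative $\partial^\nu f$ with $|\nu|\le k-1$ (each of which is entire) with $\rho$ shifted to $\rho-|\nu|(2\ell-1)$, and sum up the resulting comparabilities. The two non-trivial equivalences then are (i)$\Rightarrow$(ii) and (iii)$\Rightarrow$(i), with (ii)$\Rightarrow$(iii) being vacuous.

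For the direction (i)$\Rightarrow$(ii) with $k=1$, I would use Cauchy's estimate on Bergman balls $B(z,r(z))$ of radius $r(z)=c(1+|z|)^{-(2\ell-1)}$. This scale is dictated by the elementary inequality $\bigl||z|^{2\ell}-|w|^{2\ell}\bigr|\lesssim|z|^{2\ell-1}|z-w|$, which guarantees that the weight $(1+|w|)^\rho e^{-\alpha|w|^{2\ell}/2}$ stays comparable to its value at $z$ throughout $B(z,r(z))$. Cauchy's inequality gives $|\nabla f(z)|\lesssim(1+|z|)^{2\ell-1}\sup_{B(z,r(z))}|f|$, and combining this with the sub-mean-value inequality for $|f|^p$ on $B(z,2r(z))$ and applying Fubini yields $\||\nabla f|\|_{L^p_{\alpha,\rho-(2\ell-1)}}\lesssim\|f\|_{F^p_{\alpha,\rho}}$; the case $p=\infty$ is analogous without the averaging step.

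The reverse direction (iii)$\Rightarrow$(i) is more delicate and rests on the representation $f(z)-f(0)=\int_0^1\sum_j z_j\,(\partial_j f)(tz)\,dt$. For $|z|\le 1$ the bound follows from the sub-mean-value property, since $|f(z)-f(0)|\lesssim|z|\sup_{|w|\le 1}|\nabla f(w)|$ is controlled by the local $L^p$ norm of $|\nabla f|$. For $|z|>1$, a direct Hölder/Minkowski application to the integral representation loses an unwanted power of $(1+|w|)$ because it distributes the gradient uniformly along the whole radial segment. To obtain the sharp bound I would pass to polar coordinates and invoke a weighted one-dimensional Hardy inequality in the radial variable, with weights $u(r)=r^{2n-1}(1+r)^{\rho p}e^{-\alpha p r^{2\ell}/2}$ and $v(s)=s^{2n-1}(1+s)^{(\rho-(2\ell-1))p}e^{-\alpha p s^{2\ell}/2}$. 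The Muckenhoupt--Bradley balance condition is verified via Laplace's method: the exponential parts of $(\int_r^\infty u)^{1/p}$ and $(\int_0^r v^{1-p'})^{1/p'}$ cancel exactly (to $e^0=1$), and the polynomial contributions in $r$ cancel out to $r^0$, leaving a bounded supremum. The endpoint $p=1$ needs the simpler one-sided form of the Hardy condition, which reduces to the same Laplace computation.

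The main obstacle is precisely the sharpness of the reverse direction for $1\le p<\infty$. The gap of $(1+|z|)^{-(2\ell-1)}$ per derivative is extremely tight: the Muckenhoupt condition balances exactly in both its exponential and polynomial contributions, and any naive integration argument loses by a fixed power of $(1+|z|)$. An alternative route, using the Bergman reproducing formula with integration by parts to express $f$ as an integral operator acting on $\nabla f$ and then applying Schur-type tests with the kernel estimates developed later in the paper, would also work, though it requires equally careful weight tracking and the same cancellation between exponentials.
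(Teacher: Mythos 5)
Your forward direction, (i)$\Rightarrow$(ii) via Cauchy estimates on balls $B(z,c(1+|z|)^{1-2\ell})$ on which the Fock weight is comparable to its value at the center, combined with the sub-mean-value property of $|f|^p$ and Fubini, is a correct and genuinely more elementary route than the paper's. The paper instead differentiates the reproducing formula $f=P_\alpha f$, estimates $\partial_{z_j}K_\alpha(z,w)$ via the Mittag-Leffler asymptotics, applies Minkowski's inequality and the weighted kernel-norm computation of Lemma \ref{lem:Phi}, and interpolates between $p=1$ and $p=\infty$.

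The reverse direction, however, has a genuine gap in the Muckenhoupt verification. With the polar Jacobian included, your weight is $v(s)=s^{2n-1}(1+s)^{(\rho-(2\ell-1))p}e^{-\alpha p s^{2\ell}/2}$, so $v^{1-p'}(s)\simeq s^{-(2n-1)/(p-1)}$ near $s=0$, and $\int_0^r v^{1-p'}\,ds=\infty$ whenever $(2n-1)/(p-1)\ge 1$, i.e.\ for $1<p\le 2n$; for $p=1$ the one-sided condition $\sup_s v(s)^{-1}\int_s^\infty u$ likewise blows up as $s\to 0^+$ because $v(s)\simeq s^{2n-1}$ while $\int_s^\infty u$ stays bounded away from zero. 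So the Hardy inequality you invoke simply fails in the range $1\le p\le 2n$, which always contains $p=1,2$. The cancellations you compute by Laplace's method are correct at $r\to\infty$, but the Muckenhoupt condition also has to be checked at $r\to 0^+$, and that is where it breaks. Splitting on $|z|\le 1$ versus $|z|>1$ does not help, because for $|z|>1$ the radial integral $\int_0^{|z|}|\nabla f(s\zeta)|\,ds$ still starts at $s=0$ and the divergence of $\int_0^{r}v^{1-p'}$ persists. The fix is to split the radial variable (not $|z|$) at $s=1$: the tail $\int_1^r|\nabla f(s\zeta)|\,ds$ obeys the Hardy inequality on $[1,\infty)$ with a finite Muckenhoupt constant (your Laplace computation is correct there), while the piece $\int_0^1|\nabla f(s\zeta)|\,ds\le\sup_{|w|\le 1}|\nabla f|$ must be controlled by subharmonicity of $|\nabla f|^p$ and then absorbed into the right-hand norm. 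This extra step is not present in your sketch; your sub-mean-value remark addresses only the contribution of small $|z|$, not the contribution of small $s$ inside the radial integral for large $|z|$. The paper sidesteps all of this by writing the operator $S_j(g)(z)=z_j\int_0^1 g(tz)\,dt$ via $g=P_\alpha g$ and the Bergman kernel, reducing everything to the elementary pointwise bound $\int_0^1(1+ta)^\tau e^{(ta)^{2\ell}}\,dt\lesssim(1+a)^{\tau-2\ell}e^{a^{2\ell}}$, which has no boundary issue at $t=0$ because the Jacobian never enters.
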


We point out that, in the particular case $\ell=1$, a fractional derivative  version of the Littlewood-Paley formula  is given in \cite{Cho-Choe-Koo}
(see also the references therein). 

Finally, as an application of Theorems  
\ref{thm:bounded-bf}  and \ref{thm:decompK}, 
we obtain a characterization of the boundedness, 
compactness and membership in the Schatten class 
of the small Hankel operators. 

\begin{thm}\label{thm:hankel} 
Let  $1\le p\le\infty$, $\alpha>0$ and $\rho\in\R$. For 
$\beta\in(\alpha,2\alpha)$ and $b\in F^\infty_\beta$, let
$\mathfrak{h}_{b,\alpha}$ be the small Hankel operator 
defined by $\mathfrak{h}_{b,\alpha}(f)
:=\overline{P_\alpha(\overline f\,b)}$, $f\in E$, where $P_\alpha$ is the Bergman projection on $F^2_{\alpha}$ (see Section \ref{sec:Bergman}). Then:
\begin{enumerate}
\item \label{item:hankelbound}
$\mathfrak{h}_{b,\alpha}$ extends to a bounded 
(compact) operator from $F^p_{\alpha,\rho}$ to 
$\overline{F^p_{\alpha,\rho}}$ if and only if $b\in 
F^\infty_{\frac{\alpha}2}$
 (respectively, $b\in \mathfrak{f}^\infty_{\frac{\alpha}2}$).
Moreover,  
$\|\mathfrak{h}_{b,\alpha}\|_{F^{p}_{\alpha,\rho}}
\simeq \|b\|_{F^\infty_{\frac{\alpha}2}}$.
\item \label{item:hankelschat}
$\mathfrak{h}_{b,\alpha}$ belongs to the Schatten class  
$S_p(F^2_{\alpha,\rho}, \overline{F^2_{\alpha,\rho}})$ if 
and only if $b\in F^p_{\frac{\alpha}2,\frac{2n(\ell-1)}p}$.
Moreover,
$\|\mathfrak{h}_{b,\alpha}\|_{S_p(F^2_{\alpha,\rho}, 
\overline{F^2_{\alpha,\rho}})}
\simeq \|b\|_{F^p_{\frac{\alpha}2,\frac{2n(\ell-1)}p}}$.
\end{enumerate}
\end{thm}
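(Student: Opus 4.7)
The plan is to reduce $\mathfrak{h}_{b,\alpha}$ to a Hankel bilinear form and then deploy Theorem \ref{thm:bounded-bf} for part (i), while part (ii) uses the weak kernel decomposition from Theorem \ref{thm:decompK} together with atomic/frame decompositions on a Bergman-type lattice. The key identity is
\[
\langle \mathfrak{h}_{b,\alpha}(f),\overline{g}\rangle_\alpha=\langle fg,b\rangle_\alpha,\qquad f,g\in E,
\]
which I would verify by unfolding the definition of $\mathfrak{h}_{b,\alpha}$ and applying the reproducing property of $P_\alpha$. The self-duality $(\overline{F^p_{\alpha,\rho}})^*\cong F^{p'}_{\alpha,-\rho}$ coming from the conjugate of the $\alpha$-pairing then yields
\[
\|\mathfrak{h}_{b,\alpha}\|_{F^p_{\alpha,\rho}\to\overline{F^p_{\alpha,\rho}}}\simeq\|\Lambda_b\|,\qquad \Lambda_b(f,g):=\langle fg,b\rangle_\alpha,
\]
on the product $F^p_{\alpha,\rho}\times F^{p'}_{\alpha,-\rho}$.

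For part (i), applying Theorem \ref{thm:bounded-bf} with $\beta=\alpha$ and $\eta=-\rho$ produces, from boundedness of $\Lambda_b$, some $b'\in F^\infty_{\alpha/2,0}=F^\infty_{\alpha/2}$ with $\Lambda_b=\Lambda_{b'}$ on $E\times E$ and $\|\Lambda_b\|\simeq\|b'\|_{F^\infty_{\alpha/2}}$. Since $\mathfrak{h}_{b,\alpha}$ depends on $b$ only through $\Lambda_b$, one may replace $b$ by $b'$ without altering the operator, which settles boundedness and the norm equivalence. For compactness I would first observe that $\mathfrak{h}_{q,\alpha}$ has finite rank when $q$ is a holomorphic polynomial, so that approximating $b\in\mathfrak{f}^\infty_{\alpha/2}$ by polynomials in the $F^\infty_{\alpha/2}$-norm and invoking the boundedness estimate above yields norm-approximation of $\mathfrak{h}_{b,\alpha}$ by finite-rank operators. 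The converse comes from testing on normalized reproducing kernels $k_z$ of $F^2_\alpha$: these are weakly null at infinity, so compactness forces $\|\mathfrak{h}_{b,\alpha}(k_z)\|\to 0$, which via the pointwise estimates of Theorem \ref{thm:decompK} translates into $|b(z)|e^{-\frac{\alpha}{4}|z|^{2\ell}}\to 0$, i.e.\ $b\in\mathfrak{f}^\infty_{\alpha/2}$.

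For part (ii), the decomposition $K_\alpha(w,z)=\sum_{k=0}^n G_k(w\overline z)H_k(w\overline z)$ realizes $\mathfrak{h}_{b,\alpha}$ as a finite sum of building blocks whose singular values I would track through atomic/frame decompositions of $F^2_{\alpha,\rho}$ adapted to a lattice $\{z_j\}$ in the Bergman-type metric with radii comparable to $(1+|z|)^{-(\ell-1)}$. The Euclidean volume of a Bergman ball at $z$ is then $\simeq(1+|z|)^{-2n(\ell-1)}$, and this is precisely what produces the weight $(1+|z|)^{2n(\ell-1)/p}$ in the Fock-Sobolev norm of $b$. For the upper bound I would expand $\mathfrak{h}_{b,\alpha}$ into rank-one operators indexed by $\{z_j\}$, use an $S_p$ triangle inequality to bound by an $\ell^p$-sum of samples of $b$ at the lattice, and identify this sum with $\|b\|_{F^p_{\alpha/2,2n(\ell-1)/p}}^p$ via subharmonic mean-value estimates on Bergman balls, with Theorem \ref{thm:LP} supplying the control on $\nabla b$ needed to pass between pointwise values and averages. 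For the lower bound I would pair $\mathfrak{h}_{b,\alpha}$ against atomic test operators built from the weak factorization of $F^1$ given by Corollary \ref{cor:weakfact}, using Theorem \ref{thm:decompK} to ensure the resulting rank-one probes match the expected singular-value size.

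The main obstacle will be the Schatten lower bound: one has to promote $S_p$-norm information into a genuine $p$-th power integral of $|b|$ against the non-translation-invariant polynomial weight. Combining Theorem \ref{thm:decompK} (to split $K_\alpha$ into factors with sharp norms in each Fock-Sobolev space) with Corollary \ref{cor:weakfact} (to construct product test functions in $F^1$ that isolate individual lattice points) should deliver this, but keeping the exponent on $(1+|z|)$ exactly $2n(\ell-1)/p$---it arises solely from the Bergman-ball volume factor---will require careful bookkeeping throughout.
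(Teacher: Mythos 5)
Your reduction of $\mathfrak{h}_{b,\alpha}$ to the Hankel bilinear form $\Lambda_b(f,g)=\langle fg,b\rangle_\alpha$ via the identity $\langle\mathfrak{h}_{b,\alpha}(f),\overline g\rangle_\alpha=\langle fg,b\rangle_\alpha$ and duality is a sound alternative to the paper's direct route (which dilates to $\alpha=1$, represents $b=P_{1/2}\varphi$ with $\varphi\in L^\infty$, and interpolates), and your sufficient direction for compactness by polynomial approximation matches the paper. However, the compactness converse in your part (i) has a real gap for $\ell>1$: testing on the normalized reproducing kernel $k_z$ of $F^2_\alpha$ does not recover $b(z)$. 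For $\ell=1$ this works only because $K_\alpha(\cdot,z)^2$ is a constant multiple of $K_\alpha(\cdot,2z)$, so $\langle\mathfrak{h}_b(k_z),\overline{k_z}\rangle$ literally produces $b(2z)$; for $\ell>1$ no such strong factorization of the Bergman kernel exists, which is exactly why the paper is forced to build the weak decomposition of Theorem~\ref{thm:decompK}. The correct argument applies $\mathfrak{h}_b$ to the normalized factors $\widetilde G_{k,z}$, pairs each against $\widetilde H_{k,z}$, and uses the representation $e^{-|z|^{2\ell}/4}\,\overline{b(z)}=\sum_{k=0}^{n}\langle\widetilde H_{k,z},\overline{\mathfrak{h}_b(\widetilde G_{k,z})}\rangle$ together with weak nullity of the $\widetilde G_{k,z}$, not of $k_z$. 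As written, your step would fail.

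For part (ii) your route is genuinely different and, as you acknowledge yourself, incomplete at the crucial point. It rests on an atomic/frame decomposition of $F^2_{\alpha,\rho}$ over a Bergman lattice with radii $\simeq(1+|z|)^{-(\ell-1)}$, which is not established anywhere in the paper and is substantial groundwork in its own right; the "$S_p$ triangle inequality" you invoke is a $p$-triangle inequality only for $p\le1$, so the range $1<p<\infty$ would need a separate interpolation argument; and the lower-bound plan of "atomic test operators built from Corollary~\ref{cor:weakfact}" is too vague to pin down the exact weight $(1+|z|)^{2n(\ell-1)/p}$. The paper avoids lattices entirely: for the sufficient condition it writes $\mathfrak{h}_b$ as a Bochner integral in $S_1$ of the rank-one operators $\mathfrak{h}_{K_{1/2}(\cdot,w)}$ against $\overline{b(w)}e^{-|w|^{2\ell}/2}\,dV(w)$, estimates the $S_1$-norm of the integrand via Lemma~\ref{lem:rankoneS} and Proposition~\ref{prop:pqnormBergman}, and interpolates to $S_\infty$; for the necessary condition it introduces $\Phi_T(z)=\sum_k\langle H_k(\cdot\,\overline z),\overline{T(G_k(\cdot\,\overline z))}\rangle$, observes $\Phi_{\mathfrak{h}_b}=\overline b$, and shows $T\mapsto\Phi_T$ maps $S_p$ boundedly into $L^p_{1/2,\,2n(\ell-1)/p}$ by handling rank-one $T$ at $p=1$ and interpolating. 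Without that device, or a fully developed lattice sampling theory for these weighted Fock--Sobolev spaces, your Schatten lower bound remains unproved.
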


Unlike the case of small Hankel operators, 
there is a broad bibliography on the characterizations of  
boundedness, compactness and 
membership in the Schatten  class for Toeplitz operators   
on large families of weighted Fock 
spaces  (see, for instance, 
\cite{schuster-varolin}, \cite{hu-lv1},
\cite{isralowitz-virtanen-wolf}, 
\cite{oliver-pascuas}, \cite{hu-lv2} 
and the references therein). 
As far as we know, the literature on small Hankel 
operators is essentially concentrated around the case 
$\ell=1$. For instance, in the recent paper \cite{Wang-Hu} 
the authors characterize the boundedness and 
compactness of  small Hankel operators from 
$F^{p,1}_\alpha $ to $F^{q,1}_\alpha$, $0<p, q<\infty$ 
and $\alpha>0$.
Finally, we remark that for
$n=1$, $\varrho=0$ and $\ell$ is a 
positive integer, the boundedness, compactness and  
membership in the Hilbert-Schmidt class 
of the small Hankel operator were 
studied  in an unpublished manuscript  
written in collaboration with Jos\'e A. Pel\'aez 
\cite{Ca-Fa-Pa-Pe}.

The paper is organized as follows: 
In Section \ref{sec:Bergman} we state the main 
properties of the Fock-Sobolev spaces 
$F^{p}_{\alpha,\rho}$ 
and  the Bergman projection $P_\alpha$. 
The Littlewood-Paley formula  of 
Theorem \ref{thm:LP} and 
the weak factorization  of Theorem  
\ref{thm:decompK} will be  proved in 
Sections  \ref{sec:LP} and \ref{sec:proof12}, 
respectively. 
Section \ref{sec:proof11}
is devoted to the proof of Theorem \ref{thm:bounded-bf} and Corollary \ref{cor:weakfact}. 
Finally, in Section \ref{sec:proof13} we show  
Theorem \ref{thm:hankel}.

Throughout this paper the notation $\Phi\lesssim \Psi$ 
means that there exists a constant $C>0$, 
which does not depend on the involved variables,
 such that $\Phi\le C\, \Psi$. We write $\Phi\simeq \Psi$
 if $\Phi\lesssim \Psi$ and $\Psi\lesssim \Phi$.

\section{The Bergman projection on
 $L^{p}_{\alpha,\rho}$}\label{sec:Bergman}

In this section we state some well-known  properties of 
the Bergman projection and  the Fock-Sobolev spaces.

\subsection{On the two parametric Mittag-Leffler
 functions $E_{a,b}$}\quad\par

 The two parametric  Mittag-Leffler  functions are the
 entire functions on $\C$ given by 
\[
E_{a,b}(\lambda)
:=\sum_{k=0}^\infty \frac{\lambda^k}{\Gamma(a k+b)}
\qquad(\lambda\in\C,\,\, a,b>0).
\]
Observe that  $E_{1,1}(\lambda)$ is just the exponential 
function $e^\lambda$.

A good general reference for the  Mittag-Leffler
 functions is the  
book~{\cite{gorenflo-kilbas-mainardi-rogosin}}.

In this section we recall the asymptotic expansions of 
the two parametric Mittag-Leffler functions and their
 derivatives.
Those expansions will be useful to obtain both pointwise
 and norm estimates of the Bergman kernel.

\begin{thm}[{\cite[Theorem 1.2.1]{popov-sedletskii}}]
	Let $a\in (0,1]$ and let $b>0$. 
Then, for $|\lambda|\to\infty$, we have
\begin{equation}\label{eqn:Eab}
E_{a,b}(\lambda)=
\begin{cases}	
\frac 1a \lambda^{(1-b)/a}e^{\lambda^{1/a}}
+O(\lambda^{-1}),
	 & \quad\text{if}\quad|\arg\lambda|\le \frac{7\pi}8\,a,\\
O(\lambda^{-1}),
	&  \quad\text{if}\quad|\arg\lambda|\ge \frac{5\pi}8\,a.
	\end{cases}
	\end{equation}
\end{thm}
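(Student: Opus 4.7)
The plan is to exploit the Hankel contour representation
\[
\frac{1}{\Gamma(s)} = \frac{1}{2\pi i}\int_{\mathcal{H}} e^{t}\,t^{-s}\,dt,
\]
where $\mathcal{H}$ starts at $-\infty$ below the real axis, circles the origin once counter-clockwise, and returns to $-\infty$ above. Substituting this into the defining series of $E_{a,b}(\lambda)$ and interchanging sum with integral (legal provided $|t|>|\lambda|^{1/a}$ on the outer portion of $\mathcal{H}$, which can always be arranged) collapses the geometric series to
\[
E_{a,b}(\lambda) = \frac{1}{2\pi i}\int_{\mathcal{H}} \frac{e^{t}\,t^{a-b}}{t^{a}-\lambda}\,dt.
\]

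I would then rescale by $t = \lambda^{1/a}u$, taking $\arg\lambda^{1/a}=\arg\lambda/a$, to obtain
\[
E_{a,b}(\lambda) = \frac{\lambda^{(1-b)/a}}{2\pi i}\int_{\mathcal{H}'} \frac{e^{\lambda^{1/a}u}\,u^{a-b}}{u^{a}-1}\,du,
\]
whose integrand has a simple pole at $u=1$ with residue $\frac{1}{a}e^{\lambda^{1/a}}$. In the first regime $|\arg\lambda|\le \frac{7\pi}{8}a$, I would deform $\mathcal{H}'$ across this pole into a new Hankel-type contour $\mathcal{H}''$ whose outgoing rays lie in a sector where $\Re(\lambda^{1/a}u)\to-\infty$. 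The residue theorem extracts the main term $\frac{1}{a}\lambda^{(1-b)/a}e^{\lambda^{1/a}}$, while the remaining integral along $\mathcal{H}''$ has exponentially decaying integrand on each outgoing ray.

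The $O(\lambda^{-1})$ remainder is then obtained by integration by parts: writing $e^{\lambda^{1/a}u}=\lambda^{-1/a}\tfrac{d}{du}e^{\lambda^{1/a}u}$ and transferring the derivative onto $u^{a-b}/(u^{a}-1)$ gains one factor of $\lambda^{-1/a}$; iterating a suitable number of times, or combining the factor $1/(u^{a}-1)$ with a single integration by parts, yields the stated $O(\lambda^{-1})$ bound. In the second regime $|\arg\lambda|\ge \frac{5\pi}{8}a$ no admissible deformation crosses $u=1$; the exponential $e^{\lambda^{1/a}u}$ is already decaying in modulus on the unshifted contour (since $\Re\lambda^{1/a}<0$), and the same integration-by-parts argument delivers $O(\lambda^{-1})$ directly, without any main term.

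The main obstacle I expect is tracking the admissible range of $\arg\lambda$ during the deformation: the rays of $\mathcal{H}''$ must lie in the half-plane where $\Re(\lambda^{1/a}u)\to-\infty$, equivalently within $\pi/2$ of $-\arg\lambda^{1/a}$. Since $a\le 1$ the target direction $\arg\lambda^{1/a}$ can approach $\pi$ while the formula still contributes a nontrivial term, so the contour rays must be chosen uniformly to keep the integration-by-parts estimates clean on both branches of $\mathcal{H}''$. The concrete numerical constants $\frac{7\pi}{8}a$ and $\frac{5\pi}{8}a$, whose overlap is genuine, emerge from fixing definite contour angles and absorbing the constraint $a\le 1$; any sharpening of these constants is secondary to the essential asymptotic.
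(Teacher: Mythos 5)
The paper does not prove this statement; it is quoted verbatim from Popov--Sedletski\u{\i} \cite{popov-sedletskii} and used as a black box, so there is no ``paper's own proof'' to compare against. Your overall strategy --- inserting the Hankel-contour representation of $1/\Gamma(s)$ into the series, summing a geometric series to get
$E_{a,b}(\lambda)=\frac{1}{2\pi i}\int_{\mathcal H}\frac{e^{t}\,t^{a-b}}{t^{a}-\lambda}\,dt$
on a contour of radius $>|\lambda|^{1/a}$, then shrinking the contour and extracting the residue at $t=\lambda^{1/a}$ --- is indeed the classical route, and the sector constants $\tfrac{7\pi}{8}a<\pi a$ (pole safely inside the shrink region) and $\tfrac{5\pi}{8}a>\tfrac{\pi}{2}a$ (main term exponentially small) are consistent with the residue analysis.

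The gap is in your treatment of the remainder. After rescaling $t=\lambda^{1/a}u$ the denominator $u^{a}-1$ no longer carries any $\lambda$, while the prefactor $\lambda^{(1-b)/a}$ does not decay; one integration by parts then produces a factor $\lambda^{-1/a}$ but \emph{also} forces the contour radius in $u$ to shrink like $|\lambda|^{-1/a}$ for the exponential to stay bounded, which reintroduces positive powers of $\lambda$ through $u^{a-b}$ and $du$. So the ``gains one factor of $\lambda^{-1/a}$, hence $O(\lambda^{-1})$'' step is not actually available, and ``combining $1/(u^{a}-1)$ with a single integration by parts'' does not produce a $\lambda$-power at all. The fix is simpler than what you propose: after extracting the residue, return to the unscaled variable $t$ and keep $\mathcal H$ as a Hankel contour of \emph{fixed} small radius $\varepsilon$. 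There $|t^{a}|\le\varepsilon^{a}$ is bounded, hence $|t^{a}-\lambda|\ge|\lambda|/2$ for $|\lambda|$ large, so
$\bigl|\tfrac{1}{2\pi i}\int_{\mathcal H}\tfrac{e^{t}t^{a-b}}{t^{a}-\lambda}\,dt\bigr|
\le\tfrac{1}{\pi|\lambda|}\int_{\mathcal H}|e^{t}|\,|t|^{a-b}\,|dt|=O(\lambda^{-1})$,
uniformly in the stated sectors; expanding $(t^{a}-\lambda)^{-1}$ as a geometric series on the same contour even gives the full asymptotic series with coefficients $-1/\Gamma(b-ak)$. Replace the integration-by-parts paragraph with this direct estimate and the argument is complete.
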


Here, for $\lambda\in\C\setminus\{0\}$, 
 $\arg\lambda$ denotes  the principal branch of the 
 argument of $\lambda$,  that is, 
 $-\pi<\arg\lambda\le\pi$. 
Moreover, for $\beta\in\R$,
 $\lambda^\beta=|\lambda|^\beta
e^{i\beta\arg\lambda}$.

By Cauchy's formula (see \cite[Theorem 1.4.2]{olver}) 
we can differentiate the asymptotic expansion  
\eqref{eqn:Eab} on smaller sectors to obtain:

\begin{cor}
	Let $a\in (0,1]$, $b>0$ and  $m\in\N$. 
Then, for $|\lambda|\to\infty$, we have that
\[
E^{(m)}_{a,b}(\lambda)=
\begin{cases}	
\frac{\lambda^{(m(1-a)+1-b)/a}}{a^{m+1}}
e^{\lambda^{1/a}}
(1+O(\lambda^{-1/a}))+O(\lambda^{-m-1}),
 & \text{if}\,\,|\arg \lambda|\le \frac{3\pi}{4}a,\\
O(\lambda^{-m-1}),
 & \text{if}\,\,|\arg \lambda|\ge \frac{3\pi}{4}a.
\end{cases}
\] 
\end{cor}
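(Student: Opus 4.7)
The plan is to split $E_{a,b}$ into a main part and a remainder according to the preceding theorem, handle the main part by direct differentiation, and handle the remainder by Cauchy's integral formula. Precisely, on the sector $S_1 := \{\lambda : |\arg\lambda| \le \tfrac{7\pi}{8}a\}$ write
\[
E_{a,b}(\lambda) = F(\lambda) + R(\lambda),\qquad
F(\lambda) := \tfrac{1}{a}\lambda^{(1-b)/a}e^{\lambda^{1/a}},\qquad R(\lambda)=O(\lambda^{-1}),
\]
and on the sector $S_2 := \{\lambda : |\arg\lambda| \ge \tfrac{5\pi}{8}a\}$ we just have $E_{a,b}(\lambda)=O(\lambda^{-1})$.

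For the contribution of the explicit part $F$, I would compute $F^{(m)}$ using Leibniz's rule applied to the product $\phi(\lambda)\psi(\lambda)$, with $\phi(\lambda) = \lambda^{(1-b)/a}$ and $\psi(\lambda) = e^{\lambda^{1/a}}$. Writing $\psi^{(k)}(\lambda) = Q_k(\lambda)\psi(\lambda)$, the recursion $Q_{k+1} = Q_k' + \tfrac{1}{a}\lambda^{1/a-1}Q_k$ shows by induction that $Q_k(\lambda) = \tfrac{1}{a^k}\lambda^{k(1/a-1)}(1+O(\lambda^{-1/a}))$ as $|\lambda|\to\infty$ (the leading term comes from iterating the chain rule on the exponential, and each lower-order term picks up an extra factor of $\lambda^{-1/a}$). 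Plugging $k=m$ into the $j=0$ term of the Leibniz expansion of $\phi\psi$ yields the main term
\[
\tfrac{1}{a^{m+1}}\lambda^{(m(1-a)+1-b)/a}\, e^{\lambda^{1/a}},
\]
while the terms with $j\ge 1$ (involving $\phi^{(j)}\psi^{(m-j)}$) lose a factor $\lambda^{-j/a}$ relative to this, so they contribute only to the $O(\lambda^{-1/a})$ correction.

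For the remainder $R$ (and, symmetrically, for $E_{a,b}$ itself on the "decay" sector), I would invoke Cauchy's formula on a circle centered at $\lambda$ of radius $r = r_a|\lambda|$, where $r_a>0$ is chosen small enough that $\arcsin r_a < \tfrac{\pi a}{8}$. This guarantees that if $|\arg\lambda|\le\tfrac{3\pi}{4}a$ then the circle lies inside $S_1$, and if $|\arg\lambda|\ge\tfrac{3\pi}{4}a$ then the circle lies inside $S_2$. The standard estimate
\[
|R^{(m)}(\lambda)| \le \frac{m!}{2\pi}\,\frac{1}{r^{m+1}}\int_{|\zeta-\lambda|=r}|R(\zeta)|\,|d\zeta| \lesssim \frac{1}{r^{m}}\max_{|\zeta-\lambda|=r}|R(\zeta)|
\]
combined with $|R(\zeta)|\lesssim |\zeta|^{-1}\simeq |\lambda|^{-1}$ on the contour then gives $R^{(m)}(\lambda)=O(|\lambda|^{-m-1})$, and the same argument applied directly to $E_{a,b}$ on $S_2$ handles the second case of the corollary.

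The only delicate point is the bookkeeping in step~2, verifying that every subleading contribution—both those inside $Q_m$ and those arising from $j\ge 1$ in the Leibniz formula—produces at worst a $\lambda^{-1/a}$ correction relative to the main term; once the inductive formula for $Q_k$ is in hand this is a straightforward comparison of exponents. Everything else is an application of the estimates already provided by the theorem together with the contour-integral version of Cauchy's bound from \cite[Theorem~1.4.2]{olver}.
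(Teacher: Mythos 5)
Your argument is correct and is exactly the paper's proof written out in full: the paper disposes of the corollary in one line by citing the standard fact (Olver, Theorem 1.4.2) that asymptotic expansions of holomorphic functions may be differentiated on slightly smaller sectors via Cauchy's integral formula, which is precisely your combination of an explicit Leibniz computation for the main term $\tfrac1a\lambda^{(1-b)/a}e^{\lambda^{1/a}}$ with a Cauchy estimate on a disk of radius $\simeq|\lambda|$ for the $O(\lambda^{-1})$ remainder. The exponent bookkeeping and the choice of $r_a$ with $\arcsin r_a<\tfrac{\pi a}{8}$ to fit the disk inside the larger sectors are both right, so nothing is missing.
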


From this result we deduce pointwise estimates 
of the function $E^{(m)}_{a,b}$. 
In order to state these estimates we introduce the 
following function.
\begin{defn}
For $ c\ge 0$, let
\begin{equation}\label{eqn:varphi}
\varphi_{c}(\lambda):=\begin{cases}
|e^{c\lambda^\ell}|, 
&\text{if $	|\arg\lambda|\le\frac{\pi}{2\ell}$},\\
1, &\text{otherwise}.
\end{cases}
\end{equation}
\end{defn}

\begin{cor}\label{cor:estimatesEab}
If  $b\in (0,1]$ and $m\in\N$, then
\begin{equation}\label{eqn:estimatesEab}
|E^{(m)}_{\frac 1\ell,b}(\lambda)|\lesssim 
(1+|\lambda|)^{m(\ell-1)+(1-b)\ell}\varphi_{1}(\lambda).
\end{equation}
\end{cor}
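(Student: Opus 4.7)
The plan is to read off the bound directly from the asymptotic expansion in the preceding corollary, specialized to $a=1/\ell$ (so $1/a=\ell$ and the exponent $(m(1-a)+1-b)/a$ becomes $\mu := m(\ell-1)+(1-b)\ell$), and then to partition $\C\setminus\{0\}$ into three angular sectors chosen so that $|e^{\lambda^\ell}|$ can be compared with $\varphi_1(\lambda)$. Observe first that, because $b\in(0,1]$ and $\ell\ge 1$, the exponent $\mu$ is nonnegative, and since $\varphi_1\ge 1$ everywhere (it equals $1$ outside the first sector, while on $|\arg\lambda|\le \pi/(2\ell)$ one has $\Re(\lambda^\ell)=|\lambda|^\ell\cos(\ell\arg\lambda)\ge 0$, hence $|e^{\lambda^\ell}|\ge 1$), the right-hand side of \eqref{eqn:estimatesEab} is bounded below by a positive constant. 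Continuity of the entire function $E^{(m)}_{1/\ell,b}$ then handles any compact set, so it suffices to prove the estimate for $|\lambda|$ large.

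For $|\arg\lambda|\le \pi/(2\ell)$ we have $\varphi_1(\lambda)=|e^{\lambda^\ell}|\ge 1$, and the leading term of the asymptotic has modulus $\lesssim |\lambda|^\mu|e^{\lambda^\ell}|$, while the error $O(\lambda^{-m-1})$ is clearly absorbed into $(1+|\lambda|)^\mu\varphi_1(\lambda)$. For $\pi/(2\ell)<|\arg\lambda|\le 3\pi/(4\ell)$ we are still within the sector in which the expansion holds, giving $|E^{(m)}_{1/\ell,b}(\lambda)|\lesssim |\lambda|^\mu|e^{\lambda^\ell}|+|\lambda|^{-m-1}$; but here $\Re(\lambda^\ell)\le 0$ so $|e^{\lambda^\ell}|\le 1=\varphi_1(\lambda)$, and the right-hand side is again $\lesssim (1+|\lambda|)^\mu\varphi_1(\lambda)$. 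Finally, for $|\arg\lambda|\ge 3\pi/(4\ell)$ the second branch of the preceding corollary yields $|E^{(m)}_{1/\ell,b}(\lambda)|\lesssim |\lambda|^{-m-1}$, which is dominated by $(1+|\lambda|)^\mu\cdot 1=(1+|\lambda|)^\mu\varphi_1(\lambda)$ since $\mu\ge 0$.

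There is essentially no obstacle: the only point to notice is that the threshold $|\arg\lambda|=\pi/(2\ell)$ built into the definition of $\varphi_1$ is exactly the ray on which $\Re(\lambda^\ell)$ changes sign, so the two pieces of the definition match the two sizes of $|e^{\lambda^\ell}|$ coming from the asymptotic expansion.
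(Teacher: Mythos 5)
Your proof is correct and is precisely the argument the paper intends (the paper states the corollary without proof, as a direct consequence of the differentiated asymptotic expansion). The specialization $a=1/\ell$, the observation that the exponent $m(\ell-1)+(1-b)\ell$ is nonnegative, and the three-sector comparison of $|e^{\lambda^\ell}|$ with $\varphi_1$ across the rays $|\arg\lambda|=\pi/(2\ell)$ and $|\arg\lambda|=3\pi/(4\ell)$ are exactly the intended steps.
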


\subsection{The Bergman projection}\quad\par

We denote by $P_\alpha$ the  Bergman projection from
 $L^{2}_\alpha$ onto $F^{2}_\alpha$ defined by 
\[
P_\alpha(f)(z)=
\langle f,K_{\alpha,z}\rangle_\alpha
=\int_{\C^n}f(w)K_\alpha(z,w)e^{-\alpha |w|^{2\ell}}dV(w),
\]
where $K_\alpha$ is the Bergman kernel and 
$K_{\alpha,z}(w):=\overline{K_\alpha(z,w)}
=K_\alpha(w,z)$.

The first result in this section states that the Bergman 
kernel can be described in terms of derivatives of the 
Mittag-Leffler function $E_{1/\ell,1/\ell}$. 
In order to do that, we recall some standard notations.  $\N$ will denote the set of non-negative entire
 numbers. For a multi-index 
$\nu=(\nu_1,\cdots,\nu_n)\in \N^n$
  and $z=(z_1,\cdots,z_n)\in \C^n$,  we use the standard 
 notations   $z^\nu=z_1^{\nu_1}\cdots z_n^{\nu_n}$,
$\nu!=\nu_1!\cdots\nu_n!$ and 
 $|\nu|=\nu_1+\cdots+\nu_n$. We then have (see, for 
instance, \cite[\S 5]{bommier-englis-youssfi}):

\begin{lem}
\label{lem:Bergmankernel}
The system 
$\bigl\{\frac{w^\nu}{\|w^\nu\|_{F^{2}_\alpha}}\bigr\}_{\nu\in
\N^n}$ is an orthonormal basis for  $F^{2}_\alpha$, 
so the Bergman kernel is
\[
	K_\alpha(z,w)=\overline{K_{\alpha,z}(w)}
	=\sum_{\nu\in \N^n}\frac{z^\nu\overline{w}^\nu}
{\|w^\nu\|^2_{F^{2}_{\alpha}}}.
\]
Namely, since 
	$\|w^\nu\|^2_{F^{2}_{\alpha}}=
	\frac {\alpha^{-\frac{|\nu|+n}\ell}}{\ell}\frac{n!\, \nu!\, 
\Gamma\left(\frac{|\nu|+n}\ell\right)}{(n-1+|\nu|)!}
	$,
$
K_{\alpha}(z,w)=H_{\alpha}(z\overline{w})
$, where
\[
H_{\alpha}(\lambda)
:=\frac{\ell\alpha^{n/\ell}}{n!}
\sum_{k=0}^\infty 
\frac{(n-1+k)!}{k!}\frac{\alpha^{k/\ell}\lambda^k}
{\Gamma\left(\frac{k+n}\ell\right)}
=\frac{\ell\alpha^{n/\ell}}{n!}
E_{1/\ell,1/\ell}^{(n-1)}(\alpha^{1/\ell}\lambda).
\]	
In particular, for any $\delta>0$ we have 
\begin{equation}\label{eqn:Kdelta}
K_{\alpha}(z,\delta w)
=K_{\alpha}(\delta z,w)
=\delta^{-n}K_{\alpha\delta^\ell }(z,w).
\end{equation}
\end{lem}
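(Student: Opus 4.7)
The plan is to proceed in three stages: first establish the orthogonal basis of monomials, then compute their norms explicitly, and finally identify the resulting power series with a derivative of a Mittag-Leffler function, extracting the scaling identity as a corollary.

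First I would verify that $\{w^\nu\}_{\nu\in\N^n}$ is an orthogonal family in $F^{2}_\alpha$: writing $w_j=r_je^{i\theta_j}$, the weight $e^{-\alpha|w|^{2\ell}}$ is invariant under each rotation $\theta_j\mapsto\theta_j+t$, so the angular integration kills all cross terms with $\mu\ne\nu$. Density of the holomorphic polynomials in $F^{2}_\alpha$ (noted in the introduction) then upgrades this orthogonal family, after normalization, to an orthonormal basis, and the standard reproducing-kernel formula yields $K_\alpha(z,w)=\sum_\nu z^\nu\overline{w}^\nu/\|w^\nu\|^2_{F^{2}_\alpha}$.

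Second I would compute $\|w^\nu\|^2_{F^{2}_\alpha}$ in closed form by passing to polar coordinates $w=r\xi$ with $\xi\in\mathbb{S}^{2n-1}$ (so $dV=2n\,r^{2n-1}\,dr\,d\sigma$ with $\int d\sigma=1$, consistent with the normalization $V(\Bn)=1$). The angular part yields the standard identity $\int_{\mathbb{S}^{2n-1}}|\xi^\nu|^2\,d\sigma=\tfrac{(n-1)!\,\nu!}{(n-1+|\nu|)!}$, while the radial integral $2n\int_0^\infty r^{2|\nu|+2n-1}e^{-\alpha r^{2\ell}}\,dr$ becomes, after the substitution $u=\alpha r^{2\ell}$, a Gamma integral equal to $\tfrac{n}{\ell}\,\alpha^{-(|\nu|+n)/\ell}\,\Gamma\bigl((|\nu|+n)/\ell\bigr)$. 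Multiplying these two factors produces the stated formula.

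Third, I would collapse the multi-index sum defining $K_\alpha(z,w)$ into a single power series in $\lambda=z\overline{w}$ via the multinomial identity $\sum_{|\nu|=k}z^\nu\overline{w}^\nu/\nu!=\lambda^k/k!$. With the closed-form norms inserted, the coefficient of $\lambda^k$ reads $\tfrac{\ell\alpha^{n/\ell}}{n!}\cdot\tfrac{(n-1+k)!\,\alpha^{k/\ell}}{k!\,\Gamma((k+n)/\ell)}$, which is exactly the coefficient obtained by differentiating $E_{1/\ell,1/\ell}(\mu)=\sum_{k\ge 0}\mu^k/\Gamma((k+1)/\ell)$ term by term $n-1$ times and reindexing; this identifies $H_\alpha$ with $\tfrac{\ell\alpha^{n/\ell}}{n!}E^{(n-1)}_{1/\ell,1/\ell}(\alpha^{1/\ell}\cdot)$. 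Finally, the scaling relation \eqref{eqn:Kdelta} drops out of the elementary observation $\|w^\nu\|^2_{F^{2}_{\alpha\delta^\ell}}=\delta^{-|\nu|-n}\|w^\nu\|^2_{F^{2}_\alpha}$, which reshuffles the series into $\delta^{-n}K_{\alpha\delta^\ell}(z,w)$. None of these steps presents a real obstacle: the argument is essentially bookkeeping, and termwise differentiation of the Mittag-Leffler series is automatic since the series is entire and converges absolutely and uniformly on compacta.
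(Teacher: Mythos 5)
Your proof is correct and follows the standard route: rotation invariance of the weight gives orthogonality, the polar-coordinate computation (with the normalization $dV=2n\,r^{2n-1}\,dr\,d\sigma$ matching $V(\Bn)=1$) gives the stated norm, the multinomial identity collapses the sum to a power series in $z\overline{w}$, and the reindexing identifies it with $E^{(n-1)}_{1/\ell,1/\ell}$; the scaling identity then follows coefficientwise. The paper does not prove this lemma but cites \cite[\S 5]{bommier-englis-youssfi}, and your argument is essentially the computation carried out there, so nothing further is needed.
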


As a consequence of \eqref{eqn:estimatesEab} and the 
fact that the Taylor coefficients of the function 
$E_{\frac 1\ell,\frac 1\ell}$ are positive, we obtain the 
following pointwise estimate of the Bergman kernel.

\begin{prop}\label{prop:estimatesK}
For $\alpha>0$ we have
\[
|K_\alpha(z,w)|\lesssim 
(1+|z|)^{n(\ell-1)}(1+|w|)^{n(\ell-1)}
\varphi_\alpha(z\overline w).
\]
In particular, if $|z|\le M$ then 
\[
|K_\alpha(z,w)|\lesssim (1+|w|)^{n(\ell-1)}
\,e^{\alpha M^\ell|w|^\ell}
\lesssim e^{\alpha(M+1)^\ell|w|^\ell},
\]
so $K_\alpha(\cdot,z)\in E$, for every $z\in\C^n$.
\end{prop}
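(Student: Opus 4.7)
The plan is to reduce the bound to an estimate for the single-variable Mittag-Leffler derivative and then clean up the resulting expression. First, I would invoke Lemma~\ref{lem:Bergmankernel} to rewrite
\[
K_\alpha(z,w) \;=\; \tfrac{\ell\alpha^{n/\ell}}{n!}\,E^{(n-1)}_{1/\ell,1/\ell}\bigl(\alpha^{1/\ell}\,z\overline w\bigr),
\]
so that everything reduces to a pointwise bound on $E^{(n-1)}_{1/\ell,1/\ell}(\lambda)$ with $\lambda=\alpha^{1/\ell}z\overline w$. Next I would apply Corollary~\ref{cor:estimatesEab} with $m=n-1$ and $b=1/\ell$, which gives
\[
\bigl|E^{(n-1)}_{1/\ell,1/\ell}(\lambda)\bigr|\;\lesssim\;(1+|\lambda|)^{(n-1)(\ell-1)+(1-1/\ell)\ell}\,\varphi_1(\lambda)\;=\;(1+|\lambda|)^{n(\ell-1)}\,\varphi_1(\lambda),
\]
since the two exponents telescope to $n(\ell-1)$.

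The next step is to rewrite the right-hand side in terms of the variables $z,w$ themselves rather than $\lambda=\alpha^{1/\ell}z\overline w$. Because $\alpha^{1/\ell}>0$, we have $\arg(\alpha^{1/\ell}z\overline w)=\arg(z\overline w)$; moreover, on the sector $|\arg(z\overline w)|\le\pi/(2\ell)$ the principal branch satisfies $(\alpha^{1/\ell}z\overline w)^\ell=\alpha(z\overline w)^\ell$, so directly from the definition~\eqref{eqn:varphi} we obtain $\varphi_1(\alpha^{1/\ell}z\overline w)=\varphi_\alpha(z\overline w)$. Combining this with the elementary inequality $(1+\alpha^{1/\ell}|z||w|)^{n(\ell-1)}\lesssim(1+|z|)^{n(\ell-1)}(1+|w|)^{n(\ell-1)}$ finishes the first assertion.

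For the consequence when $|z|\le M$, the factor $(1+|z|)^{n(\ell-1)}$ is a constant, and using $\varphi_\alpha(\lambda)\le e^{\alpha|\lambda|^\ell}$ (immediate from~\eqref{eqn:varphi}) yields the bound by $(1+|w|)^{n(\ell-1)}e^{\alpha M^\ell|w|^\ell}$. The polynomial factor $(1+|w|)^{n(\ell-1)}$ can then be absorbed into the exponential by observing that $(M+1)^\ell-M^\ell>0$ gives a strictly positive excess in the exponent, so $(1+|w|)^{n(\ell-1)}e^{\alpha M^\ell|w|^\ell}\lesssim e^{\alpha(M+1)^\ell|w|^\ell}$. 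That $K_\alpha(\cdot,z)\in E$ is then immediate from the resulting growth of order $\ell$ and finite type. The only mildly delicate point in the entire argument is the branch-tracking in the identification $\varphi_1(\alpha^{1/\ell}z\overline w)=\varphi_\alpha(z\overline w)$; everything else is bookkeeping on top of Corollary~\ref{cor:estimatesEab}.
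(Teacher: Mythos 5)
Your argument is correct and follows exactly the route the paper indicates: the paper gives no written proof, saying only that the proposition is a consequence of \eqref{eqn:estimatesEab} applied to the Mittag-Leffler representation of $K_\alpha$ in Lemma \ref{lem:Bergmankernel}, which is precisely the substitution you carry out (and your exponent bookkeeping $(n-1)(\ell-1)+(1-1/\ell)\ell=n(\ell-1)$ and the identity $\varphi_1(\alpha^{1/\ell}z\overline w)=\varphi_\alpha(z\overline w)$ are both right). The only cosmetic difference is that you replace the paper's appeal to the positivity of the Taylor coefficients of $E_{1/\ell,1/\ell}$ by the Cauchy--Schwarz bound $|z\overline w|\le|z|\,|w|$ together with the monotonicity of $t\mapsto(1+t)^{n(\ell-1)}$, which suffices since $n(\ell-1)\ge 0$.
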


The next results will be used to prove our main theorems.

\begin{prop}\label{prop:pqnormBergman}
	Let $1\le p\le\infty$, $\alpha,\gamma>0$ and $\rho\in\R$. 
Then
	\[
	\|K_{\gamma}(\cdot,z)\|_{F^{p}_{\alpha,\rho}}
	\simeq (1+|z|)^{\rho+2n(\ell-1)/p'}
	e^{\frac{\gamma^2}{2\alpha}|z|^{2\ell}}\quad(z\in\C^n).
	\]
\end{prop}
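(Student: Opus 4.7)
The plan is to prove matching upper and lower bounds by a Laplace-type analysis around the critical point of the integrand. Throughout write $r=|z|$.

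\textit{Step 1 (reduction by unitary invariance).} Since $K_\gamma(w,z)=H_\gamma(w\overline z)$ by Lemma \ref{lem:Bergmankernel}, and both $dV$ and $|w|$ are invariant under unitary transformations of $\C^n$, I may assume $z=(r,0,\ldots,0)$. Writing $w=(w_1,w')$ with $w'\in\C^{n-1}$, the integrand then depends only on $w_1$, $r$, and $|w|^2=|w_1|^2+|w'|^2$.

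\textit{Step 2 (upper bound).} Apply Proposition \ref{prop:estimatesK} to get $|K_\gamma(w,z)|\lesssim (1+r)^{n(\ell-1)}(1+|w|)^{n(\ell-1)}\varphi_\gamma(rw_1)$. Split the integration domain into the sector $S=\{w:|\arg w_1|\le\pi/(2\ell)\}$ and its complement. Outside $S$ we have $\varphi_\gamma(rw_1)=1$ and the Gaussian weight makes the integral bounded by a constant independent of $r$. On $S$ we have $\varphi_\gamma(rw_1)^p=e^{\gamma p\, r^\ell\Re w_1^\ell}\le e^{\gamma p\, r^\ell|w|^\ell}$, and completing the square
\[
\gamma p\, r^\ell|w|^\ell-\tfrac{\alpha p}{2}|w|^{2\ell}
=\tfrac{\gamma^2 p\, r^{2\ell}}{2\alpha}-\tfrac{\alpha p}{2}\bigl(|w|^\ell-\gamma r^\ell/\alpha\bigr)^2
\]
extracts the exponential factor. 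The remaining integral is essentially Gaussian in $|w|^\ell$ around the peak $|w|^\ell\simeq\gamma r^\ell/\alpha$, weighted by $(1+|w|)^{(n(\ell-1)+\rho)p}$, and a routine Laplace estimate (using the substitution $t=|w|^\ell$ and polar coordinates in $\C^n$) yields a contribution of order $r^{(n(\ell-1)+\rho)p-2n(\ell-1)}$. Combining with the prefactor $(1+r)^{np(\ell-1)}$ and taking $p$-th roots produces exactly $(1+r)^{\rho+2n(\ell-1)/p'}e^{\gamma^2 r^{2\ell}/(2\alpha)}$.

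\textit{Step 3 (lower bound).} Apply Corollary \ref{cor:estimatesEab} (or rather its sharp form) with $m=n-1$ and $b=1/\ell$ to get a matching pointwise lower bound on the sector where $|\arg\lambda|\le 3\pi/(4\ell)$. At the critical point $w_*=((\gamma/\alpha)^{1/\ell}r,0,\ldots,0)$ one computes $|e^{\lambda^\ell}|=e^{\gamma^2 r^{2\ell}/\alpha}$ and $e^{-\alpha|w_*|^{2\ell}/2}=e^{-\gamma^2 r^{2\ell}/(2\alpha)}$, while the leading factor $|\lambda|^{n(\ell-1)}\simeq r^{2n(\ell-1)}$ supplies the polynomial growth; this immediately settles $p=\infty$. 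For $1\le p<\infty$, the same pointwise lower bound holds on a neighborhood of $w_*$ whose widths are dictated by the Hessian of the phase computed in Step 2: $\simeq r^{-(\ell-1)}$ in both $|w_1|$ and $|w'|$, and $\simeq r^{-\ell}$ in $\arg w_1$, yielding Lebesgue measure $\simeq r^{-2n(\ell-1)}$. Integrating the pointwise lower bound on this set and taking $p$-th roots produces the matching lower bound.

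\textit{Main obstacle.} The delicate part is the simultaneous bookkeeping of polynomial exponents: one must track the $(1+r)^{n(\ell-1)}$ prefactor in Proposition \ref{prop:estimatesK}, the weight $(1+|w|)^{(n(\ell-1)+\rho)p}$ evaluated at $|w|\simeq r$, and the anisotropic Gaussian widths in three different coordinate directions. These combine with the polar Jacobian to produce the precise exponent $\rho+2n(\ell-1)/p'$, and matching Steps 2 and 3 requires that the Laplace asymptotics be sharp (not merely an upper bound) so that the bounds in Proposition \ref{prop:pqnormBergman} are two-sided.
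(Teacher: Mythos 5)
First, a remark on provenance: the paper does not actually prove this proposition; it quotes it from \cite{bommier-englis-youssfi} (case $\rho=0$) and \cite[Corollary 2.11]{Ca-Fa-Pa}, and the closest in-paper computation is Lemma \ref{lem:Phi}. Your overall strategy (Laplace method around the critical point $w_*=((\gamma/\alpha)^{1/\ell}r,0,\dots,0)$) is the right one, but Step 2 as written contains a genuine quantitative error and does not deliver the sharp upper bound.

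The problem is that the inequality $\varphi_\gamma(rw_1)^p=e^{\gamma p r^\ell\Re w_1^\ell}\le e^{\gamma p r^\ell|w|^\ell}$ discards all angular localization: the remaining integrand depends only on $|w|$, so completing the square in $|w|^\ell$ localizes the integral to the spherical shell $\bigl\{\,\bigl||w|^\ell-\gamma r^\ell/\alpha\bigr|\lesssim1\bigr\}$, whose Lebesgue measure is $\simeq r^{2n-\ell}$, not to the anisotropic box of measure $\simeq r^{-2n(\ell-1)}$ that you correctly describe in Step 3. Consequently the ``routine Laplace estimate'' yields $r^{(n(\ell-1)+\rho)p+2n-\ell}$, which exceeds your claimed $r^{(n(\ell-1)+\rho)p-2n(\ell-1)}$ by the factor $r^{\ell(2n-1)}$; this factor is never $1$ (already for $\ell=1$, $n=1$ your Step 2 gives $(1+|z|)^{\rho+1/p}e^{\gamma^2|z|^2/(2\alpha)}$ instead of $(1+|z|)^{\rho}e^{\gamma^2|z|^2/(2\alpha)}$). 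So the upper bound of Step 2 and the lower bound of Step 3 do not match, and the two-sided estimate is not established. The fix is the device used in Lemma \ref{lem:Phi}: first integrate out $w'$, converting the weight into $(1+|w_1|)^{(n(\ell-1)+\rho)p-2(n-1)(\ell-1)}$ (this records the $2(n-1)$ Gaussian widths $r^{-(\ell-1)}$ in the $w'$-directions); then in the remaining two-dimensional integral substitute $\lambda=w_1^\ell$ (Jacobian $\simeq|w_1|^{2(\ell-1)}$, recording the last two widths) and complete the square in the \emph{complex} variable, $\gamma r^\ell\Re\lambda-\tfrac\alpha2|\lambda|^2=\tfrac{\gamma^2r^{2\ell}}{2\alpha}-\tfrac\alpha2\bigl|\lambda-\tfrac\gamma\alpha r^\ell\bigr|^2$, so that the Gaussian is genuinely two-dimensional around $\tfrac\gamma\alpha r^\ell$ and contributes measure $O(1)$.

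Step 3 also needs a corrected reference: Corollary \ref{cor:estimatesEab} is only an upper bound, so the matching pointwise lower bound must come from the two-sided asymptotics of $E^{(n-1)}_{1/\ell,1/\ell}$ on the sector $|\arg\lambda|\le\tfrac{3\pi}{4\ell}$ (the Corollary following the Popov--Sedletskii theorem); this is legitimate near $w_*$ since there $|w\overline z|\simeq r^2\to\infty$, but it has to be invoked explicitly, and one must check that the leading term dominates the error terms uniformly on the box you describe. With these two repairs your argument goes through and becomes essentially the computation behind Lemma \ref{lem:Phi}.
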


The proof of Proposition \ref{prop:pqnormBergman} for  
$\rho=0$ is in \cite{bommier-englis-youssfi}, while the 
general case can be found in 
\cite[Corollary 2.11]{Ca-Fa-Pa}.

\begin{prop}
\label{prop:Ponto}
	Let  $1\le p\le \infty$ and $\rho\in\R$.
	If  $0\le \alpha<2\gamma$ then the Bergman projection
	 $P_\gamma$ is bounded from  $L^{p}_{\alpha,\rho}$ 
	 onto $F^{p}_{\gamma^2/(2\gamma-\alpha),\rho}$.
Moreover,  $P_\gamma$ is the identity operator on  
$F^{p}_{\alpha,\rho}$.
In particular,  
$P_\alpha:L^{p}_{\alpha,\rho}\to F^{p}_{\alpha,\rho}$ 
is bounded.
\end{prop}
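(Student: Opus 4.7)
The plan is to prove the three claims of the Proposition: boundedness of $P_\gamma : L^p_{\alpha,\rho}\to L^p_{\beta,\rho}$ (with $\beta=\gamma^2/(2\gamma-\alpha)$), the identity $P_\gamma=\operatorname{id}$ on $F^p_{\alpha,\rho}$, and surjectivity onto $F^p_{\beta,\rho}$. The last sentence, asserting that $P_\alpha$ is bounded, is the specialization $\gamma=\alpha$.

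For boundedness, set $\delta=\gamma-\alpha/2>0$, so that $\beta/2=\gamma^2/(4\delta)$. Normalizing by the source and target weights, the claim reduces to $L^p$-boundedness on $\C^n$ of the integral operator with non-negative kernel
\[
N(z,w)=(1+|z|)^\rho(1+|w|)^{-\rho}\,e^{-\frac{\beta}{2}|z|^{2\ell}}e^{-\delta|w|^{2\ell}}|K_\gamma(z,w)|.
\]
I would apply Schur's test with the trivial weight $h\equiv 1$ (equivalently, interpolate the $L^1\to L^1$ and $L^\infty\to L^\infty$ bounds via Riesz--Thorin). Both suprema bounds reduce to Proposition \ref{prop:pqnormBergman} with $p=1$: indeed,
\[
\int |K_\gamma(z,w)|(1+|w|)^{-\rho}e^{-\delta|w|^{2\ell}}\,dV(w)\simeq(1+|z|)^{-\rho}e^{\frac{\gamma^2}{4\delta}|z|^{2\ell}}=(1+|z|)^{-\rho}e^{\frac{\beta}{2}|z|^{2\ell}},
\]
and the symmetric estimate integrated in $z$ gives $(1+|w|)^{\rho}e^{\delta|w|^{2\ell}}$, so in both cases the weighted factors in $N$ cancel and the integrals are $\lesssim 1$ uniformly. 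Holomorphicity of $P_\gamma f$ (hence membership in $F^p_{\beta,\rho}$) follows from differentiation under the integral sign, justified by the decay of $K_\gamma(z,\cdot)$ in Proposition \ref{prop:estimatesK}.

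For the identity, any $f\in E$ belongs to $F^2_\gamma$ (since $|f(w)|^2e^{-\gamma|w|^{2\ell}}\lesssim e^{2\tau|w|^\ell-\gamma|w|^{2\ell}}$ is integrable for the appropriate $\tau$), so $P_\gamma f=f$ by the reproducing property of $K_\gamma$ in $F^2_\gamma$. Density of $E$ in $F^p_{\alpha,\rho}$ for $p<\infty$, combined with the boundedness established above, yields $P_\gamma=\operatorname{id}$ on $F^p_{\alpha,\rho}$; a standard weak-$\ast$ dilation argument handles $p=\infty$.

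For surjectivity, given $g\in F^p_{\beta,\rho}$ I would set $\lambda=(\beta/\gamma)^{1/\ell}$ and define $f(u):=\lambda^{-n}g(u/\lambda)e^{(\alpha-\gamma)|u|^{2\ell}}$. The change of variable $u=\lambda w$ together with the scaling identity $K_\gamma(z,\lambda w)=\lambda^{-n}K_\beta(z,w)$ from \eqref{eqn:Kdelta} and the relation $(2\gamma-\alpha)\lambda^{2\ell}=\beta$ gives
\[
P_\gamma f(z)=\int g(w)K_\beta(z,w)e^{-\beta|w|^{2\ell}}\,dV(w)=P_\beta g(z),
\]
which equals $g(z)$ by the identity property applied with $\gamma=\alpha=\beta$. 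The analogous change of variable in the $L^p_{\alpha,\rho}$-norm, combined with the algebraic identity $(\alpha/2-\gamma)\lambda^{2\ell}=-\beta/2$ (equivalent to the defining relation $\beta(2\gamma-\alpha)=\gamma^2$), yields $\|f\|_{L^p_{\alpha,\rho}}\simeq\|g\|_{F^p_{\beta,\rho}}$. The main technical step is the Schur estimate for boundedness; once Proposition \ref{prop:pqnormBergman} is available, everything else is algebraic, with the cancellation $\gamma^2/(4\delta)=\beta/2$ as the key point.
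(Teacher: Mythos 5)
The paper does not prove Proposition~\ref{prop:Ponto} at all --- it simply refers the reader to \cite{janson-peetre-rochberg}, \cite{bommier-englis-youssfi} and \cite{Ca-Fa-Pa}. Your proof fills in a complete argument, and it follows the same standard circle of ideas used in those references (normalize the kernel, apply a Schur test with constant weight, reduce the two integral bounds to the $F^1$-norm estimates of Proposition~\ref{prop:pqnormBergman}, then handle identity and surjectivity by the reproducing property and a dilation). The two algebraic identities you isolate, $\gamma^2/(4\delta)=\beta/2$ and $\gamma^2/(2\beta)=\delta$ with $\delta=\gamma-\alpha/2$, are exactly what make the Schur test close up, and the surjectivity construction via $f(u)=\lambda^{-n}g(u/\lambda)e^{(\alpha-\gamma)|u|^{2\ell}}$ with $\lambda=(\beta/\gamma)^{1/\ell}$ correctly reduces $P_\gamma f$ to $P_\beta g$. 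This is all sound.

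One small point you should tighten: for the identity when $p=\infty$, the phrase ``weak-$\ast$ dilation argument'' papers over a real obstacle. If $\gamma\le\alpha<2\gamma$, the dilate $f_\delta=f(\delta\cdot)$ lies in $F^2_\gamma$ only when $\delta^{2\ell}\alpha<\gamma$, which forces $\delta$ to stay bounded away from $1$, so you cannot pass to the limit $\delta\to1^-$. The clean fix is more elementary and needs no dilation: for $f\in F^{\infty}_{\alpha,\rho}$ choose $\alpha'\in(\alpha,2\gamma)$; the pointwise bound $|f(w)|^2\lesssim(1+|w|)^{-2\rho}e^{\alpha|w|^{2\ell}}$ shows directly that $f\in F^2_{\alpha'}$, and then the already-established $p=2$ case (proved by density of $E$) gives $P_\gamma f=f$. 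With that replacement the proof is complete and self-contained modulo Proposition~\ref{prop:pqnormBergman}.
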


The condition $\alpha<2\gamma$ 
ensures that the projection is well defined, in the sense 
that if $\varphi\in L^{p}_{\alpha,\rho}$ then 
$\varphi\,K_{\gamma,z}\in L^1_{2\gamma}$.

The proof of this proposition when  $\rho=0$  can be 
found in \cite{janson-peetre-rochberg} ($\ell=1$)
 and in \cite{bommier-englis-youssfi} ($\ell>1$).
The general case can be found in 
\cite[Proposition 4.2]{Ca-Fa-Pa}.

Observe that  by Proposition \ref{prop:Ponto} 
$f=P_\alpha(f)$, for any $f\in F^p_{\alpha,\rho}$.
Hence H\"older's inequality and Proposition 
\ref{prop:pqnormBergman} give the following 
elementary pointwise estimate.

\begin{cor} 
\label{cor:pointwise}
Let $1\le p\le\infty$, $\alpha>0$ and $\rho\in\R$. Then 
\[
|f(z)|\lesssim 
\|f\|_{F^p_{\alpha,\rho}}(1+|z|)^{-\rho+2n(\ell-1)/p}
e^{\frac{\alpha}2|z|^{2\ell}}
\quad (f\in F^p_{\alpha,\rho},\,z\in\C^n),
\]
and so 
$F^p_{\alpha,\rho}\hookrightarrow  F^\infty_{\alpha,\rho-2n(\ell-1)/p}$.
\end{cor}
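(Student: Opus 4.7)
The plan is to follow the hint preceding the statement. First I would invoke the reproducing property of the Bergman projection: since $\alpha<2\alpha$ holds trivially, Proposition \ref{prop:Ponto} applied with $\gamma=\alpha$ yields $f=P_\alpha(f)$ for every $f\in F^p_{\alpha,\rho}$, that is,
\[
f(z)=\int_{\C^n} f(w)\,K_\alpha(z,w)\,e^{-\alpha|w|^{2\ell}}\,dV(w).
\]

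Next I would split the weight symmetrically by writing $e^{-\alpha|w|^{2\ell}}=e^{-\frac{\alpha}{2}|w|^{2\ell}}\cdot e^{-\frac{\alpha}{2}|w|^{2\ell}}$ and inserting the trivial factor $(1+|w|)^{\rho}(1+|w|)^{-\rho}$, so as to pair $|f(w)|(1+|w|)^\rho e^{-\frac{\alpha}{2}|w|^{2\ell}}$ with $|K_\alpha(z,w)|(1+|w|)^{-\rho}e^{-\frac{\alpha}{2}|w|^{2\ell}}$. H\"older's inequality (with the standard $L^\infty/L^1$ conventions for the endpoints $p=\infty$ and $p=1$) then gives
\[
|f(z)|\le \|f\|_{F^p_{\alpha,\rho}}\cdot \|K_\alpha(z,\cdot)\|_{L^{p'}_{\alpha,-\rho}}.
\]

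To estimate the kernel norm I would use $|K_\alpha(z,w)|=|K_\alpha(w,z)|$ to identify $\|K_\alpha(z,\cdot)\|_{L^{p'}_{\alpha,-\rho}}$ with $\|K_\alpha(\cdot,z)\|_{F^{p'}_{\alpha,-\rho}}$, and then invoke Proposition \ref{prop:pqnormBergman} with parameters $(\gamma,p,\rho)$ replaced by $(\alpha,p',-\rho)$; note that in this substitution $\gamma^2/(2\alpha)=\alpha/2$ and the exponent $2n(\ell-1)/p'$ appearing in that proposition becomes $2n(\ell-1)/p$ after the swap $p\mapsto p'$. This yields
\[
\|K_\alpha(z,\cdot)\|_{L^{p'}_{\alpha,-\rho}}\simeq (1+|z|)^{-\rho+2n(\ell-1)/p}\,e^{\frac{\alpha}{2}|z|^{2\ell}},
\]
which combined with the H\"older bound gives the claimed pointwise estimate. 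The embedding $F^p_{\alpha,\rho}\hookrightarrow F^\infty_{\alpha,\rho-2n(\ell-1)/p}$ is then immediate by rewriting this estimate as a bound on the $F^\infty_{\alpha,\rho-2n(\ell-1)/p}$-norm.

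I do not anticipate any real obstacle here, as Propositions \ref{prop:Ponto} and \ref{prop:pqnormBergman} do all the heavy lifting; the only point requiring care is the correct tracking of the parameter substitution when invoking Proposition \ref{prop:pqnormBergman}, in particular the swap $p\leftrightarrow p'$ that accounts for the shift from $2n(\ell-1)/p'$ to $2n(\ell-1)/p$ in the final inequality.
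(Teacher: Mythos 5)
Your argument is correct and is exactly the proof the paper has in mind: reproducing formula from Proposition \ref{prop:Ponto}, H\"older's inequality against the split weight, and the kernel norm estimate of Proposition \ref{prop:pqnormBergman} with $\gamma=\alpha$, $p\mapsto p'$, $\rho\mapsto-\rho$. The bookkeeping of $\gamma^2/(2\alpha)=\alpha/2$ and $2n(\ell-1)/(p')'=2n(\ell-1)/p$ is handled correctly.
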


Using Corollary \ref{cor:pointwise} and simple 
pointwise estimates of the weights, it is easy to prove 
the following result. A detailed proof can be found in 
{\cite{Ca-Fa-Pa}}, where  we give a complete 
characterization of the embbedings 
$F^{p}_{\alpha,\rho}\hookrightarrow F^{q}_{\beta,\eta}$.

\begin{cor}
\label{cor:embedF}
Let $1\le p,q\le\infty$, $\alpha>0$ and $\rho,\eta\in\R$. 

\begin{enumerate}
\item If $\beta>\alpha$, then 
$
F^{p}_{\alpha,\rho}\hookrightarrow  F^{q}_{\beta,\eta}
$
and 
$F^{p}_{\alpha,\rho}\hookrightarrow 
\mathfrak{f}^{\infty}_{\beta,\eta}$.
\item
If  $\rho+2n(\ell-1)/p'\le \eta$ then 
$F^{1}_{\alpha,\eta}\hookrightarrow 
F^p_{\alpha,\rho}$.
\end{enumerate}
\end{cor}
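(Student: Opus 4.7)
The plan is to derive both embeddings directly from the pointwise bound in Corollary~\ref{cor:pointwise}, without needing any new machinery.

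For (i), given $f\in F^p_{\alpha,\rho}$, I would multiply the pointwise estimate by the weight of $F^\infty_{\beta,\eta}$ to obtain
\[
|f(z)|\,(1+|z|)^{\eta}\,e^{-\frac{\beta}2|z|^{2\ell}}
\lesssim \|f\|_{F^p_{\alpha,\rho}}\,
(1+|z|)^{\eta-\rho+2n(\ell-1)/p}\,
e^{-\frac{\beta-\alpha}2|z|^{2\ell}}.
\]
Since $\beta>\alpha$, the exponential factor on the right dominates any polynomial, so the right-hand side is uniformly bounded (and in fact tends to $0$ as $|z|\to\infty$). This immediately gives $F^p_{\alpha,\rho}\hookrightarrow F^\infty_{\beta,\eta}$. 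The decay at infinity places $f$ in $\mathfrak{f}^\infty_{\beta,\eta}$, once we invoke the standard fact (Taylor-polynomial approximation on growing balls, together with the Gaussian-type decay of the weight) that $\mathfrak{f}^\infty_{\beta,\eta}$ coincides with the subspace of $F^\infty_{\beta,\eta}$ whose weighted modulus vanishes at infinity. For $1\le q<\infty$, raising the same pointwise inequality to the $q$-th power and integrating gives
\[
\|f\|^q_{L^q_{\beta,\eta}}
\lesssim \|f\|^q_{F^p_{\alpha,\rho}}
\int_{\C^n}(1+|z|)^{(\eta-\rho+2n(\ell-1)/p)q}\,
e^{-\frac{(\beta-\alpha)q}2|z|^{2\ell}}\,dV(z),
\]
and the last integral is finite since $\beta-\alpha>0$.

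For (ii), assume $\rho+2n(\ell-1)/p'\le\eta$ and $f\in F^1_{\alpha,\eta}$. The idea is to split $|f|^p=|f|\cdot|f|^{p-1}$ inside the integral defining $\|f\|^p_{L^p_{\alpha,\rho}}$ and to control only the factor $|f|^{p-1}$ via Corollary~\ref{cor:pointwise} (used with exponent $1$ and weight parameter $\eta$):
\[
|f(z)|^{p-1}\lesssim \|f\|^{p-1}_{F^1_{\alpha,\eta}}\,
(1+|z|)^{(2n(\ell-1)-\eta)(p-1)}\,
e^{\frac{\alpha(p-1)}2|z|^{2\ell}}.
\]
Inserting this bound, the exponentials collapse to $e^{-\frac{\alpha}2|z|^{2\ell}}$, and the residual polynomial weight becomes $(1+|z|)^{\rho p+(2n(\ell-1)-\eta)(p-1)}$. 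A short arithmetic check shows
\[
\rho p+(2n(\ell-1)-\eta)(p-1)\le\eta
\quad\Longleftrightarrow\quad
\rho+2n(\ell-1)/p'\le\eta,
\]
so under the hypothesis the polynomial factor is dominated by $(1+|z|)^{\eta}$. What is left is exactly the integral defining $\|f\|_{F^1_{\alpha,\eta}}$, yielding $\|f\|^p_{F^p_{\alpha,\rho}}\lesssim\|f\|^p_{F^1_{\alpha,\eta}}$.

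There is really no serious obstacle: the whole argument is bookkeeping around Corollary~\ref{cor:pointwise}. The only non-computational point is the identification of $\mathfrak{f}^\infty_{\beta,\eta}$ with the subspace of $F^\infty_{\beta,\eta}$ of functions whose weighted modulus vanishes at infinity, which is the standard Fock-type density lemma for polynomials and is already implicit in the paper's conventions.
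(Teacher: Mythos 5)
Your argument is correct and is exactly the route the paper intends: the paper itself only says that the corollary follows from Corollary~\ref{cor:pointwise} and elementary weight estimates (deferring details to \cite{Ca-Fa-Pa}), and both your computation for (i) and the H\"older-free splitting $|f|^p=|f|\cdot|f|^{p-1}$ for (ii), whose exponent arithmetic checks out, are precisely such estimates (for $p=\infty$ in (ii) the splitting degenerates and the claim is just Corollary~\ref{cor:pointwise} itself). The one step you assert rather than prove is the identification of $\mathfrak{f}^{\infty}_{\beta,\eta}$ with the vanishing-at-infinity subspace of $F^{\infty}_{\beta,\eta}$; this is standard, but you can avoid it entirely with the paper's own tools by interposing $\gamma\in(\alpha,\beta)$, noting $F^{p}_{\alpha,\rho}\hookrightarrow F^{2}_{\gamma}\hookrightarrow F^{\infty}_{\beta,\eta}$ by part (i), and using the density of polynomials in $F^{2}_{\gamma}$ (Remark~\ref{rem:density}) to conclude that every $f\in F^{p}_{\alpha,\rho}$ lies in the closure of the polynomials in $F^{\infty}_{\beta,\eta}$.
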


The next interpolation result will be used in the 
forthcoming sections 
(see, for instance, \cite[Lemma 3.10]{Ca-Fa-Pa}).
 
\begin{lem}
\label{lem:interpolation}
Let $1<p<\infty$. Then for $\theta=1/p'$ we have 
\[
(F^1_{\alpha,\rho},F^\infty_{\alpha,\rho})_{[\theta]}
=F^p_{\alpha,\rho}.
\]
\end{lem}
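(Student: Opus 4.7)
The plan is to reduce the interpolation of the Fock-Sobolev spaces to the interpolation of ordinary weighted Lebesgue spaces via a standard retract argument, using the Bergman projection as the retraction.

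First, I would identify $(L^1_{\alpha,\rho}, L^\infty_{\alpha,\rho})_{[\theta]} = L^p_{\alpha,\rho}$. The multiplication operator
\[
M_{\alpha,\rho}(f)(z) := f(z)\,(1+|z|)^{\rho}\,e^{-\frac{\alpha}{2}|z|^{2\ell}}
\]
is an isometric isomorphism from $L^p_{\alpha,\rho}$ onto the unweighted Lebesgue space $L^p(\C^n,dV)$, for every $p\in[1,\infty]$. Since $M_{\alpha,\rho}$ is a common isometry on the pair and interpolation functors commute with isometric isomorphisms, this reduces the problem to Calderón's classical result $(L^1(dV),L^\infty(dV))_{[\theta]} = L^p(dV)$ with $1/p = 1-\theta$, valid on the $\sigma$-finite space $(\C^n,dV)$. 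Rewriting $\theta = 1/p'$ and transferring via $M_{\alpha,\rho}^{-1}$ gives the desired identification.

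Next, I would invoke Proposition \ref{prop:Ponto}: the Bergman projection $P_\alpha:L^p_{\alpha,\rho}\to F^p_{\alpha,\rho}$ is bounded for every $p\in[1,\infty]$, and it acts as the identity on $F^p_{\alpha,\rho}$. Together with the continuous inclusion $J:F^p_{\alpha,\rho}\hookrightarrow L^p_{\alpha,\rho}$, we have $P_\alpha\circ J = \mathrm{Id}_{F^p_{\alpha,\rho}}$, so $(F^1_{\alpha,\rho},F^\infty_{\alpha,\rho})$ is a retract of the compatible Banach pair $(L^1_{\alpha,\rho},L^\infty_{\alpha,\rho})$, with retraction $P_\alpha$ and coretraction $J$, both acting consistently on the sum. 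Applying the standard interpolation theorem for retracts (Bergh--L\"ofstr\"om, Theorem 6.4.2) yields
\[
(F^1_{\alpha,\rho},F^\infty_{\alpha,\rho})_{[\theta]}
= P_\alpha\bigl((L^1_{\alpha,\rho},L^\infty_{\alpha,\rho})_{[\theta]}\bigr)
= P_\alpha(L^p_{\alpha,\rho}) = F^p_{\alpha,\rho},
\]
with equivalent norms, which is the claim.

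The only genuine subtlety is confirming that the pair is admissible for complex interpolation despite the nonreflexivity of $L^\infty$: one must check the compatibility condition, namely that $L^1_{\alpha,\rho}\cap L^\infty_{\alpha,\rho}$ is dense in the Calder\'on space, or equivalently work with the Calder\'on complex method which does not require density. This is standard and poses no real difficulty since both $L^1_{\alpha,\rho}$ and $L^\infty_{\alpha,\rho}$ embed continuously into $L^1_{\mathrm{loc}}(\C^n)$, providing a natural Hausdorff ambient space. Once this compatibility is in hand, the rest is an essentially formal application of the retract theorem, and no holomorphy-specific argument is needed beyond the boundedness of $P_\alpha$ already recorded in Proposition \ref{prop:Ponto}.
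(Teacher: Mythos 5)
Your proposal is correct and follows essentially the same route the paper takes: the paper cites \cite[Lemma 3.10]{Ca-Fa-Pa} for this lemma, but its own proof of the analogous interpolation identities \eqref{item:interpolationL}--\eqref{item:interpolationF} in Section 6 uses exactly your argument — an isometric reduction to unweighted Lebesgue interpolation followed by the retract theorem with $P_\alpha$ as retraction (Proposition \ref{prop:Ponto}). No gaps.
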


Next lemma studies 
 the action of dilations on Fock-Sobolev spaces.

\begin{lem}\label{lem:pairing}
Let $1\le p\le\infty$, $\alpha, \beta>0$ and $\rho\in\R$.  
For $\delta>0$ we have:
\begin{enumerate}
\item \label{item:pairing1} The dilation operator 
$f\mapsto f(\delta\cdot)$ is a topological  isomorphism 
from $F^p_{\alpha,\rho}$ onto 
$F^p_{\delta^{2\ell}\alpha,\rho}$.
\item \label{item:pairing2} If $f,g\in E$, then $\langle 
f,g\rangle_\alpha=\delta^{2n}\langle 
f(\cdot),g(\delta^2\cdot)\rangle_{\delta^{2\ell}\alpha}$.
\item \label{item:pairing3}  If $f\in E$, $g\in 
F^p_{\beta,\rho}$ and 
$\delta^{2\ell}<2\alpha/\beta$, then 
$f(\cdot)\,g(\delta^2\cdot)\in L^1_{2\delta^{2\ell}\alpha}$. 
\end{enumerate}
\end{lem}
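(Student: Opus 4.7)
I would handle the three parts in the order stated; each is essentially routine given the tools already developed, and the only mildly delicate step is the sum-integral interchange in (ii), handled by dominated convergence against the Gaussian weight.

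For (i), a single change of variables $w=\delta z$ in $\|f(\delta\cdot)\|^p_{F^p_{\delta^{2\ell}\alpha,\rho}}$ turns the exponential factor into $e^{-\frac{p\alpha}{2}|w|^{2\ell}}$ (since $\delta^{2\ell}|z|^{2\ell}=|w|^{2\ell}$), absorbs a harmless $\delta^{-2n}$ from the Lebesgue measure, and replaces $(1+|z|)^{\rho p}$ by $(1+|w|/\delta)^{\rho p}$. A direct check shows $(1+|w|/\delta)\simeq (1+|w|)$ uniformly in $w\in\C^n$ with comparison constants depending only on $\delta$ (the bounds $\min(1,1/\delta)$ and $\max(1,1/\delta)$ suffice). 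This yields the norm equivalence, and the operator $f\mapsto f(\delta^{-1}\cdot)$ provides the inverse.

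For (ii), the plan is to reduce to the orthonormal monomial basis supplied by Lemma~\ref{lem:Bergmankernel}. Since $f,g\in E$, their Taylor expansions $f=\sum_\mu a_\mu z^\mu$ and $g=\sum_\nu b_\nu z^\nu$ converge absolutely uniformly on compact sets. The Gaussian weight $e^{-\alpha|z|^{2\ell}}$ decays faster than any polynomial and than $e^{\tau|z|^\ell}$ for every $\tau>0$, so dominated convergence permits termwise integration. Using the orthogonality of distinct monomials under the $\alpha$-pairing (immediate from angular integration of $e^{i(\mu-\nu)\cdot\theta}$), one obtains $\langle f,g\rangle_\alpha=\sum_\mu a_\mu\overline{b_\mu}\,\|z^\mu\|^2_{F^2_\alpha}$. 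On the right-hand side, expanding $g(\delta^2 z)=\sum_\nu b_\nu\,\delta^{2|\nu|}z^\nu$ and applying the same orthogonality gives
\[
\delta^{2n}\,\langle f,g(\delta^2\cdot)\rangle_{\delta^{2\ell}\alpha}=\sum_\mu a_\mu\overline{b_\mu}\,\delta^{2n+2|\mu|}\,\|z^\mu\|^2_{F^2_{\delta^{2\ell}\alpha}}.
\]
The explicit formula of Lemma~\ref{lem:Bergmankernel} yields $\|z^\mu\|^2_{F^2_{\delta^{2\ell}\alpha}}=\delta^{-2(|\mu|+n)}\,\|z^\mu\|^2_{F^2_\alpha}$, so the factor $\delta^{2n+2|\mu|}$ precisely cancels the discrepancy and the two series agree term by term.

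For (iii), the strategy is to bound $|f(z)\,g(\delta^2 z)|$ pointwise and extract Gaussian decay. Corollary~\ref{cor:pointwise} applied to $g$ gives $|g(\delta^2 z)|\lesssim\|g\|_{F^p_{\beta,\rho}}(1+\delta^2|z|)^{-\rho+2n(\ell-1)/p}\,e^{\frac{\beta}{2}\delta^{4\ell}|z|^{2\ell}}$, while $f\in E$ contributes at most $e^{\tau|z|^\ell}$ for some $\tau>0$. Multiplying against the weight $e^{-\delta^{2\ell}\alpha|z|^{2\ell}}$ of $L^1_{2\delta^{2\ell}\alpha}$ leaves the exponent in $|z|^{2\ell}$ equal to $\delta^{2\ell}\bigl(\tfrac{\beta}{2}\delta^{2\ell}-\alpha\bigr)$, which is strictly negative exactly under the hypothesis $\delta^{2\ell}<2\alpha/\beta$; this Gaussian decay dominates the subexponential factor $e^{\tau|z|^\ell}$ and all polynomial growth, producing integrability.
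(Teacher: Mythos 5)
Your proposal is correct and follows essentially the same route as the paper: the change of variables $w=\delta z$ for (i), orthogonality of the monomials for (ii) (the paper moves the dilation from both factors to one, you verify the identity coefficientwise via the explicit formula for $\|z^\mu\|^2_{F^2_\alpha}$ — the same computation), and for (iii) your direct pointwise estimate is just an unwound version of the paper's ``combine (i) and (ii)''. The only detail worth making explicit in (ii) is that termwise integration is licensed because the majorant $\sum_\mu |a_\mu|\,|z^\mu|$ of an $f\in E$ is itself $O(e^{\tau'|z|^\ell})$, so Tonelli applies against the weight $e^{-\alpha|z|^{2\ell}}$.
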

\begin{proof}
The change of variables $w=\delta z$  easily gives 
\eqref{item:pairing1}. The same change of variables 
together with  the orthogonality of the monomials give 
\eqref{item:pairing2}, since 
$
\langle f,g\rangle_\alpha=\delta^{2n}\langle 
f(\delta\cdot),g(\delta\cdot)\rangle_{\delta^{2\ell}\alpha}=
\delta^{2n}\langle 
f,g(\delta^2\cdot)\rangle_{\delta^{2\ell}\alpha}.
$
Finally, assertion \eqref{item:pairing3} follows from 
\eqref{item:pairing1} and  \eqref{item:pairing2}.
\end{proof}

\begin{rem}\label{rem:density}
As it happens in the classical case $\ell=1$ and $\rho=0$ 
(see, for instance, \cite[Proposition 2.9]{Zhu2012}),  
Lemma \ref{lem:pairing} \eqref{item:pairing1} and 
Corollary \ref{cor:embedF} allow us to prove the 
density of the holomorphic polynomials in 
$F^p_{\alpha,\rho}$, $1\le p<\infty$. Indeed,
if $f\in F^p_{\alpha,\rho}$ and  $f_\delta:=f(\delta\cdot)$, 
$0<\delta<1$, then 
$f_\delta\in F^p_{\delta^{2\ell}\alpha,\rho}\subset 
F^2_{\delta^{\ell}\alpha,\rho}\subset F^p_{\alpha,\rho}$.
Now, standard arguments give 
$\|f_\delta-f\|_{F^p_{\alpha,\rho}}\to 0$ as $\delta\to1^-$.
Finally, for fixed $0<\delta<1$ there is a 
sequence of polynomials $\{q_{\delta,k}\}_k$ such that 
$\|f_\delta-q_{\delta,k}\|_{F^2_{\delta^\ell\alpha,\rho}}
\to 0$ as $k\to\infty$, so 
$\|f_\delta-q_{\delta,k}\|_{F^p_{\alpha,\rho}}\to 0$.
\end{rem}

We finish this section with a duality result that  we 
will use later. 
Its proof is standard, but since
 we have not found an explicit reference, 
for a sake of completeness we supply a sketch of the  proof.

\begin{prop}\label{prop:dualF}
If $1\le p<\infty$ and $\alpha/2\le\gamma<2\alpha$, then the dual 
$(F^p_{\alpha,\rho})'$ of $F^p_{\alpha,\rho}$
(with respect to the $\gamma$-pairing)  is 
$F^{p'}_{\frac{\gamma^2}\alpha,-\rho}$. 
Moreover, the   dual of 
$\mathfrak{f}^\infty_{\alpha,\rho}$ is 
$F^{1}_{\frac{\gamma^2}\alpha,-\rho}$.
\end{prop}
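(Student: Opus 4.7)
My approach is to build the duality by combining the Hahn--Banach theorem with the mapping properties of the Bergman projection from Proposition \ref{prop:Ponto}. The hypothesis $\alpha/2\le\gamma<2\alpha$ plays a double role throughout: $\gamma\ge\alpha/2$ guarantees $2\gamma-\alpha\ge 0$, so $P_\gamma$ is well-defined on the auxiliary space $L^{p'}_{2\gamma-\alpha,-\rho}$, and $\gamma<2\alpha$ gives $\gamma^2/\alpha<2\gamma$, so $P_\gamma$ acts as the identity on the target Fock space $F^{p'}_{\gamma^2/\alpha,-\rho}$.

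\emph{Sufficiency.} For $g$ in the candidate dual space, I would apply Proposition \ref{prop:Ponto} and the open mapping theorem to produce $\phi\in L^{p'}_{2\gamma-\alpha,-\rho}$ with $P_\gamma\phi=g$ and $\|\phi\|\lesssim\|g\|$. Since $\alpha/2+(2\gamma-\alpha)/2=\gamma$, the weights in $|\langle f,\phi\rangle_\gamma|$ match exactly, so H\"older gives $|\langle f,\phi\rangle_\gamma|\le\|f\|_{L^p_{\alpha,\rho}}\|\phi\|_{L^{p'}_{2\gamma-\alpha,-\rho}}$. A Fubini computation that uses $P_\gamma f=f$ for $f\in F^p_{\alpha,\rho}$ (Proposition \ref{prop:Ponto}, since $\alpha<2\gamma$) identifies $\langle f,\phi\rangle_\gamma$ with $\langle f,g\rangle_\gamma$, so $g$ induces a bounded functional with the correct norm bound.

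\emph{Necessity for $F^p$, $1\le p<\infty$.} First I would extend $\Lambda\in (F^p_{\alpha,\rho})'$ to $L^p_{\alpha,\rho}$ by Hahn--Banach. The standard identification $(L^p_{\alpha,\rho})'=L^{p'}_{\alpha,-\rho}$ via the $\alpha$-pairing produces $\varphi\in L^{p'}_{\alpha,-\rho}$ with $\Lambda(f)=\langle f,\varphi\rangle_\alpha$ and $\|\varphi\|\simeq\|\Lambda\|$. For $f\in E$, the $O(e^{\tau|w|^\ell})$ growth of $f$ is absorbed by the Gaussian $e^{-\gamma|w|^{2\ell}}$, so the reproducing identity $f(w)=\langle f,K_{\gamma,w}\rangle_\gamma$ applies, and Fubini yields
\[
\Lambda(f)=\langle f,g\rangle_\gamma,\qquad g(w):=\int_{\C^n} K_\gamma(w,z)\varphi(z)e^{-\alpha|z|^{2\ell}}dV(z).
\]
Setting $\psi(z):=\varphi(z)e^{(\gamma-\alpha)|z|^{2\ell}}$ places $\psi$ in $L^{p'}_{2\gamma-\alpha,-\rho}$ with $\|\psi\|=\|\varphi\|$ and realizes $g=P_\gamma\psi$; Proposition \ref{prop:Ponto} then gives $g\in F^{p'}_{\gamma^2/\alpha,-\rho}$ with $\|g\|\lesssim\|\Lambda\|$. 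Density of $E$ in $F^p_{\alpha,\rho}$ (Remark \ref{rem:density}) completes the argument.

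\emph{Necessity for $\mathfrak{f}^\infty_{\alpha,\rho}$ and main obstacle.} Here the weight isometry sends $\mathfrak{f}^\infty_{\alpha,\rho}$ onto a closed subspace of $C_0(\C^n)$, so Hahn--Banach together with the Riesz--Radon theorem furnishes a finite complex Borel measure $\mu$ with $\|\mu\|\simeq\|\Lambda\|$ and $\Lambda(f)=\int f(z)(1+|z|)^\rho e^{-\alpha|z|^{2\ell}/2}d\mu(z)$. I would then set $g(w):=\int K_\gamma(w,z)(1+|z|)^\rho e^{-\alpha|z|^{2\ell}/2}d\overline{\mu}(z)$ and apply Minkowski's integral inequality together with the sharp estimate $\|K_\gamma(\cdot,z)\|_{F^1_{\gamma^2/\alpha,-\rho}}\simeq(1+|z|)^{-\rho}e^{\alpha|z|^{2\ell}/2}$ from Proposition \ref{prop:pqnormBergman} (the polynomial exponent $2n(\ell-1)/p'$ vanishes because $p'=\infty$); the weights cancel exactly and one obtains $\|g\|_{F^1_{\gamma^2/\alpha,-\rho}}\lesssim\|\mu\|$. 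Fubini plus the reproducing identity on $E$ yields $\Lambda(f)=\langle f,g\rangle_\gamma$ on $E$, and density of polynomials extends this to all of $\mathfrak{f}^\infty_{\alpha,\rho}$. The main obstacle throughout is the Fubini interchange: justifying absolute convergence requires simultaneously controlling the Gaussian weights, the order-$\ell$ or polynomial growth of the other factors, and the integrability of $\varphi$ or the finiteness of $|\mu|$, balances that are made possible precisely by the constraint $\alpha/2\le\gamma<2\alpha$.
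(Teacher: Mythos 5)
Your proposal is correct, and the sufficiency direction together with the necessity for $F^p_{\alpha,\rho}$, $1\le p<\infty$, follows essentially the paper's route: both rest on $F^{p'}_{\gamma^2/\alpha,-\rho}=P_\gamma(L^{p'}_{2\gamma-\alpha,-\rho})$ from Proposition \ref{prop:Ponto}, H\"older, and the classical identification $(L^p_{\alpha,\rho})'=L^{p'}_{\alpha,-\rho}$. The only difference there is organizational: the paper first dilates (Lemma \ref{lem:pairing}) to reduce to $\gamma=\alpha$ and then applies $P_\alpha$ to the $L^{p'}$-representer, whereas you keep a general $\gamma$ and absorb the discrepancy into the substitution $\psi=\varphi\,e^{(\gamma-\alpha)|z|^{2\ell}}$, which is an exact isometry $L^{p'}_{\alpha,-\rho}\to L^{p'}_{2\gamma-\alpha,-\rho}$; both uses of the hypothesis $\alpha/2\le\gamma<2\alpha$ are correctly identified. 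Where you genuinely diverge is the predual statement $(\mathfrak{f}^\infty_{\alpha,\rho})'=F^1_{\gamma^2/\alpha,-\rho}$. The paper restricts the functional to $F^2_\beta$ for some $\alpha/2<\beta<\alpha$ (using Corollary \ref{cor:embedF}), invokes the Hilbert-space Riesz representation, dilates the representer back, and only then proves the $F^1$-bound a posteriori by testing against $P_\alpha(Tf)$ for $f\in C_c(\C^n)$. You instead embed $\mathfrak{f}^\infty_{\alpha,\rho}$ isometrically into $C_0(\C^n)$, represent the functional by a finite Borel measure via Riesz--Radon, and sweep the measure with the Bergman kernel, getting the $F^1_{\gamma^2/\alpha,-\rho}$-bound directly from Minkowski's inequality and the exact cancellation in $\|K_\gamma(\cdot,z)\|_{F^1_{\gamma^2/\alpha,-\rho}}\simeq(1+|z|)^{-\rho}e^{\frac{\alpha}{2}|z|^{2\ell}}$ (Proposition \ref{prop:pqnormBergman}). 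Your route buys a one-step norm estimate with no dilation and no separate duality computation, at the price of invoking measure representation on $C_0$; the paper's route stays inside the Fock scale and avoids measures. Both the Fubini interchanges and the holomorphy of your swept function $g$ are routine under $\gamma<2\alpha$, as you note, so the argument is complete.
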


\begin{proof}
First we prove that if $g\in F^{p'}_{\frac{\gamma^2}\alpha,-\rho}$, then $f\in E\to \langle f,g\rangle_\gamma$ extends to  a bounded linear form on $F^p_{\alpha,\rho}$.
Since $0<\alpha\le 2\gamma$, Proposition \ref{prop:Ponto} gives $F^{p'}_{\frac{\gamma^2}\alpha,-\rho}
=P_\gamma(L^{p'}_{2\gamma-\alpha,-\rho})$. Therefore, if  $g\in F^{p'}_{\frac{\gamma^2}\alpha,-\rho}$, then there exists $\varphi\in L^{p'}_{2\gamma-\alpha,-\rho}$ such that  $g=P_\gamma(\varphi)$ and  $\|\varphi\|_{L^{p'}_{2\gamma-\alpha}}
\simeq \|g\|_{F^{p'}_{\frac{\gamma^2}\alpha,-\rho}}
$.
As a consequence, 
\[
|\langle f,g\rangle_{\gamma}|
=|\langle f,\varphi\rangle_{\gamma}
\le \|\varphi\|_{L^{p'}_{2\gamma-\alpha}}
\|f\|_{F^p_{\alpha,\rho}}
\simeq \|g\|_{F^{p'}_{\frac{\gamma^2}\alpha,-\rho}}
\|f\|_{F^p_{\alpha,\rho}} \quad (f\in E).
\]
In order to prove the converse, observe that 
Lemma \ref{lem:pairing}\eqref{item:pairing1}  with 
$\delta^{2\ell}=\gamma/\alpha$ reduces the proof to 
the case $\gamma=\alpha$. 
Namely,  $b\in F^{p'}_{\alpha,-\rho}$ if and only if 
$g=b(\delta^2\cdot)\in 
F^{p'}_{\frac{\gamma^2}\alpha,-\rho}$, and, 
since by hypothesis $\gamma^2/\alpha<2\gamma$, we 
have that for any  $f\in E$  $fg\in L^1_{2\gamma}$ and 
$\langle f,b\rangle_\alpha=\langle f,g\rangle_\gamma$.
	From the classical $L^p$-duality it is easy to check 
that  the dual of $L^{p}_{\alpha,\rho}$ with respect to 
the $\alpha$-pairing is 
$L^{p'}_{\alpha,-\rho}$.
This result together with Proposition \ref{prop:Ponto},
for $\alpha=\beta$, prove the duality for 
$F^{p}_{\alpha,\rho}$.

   Next we deal with the duality of 
$\mathfrak{f}^\infty_{\alpha,\rho}$. 
Note that if $b\in F^{1}_{\alpha,-\rho}$ then
    $\langle\cdot,b\rangle_\alpha\in  
(\mathfrak{f}^{\infty}_{\alpha,\rho})^*$
    and
  $\|\langle\cdot,b\rangle_\alpha\|
_{(\mathfrak{f}^{\infty}_{\alpha,\rho})^*}
    \lesssim \|b \|_{F^{1}_{\alpha,-\rho}}$.

Conversely, given 
$u\in(\mathfrak{f}^{\infty}_{\alpha,\rho})^*$, we are 
going to prove that there is  $b\in F^{1}_{\alpha,-\rho}$ 
such that $u=\langle\cdot,b\rangle_\alpha$ and 
$\|b\|_{F^{1}_{\alpha,-\rho}}\lesssim\|u\|_{(\mathfrak{f}^{
\infty}_{\alpha,\rho})^*}$.
Choose $\alpha/2<\beta<\alpha$. By Corollary 
\ref{cor:embedF} we have 
$F^{2}_{\beta}\hookrightarrow 
\mathfrak{f}^{\infty}_{\alpha,\rho}$ and so the
restriction of $u$ to $F^{2}_{\beta}$ is a bounded 
linear form on this space.
It follows that there is $g\in F^{2}_{\beta}$ such that
$u(f)=\langle f,g\rangle_\beta$, for every $f\in E$. 
Now, 
by Lemma \ref{lem:pairing} with  
 $\delta^{2\ell}=\frac\alpha\beta<2$, we have  
$b=g(\delta^2\cdot)\in F^2_{\delta^{4\ell}\beta}
=F^2_{\frac{\alpha^2}\beta}$ 
and 
$
u(f)=
\langle f,\,b\rangle_{\alpha}$, for any $f\in E$.

Thus it only remains to prove that 
$\|b\|_{L^{1}_{\alpha,-\rho}}
\lesssim\|u\|_{(\mathfrak{f}^{\infty}_{\alpha,\rho})^*}$.

For $f\in C_c(\C^n)$, let  
$Tf(z):=f(z)(1+|z|)^{-\rho}e^{\frac{\alpha}2|z|^{2\ell}}
\in L^\infty_{\alpha,\rho}$. 
Then we have $\|P_\alpha(Tf)\|_{ F^\infty_{\alpha,\rho}}
\lesssim \|Tf\|_{ L^\infty_{\alpha,\rho}}=\|f\|_{L^\infty}$. 
Since $f$ is compactly supported, Proposition 
\ref{prop:estimatesK} gives that  $P_\alpha(Tf)\in E$. 
Then, by duality,
\[
\|b\|_{L^{1}_{\alpha,-\rho}}
=\sup_{\substack{ f\in C_c(\C^n)\\\|f\|_{L^\infty}=1}}
|\langle Tf,b\rangle_\alpha|
=
\sup_{\substack{ f\in C_c(\C^n)\\\|f\|_{L^\infty}=1}}
|u(P_{\alpha}(T f))|\lesssim 
\|u\|_{(\mathfrak{f}^{\infty}_\alpha)^*}.
\qedhere
\]
\end{proof}

\section{Proof of  Theorem \ref{thm:LP}}
\label{sec:LP}

We begin the section with the following technical lemma.

\begin{lem}\label{lem:Phi}
 For $c\in \R$, 
 let $\Phi_{c,z}(w):=\varphi_c(w\overline z)$,
where $\varphi_c$ is defined by \eqref{eqn:varphi}.
Then, for any $1\le p\le \infty$, $\alpha>0$, $\rho\in\R$ 
and $c\in [0,\alpha]$,
\[
\|\Phi_{c,z}\|_{L^{p}_{\alpha,\rho}}
\simeq (1+c^{1/\ell} |z|)^{\rho-2n(\ell-1)/p}
e^{\frac{c^2}{2\alpha}|z|^{2\ell}}.
\]
\end{lem}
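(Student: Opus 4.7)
The plan is to evaluate the defining integral by reducing it to one complex dimension along the direction of $z$. After handling the trivial cases (if $z=0$ or $c=0$ then $\Phi_{c,z}\equiv 1$ and both sides are $\simeq 1$), I would assume $z\neq 0$ and $c>0$, set $\zeta=z/|z|$, and decompose $w=w_\parallel\zeta+w'$ with $w_\parallel\in\C$ and $w'\in\zeta^\perp\cong\C^{n-1}$ in the Hermitian-orthogonal sense. This gives $w\bar z=|z|w_\parallel$ and $|w|^2=|w_\parallel|^2+|w'|^2$, so by Fubini
\[
\|\Phi_{c,z}\|_{L^p_{\alpha,\rho}}^p
=\int_\C \varphi_c(|z|w_\parallel)^p\,I(|w_\parallel|)\,dV_1(w_\parallel),
\]
where $I(r)$ is the integral of $(1+|w|)^{p\rho}e^{-\alpha p|w|^{2\ell}/2}$ over $w'\in\C^{n-1}$ with $|w_\parallel|=r$ fixed (and $I(r)=(1+r)^{p\rho}e^{-\alpha pr^{2\ell}/2}$ when $n=1$).

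A direct polar-coordinate computation in $\C^{n-1}$, using that for large $r$ the Gaussian $e^{-\alpha p(r^2+|w'|^2)^\ell/2}$ concentrates near $|w'|=0$ with width $\sim r^{1-\ell}$ in each of the $2(n-1)$ real directions, yields the uniform estimate $I(r)\simeq (1+r)^{p\rho-2(n-1)(\ell-1)}e^{-\alpha pr^{2\ell}/2}$. Passing to polar coordinates $w_\parallel=re^{i\theta}$ and using the definition of $\varphi_c$, I would split the remaining integral according to whether $|\theta|\le\pi/(2\ell)$ or not. On the complement $\varphi_c\equiv 1$, and the corresponding contribution is uniformly $O(1)$; this governs the regime $c^{1/\ell}|z|\lesssim 1$, in which the right-hand side of the lemma is itself $\simeq 1$. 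The main task is to estimate the sector contribution
\[
A:=\int_{-\pi/(2\ell)}^{\pi/(2\ell)}\!\!\int_0^\infty
e^{cpr^\ell|z|^\ell\cos(\ell\theta)-\alpha pr^{2\ell}/2}(1+r)^{p\rho-2(n-1)(\ell-1)}\,r\,dr\,d\theta.
\]

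The remaining step is a two-dimensional Laplace/saddle-point analysis of $A$. The phase $\phi(r,\theta)=cpr^\ell|z|^\ell\cos(\ell\theta)-\alpha pr^{2\ell}/2$ has a unique interior maximum at $(r_*,0)$ with $r_*=(c/\alpha)^{1/\ell}|z|$, where $\phi_{\max}=c^2p|z|^{2\ell}/(2\alpha)$. The substitution $r=r_*(1+u)$ brings the phase into the closed form
\[
\phi_{\max}-\phi=\frac{c^2p}{2\alpha}|z|^{2\ell}\Bigl[\sin^2(\ell\theta)+\bigl((1+u)^\ell-\cos(\ell\theta)\bigr)^2\Bigr],
\]
which exhibits the Gaussian behaviour around the saddle uniformly in $c^{1/\ell}|z|\gtrsim 1$. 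The Hessian at $(r_*,0)$ is diagonal, with $|\phi_{rr}|\simeq(c^{1/\ell}|z|)^{2\ell-2}$ and $|\phi_{\theta\theta}|\simeq(c^{1/\ell}|z|)^{2\ell}$, so the Gaussian integral contributes a factor $\simeq (c^{1/\ell}|z|)^{-(2\ell-1)}$. Together with the amplitude $r_*(1+r_*)^{p\rho-2(n-1)(\ell-1)}\simeq(c^{1/\ell}|z|)^{p\rho-2(n-1)(\ell-1)+1}$ and the value $e^{\phi_{\max}}$, this gives $A\simeq(c^{1/\ell}|z|)^{p\rho-2n(\ell-1)}e^{c^2p|z|^{2\ell}/(2\alpha)}$ after simplifying the exponent. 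Taking $p$-th roots and combining with the $O(1)$ bound on the complement yields the estimate claimed in the lemma. The key technical obstacle is the uniformity of the saddle-point asymptotics in the parameter $c^{1/\ell}|z|$, especially in the transition region $c^{1/\ell}|z|\simeq 1$ where the sector and complementary contributions are of the same order; the closed-form expression above for $\phi_{\max}-\phi$ makes the Gaussian bounds quantitative enough to carry the patching through.
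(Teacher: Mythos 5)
Your proposal is correct and follows essentially the same route as the paper: reduce to one complex variable by a unitary rotation and integration over the orthogonal directions, split according to the sector $|\arg u|\le\pi/(2\ell)$, and exploit the completion of the square $\Re(cu^\ell|z|^\ell)-\tfrac{\alpha}{2}|u|^{2\ell}=\tfrac{c^2}{2\alpha}|z|^{2\ell}-\tfrac{\alpha}{2}\bigl|\tfrac{c}{\alpha}|z|^\ell-u^\ell\bigr|^2$, which is exactly your closed form for $\phi_{\max}-\phi$; the paper simply substitutes $\lambda=u^\ell$ to turn the sector integral into an exact two-dimensional Gaussian (estimated by an annulus splitting) rather than running a polar-coordinate saddle-point analysis. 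The only omission is the case $p=\infty$, which the integral-plus-$p$-th-roots formulation does not cover and which the paper handles separately by the analogous supremum computation with the same completion of squares.
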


\begin{proof}
Let $1\le p<\infty$. Given $z\in\C^n$, pick an unitary 
mapping $U_z$ on $\C^n$ which maps $z$ to 
$(|z|,0)\in\C\times\C^{n-1}$. Then making the change 
of variables $v=U_z w$ and integrating in polar 
coordinates (see {\cite[Lemma 2.9]{Ca-Fa-Pa}}
for a detailed proof of the second equivalence) we get  
\begin{align*}
\|\Phi_{c,z}\|_{L^{p}_{\alpha,\rho}}^p
&\simeq
\int_\C\varphi_c(|z|v_1)^p
\int_{\C^{n-1}}(1+|v_1|+|v'|)^{\rho p}\,
e^{-\frac{\alpha p}2(|v_1|^2+|v'|^2)^\ell} dV(v') dA(v_1)\\
&\simeq
\int_\C\varphi_c(|z|u)^p
(1+|u|)^{\rho p-2(n-1)(\ell-1)}\,
e^{-\frac{\alpha p}2|u|^{2\ell}} dA(u)\\
&=
\int_{\{|u|\ge 1, |\arg u|\le\frac \pi{2\ell}\}}
(1+|u|)^{\rho p-2(n-1)(\ell-1)}\,
\bigl| e^{c|z|^\ell u^\ell-\frac{\alpha }2|u|^{2\ell}}
\bigr|^p dA(u)\\
&\quad+
\int_{\{|u|< 1, |\arg u|\le\frac \pi{2\ell}\}}
(1+|u|)^{\rho p-2(n-1)(\ell-1)}\,\bigl| 
e^{c|z|^\ell u^\ell-\frac{\alpha }2|u|^{2\ell}}\bigr|^pdA(u)\\
&\quad +
\int_{\{|\arg u|>\frac \pi{2\ell}\}}
(1+|u|)^{\rho p-2(n-1)(\ell-1)}\,
e^{-\frac{\alpha p}2|u|^{2\ell}}dA(u)=:I_1+I_2+I_3.
\end{align*}
For $|\arg u|\le\frac{\pi}{2\ell}$, we have  
$
\Re(c u^\ell|z|^\ell)-\tfrac{\alpha}{2}|u|^{2\ell}=
\tfrac{c^2}{2\alpha}|z|^{2\ell}
-\tfrac{\alpha}2\bigl|\tfrac{c}{\alpha}|z|^\ell-u^\ell\bigr|^2.
$
Hence, the change $\lambda=u^\ell$ gives
\begin{align*}
I_1&=e^{\tfrac{c^2p}{2\alpha}|z|^{2\ell}}
\int_{\{|u|\ge 1, |\arg u|\le\frac \pi{2\ell}\}}
(1+|u|)^{\rho p-2(n-1)(\ell-1)}\, 
e^{-\tfrac{\alpha p}2\bigl|
\tfrac{c}{\alpha}|z|^\ell-u^\ell\bigr|^2}dA(u)\\
&\lesssim e^{\tfrac{c^2p}{2\alpha}|z|^{2\ell}}
\int_{\C}
(1+|\lambda|)^{\tfrac{\rho p-2n(\ell-1)}{\ell}}\,
 e^{-\tfrac{\alpha p}2
\bigl|\tfrac{c}{\alpha}|z|^\ell-\lambda\bigr|^2}dA(\lambda)\\
&\lesssim  (1+ |z|)^{\rho p-2n(\ell-1)}
e^{\frac{c^2 p}{2\alpha}|z|^{2\ell}}.
\end{align*}   
The proof of the last inequality for $|z|\le 1$ is clear. For $|z|>1$, splitting the integral over $\C$ as a sum of the integral on the set 
\[
A=\bigl\{\lambda\in\C: \tfrac{c}{2\alpha}|z|^\ell\le \bigl|\lambda\bigr|
\le\tfrac{2c}{\alpha}|z|^\ell\bigr\} 
\] 
and the integral on $\C\setminus A$, it is easy to check that  
$I_1\lesssim (1+|z|)^{\rho p-2n(\ell-1)}+e^{-\e |z|^{2\ell}}$
for some $\e>0$, which proves the result (see \cite[Lemma 2.10]{Ca-Fa-Pa} for more details).

The estimates of $I_2$ and $I_3$ are much easier. 
Clearly $I_3\lesssim 1$ and, since $|e^{c|z|^\ell u^\ell}|
\le e^{c|z|^\ell}$, for $|u|<1$, we also have 
$I_2\lesssim e^{cp|z|^\ell}$, 
which completes the case $p<\infty$.

Next assume $p=\infty$. 
In this case, arguing as above,
\begin{align*}
\|\Phi_{c,z}\|_{L^{\infty}_{\alpha,\rho}}
&\simeq
\sup_{v_1\in\C}\varphi_c(|z|v_1)\sup_{v'\in\C^{n-1}}
(1+|v_1|+|v'|)^{\rho }\,
e^{-\frac{\alpha }2(|v_1|^2+|v'|^2)^\ell}
\end{align*}
It is easy to check that  
\[
\sup_{v'\in\C^{n-1}}
(1+|v_1|+|v'|)^{\rho }\,
e^{-\frac{\alpha }2(|v_1|^2+|v'|^2)^\ell}
\simeq (1+|v_1|)^{\rho }\,e^{-\frac{\alpha }2|v_1|^{2\ell}},
\]
so $\|\Phi_{c,z}\|_{L^{\infty}_{\alpha,\rho}}
\simeq M_{c}(z)+L_{c}(z)$, 
where
\begin{align*}
M_{c}(z)&=\sup_{|\arg u|\le\frac{\pi}{2\ell}}\varphi_c(|z|u)
(1+|u|)^{\rho }\,e^{-\frac{\alpha }2|u|^{2\ell}},\\
L_{c}(z)&=\sup_{|\arg u|>\frac{\pi}{2\ell}}\varphi_c(|z|u)
(1+|u|)^{\rho }\,e^{-\frac{\alpha }2|u|^{2\ell}}.
\end{align*}

Now
\begin{align*}
M_c(z)
&\simeq e^{\tfrac{c^2}{2\alpha}|z|^{2\ell}} 
\sup_{|\arg \lambda|\le\frac{\pi}{2}}
(1+|\lambda|)^{\rho/\ell}\,
 e^{-\tfrac{\alpha}2\bigl|\tfrac{c}{\alpha}|z|^\ell-\lambda
\bigr|^2}\\
&= e^{\tfrac{c^2}{2\alpha}|z|^{2\ell}} 
\sup_{r>0}
(1+r)^{\rho/\ell}\,
 e^{-\tfrac{\alpha}2\bigl|\tfrac{c}{\alpha}|z|^\ell-r
\bigr|^2}.
\end{align*}
It is easy to check that the last supremum is equivalent
 to $(1+c^{1/\ell}|z|)^\rho$ 
(see for instance {\cite[Lemma 2.8]{Ca-Fa-Pa}}). 
Moreover, $L_c(z)\simeq 1$. Hence
\[
\|\Phi_{c,z}\|_{L^{\infty}_{\alpha,\rho}}\simeq  
(1+c^{1/\ell}|z|)^\rho e^{\tfrac{c^2}{2\alpha}|z|^{2\ell}}
 +1 \simeq (1+c^{1/\ell}|z|)^\rho 
 e^{\tfrac{c^2}{2\alpha}|z|^{2\ell}},
\]
which ends the proof.
\end{proof}

\begin{proof}[Proof of Theorem \ref{thm:LP}]
The proof of the theorem is a consequence of the  
following assertions:
\begin{enumerate}
\item[1)]The linear  operators 
$f\mapsto \partial_{z_j}f$ are bounded from 
$F^p_{\alpha,\rho}$ 
to $F^p_{\alpha,\rho+1-2\ell}$.
\item[2)] The linear operators 
\[
S_j(g)(z):=z_j \int_0^1g(tz)\, dt,\quad j=1,\cdots,n,
\]
are bounded from $F^p_{\alpha,\rho+1-2\ell}$ to 
$F^p_{\alpha,\rho}$.
\end{enumerate}
Taking for granted these results it is easy to prove the 
case  $k=1$. Indeed, assertion 1) shows that if 
$f\in F^{p}_{\alpha,\rho}$, then 
$|\nabla f|\in L^{p}_{\alpha,\rho+1-2\ell}$. 
Moreover,   the identity 
\[
f(z)=f(0)+\sum_{j=1}^n \int_0^1  z_j \partial_{z_j}f (tz)\, dt,\qquad f\in H(\C^n),
\]
together with assertion 2) give the converse.
Combining these results we have
\[
\|f\|_{F^p_{\alpha,\rho}}\simeq |f(0)|+ 
\|\nabla f\|_{L^p_{\alpha,\rho+1-2\ell}}
\simeq  |f(0)|
+\sum_{j=1}^n \|\partial_{z_j} f\|_{F^p_{\alpha,\rho+1-2\ell}}.
\]
Iterating this argument we prove the general case.

Next, we prove  the two assertions. 
We begin showing that the linear  operator 
$f\mapsto \partial_{z_j}f$ is bounded from 
$F^p_{\alpha,\rho}$ 
to $F^p_{\alpha,\rho+1-2\ell}$.
By interpolation 
(see Lemma \ref{lem:interpolation}) it is sufficient to 
prove this result 
for $p=1$ and $p=\infty$.

By Proposition  \ref{prop:Ponto}, $f=P_\alpha(f)$, so 
\[
\partial_{z_j}f(z)=\int_{\C^n}f(w)\,
\partial_{z_j}K_{\alpha}(z,w)\,e^{-\alpha|w|^{2\ell}}dV(w).
\]
Therefore Lemma \ref{lem:Bergmankernel} and 
Corollary \ref{cor:estimatesEab} imply 
\[
|\partial_{z_j}K_{\alpha}(z,w)|
\simeq\left|
\overline{w}_j\,E_{1/\ell,1/\ell}^{(n)}
(\alpha^{1/\ell}z\overline{w})\right|
\lesssim |w|(1+|z\overline w|)^{(n+1)(\ell-1)}
\varphi_\alpha(z\overline{w}),
\]
where $\varphi_\alpha$ is defined by \eqref{eqn:varphi}.
 Hence 
\[
|\partial_{z_j}f(z)|\lesssim 
\int_{\C^n}|f(w)| T_{\alpha}(z,w) 
 e^{-\alpha|w|^{2\ell}}dV(w),
\]
where 
$T_\alpha(z,w)
:=(1+|z|)^{(n+1)(\ell-1)}(1+|w|)^{(n+1)(\ell-1)+1}
\varphi_{\alpha}(z\overline{w})
$.
Thus
\begin{align*}
\|\partial_{z_j}f\|_{F^1_{\alpha,\rho-2\ell+1}}
&\lesssim \int_{\C^n} |f(w)| 
\|T_\alpha(\cdot,w)\|_{F^1_{\alpha,\rho-2\ell+1}} 
e^{-\alpha|w|^{2\ell}} dV(w),\\
\|\partial_{z_j}f\|_{F^\infty_{\alpha,\rho-2\ell+1}}
&\lesssim \|f\|_{F^\infty_{\alpha,\rho}}\sup_{z\in\C^n} 
(1+|z|)^{\rho-2\ell+1}e^{-\frac\alpha 2|z|^{2\ell}}
\|T_\alpha(z,\cdot)\|_{L^1_{\alpha,-\rho}}.
\end{align*}
 Now  Lemma \ref{lem:Phi} shows 
\begin{align*}
\|T_\alpha(\cdot,w)\|_{L^1_{\alpha,\rho-2\ell+1}}=
(1+|w|)^{n(\ell-1)+\ell}
\|\Phi_{\alpha,w}\|_{{L^1_{\alpha,\rho+n(\ell-1)-\ell}}}
\simeq (1+|w|)^{\rho}e^{\frac\alpha 2|w|^{2\ell}} 
\end{align*}
and
\begin{align*}
\|T_\alpha(z,\cdot)\|_{L^1_{\alpha,-\rho}}=
(1+|z|)^{(n+1)(\ell-1)}
\|\Phi_{\alpha,z}\|_{{L^1_{\alpha,-\rho+n(\ell-1)+\ell}}}
\simeq (1+|z|)^{-\rho+2\ell-1}e^{\frac\alpha 2|z|^{2\ell}}.
\end{align*}
Hence
$\|\partial_{z_j}f\|_{F^p_{\alpha,\rho-2\ell+1}}
\lesssim \|f\|_{F^p_{\alpha,\rho}}$ 
for $p=1$ and $p=\infty$.
Consequently, for any $p$, we have 
\[
 |f(0)|+ 
\|\nabla f\|_{L^p_{\alpha,\rho+1-2\ell}}\lesssim 
\|f\|_{F^p_{\alpha,\rho}}.
\]

To complete the proof we show that the operators $S_j$ 
map $F^p_{\alpha,\rho+1-2\ell}$ to $F^p_{\alpha,\rho}$, 
$p=1$ and $p=\infty$. 
Indeed, 
 Proposition  \ref{prop:Ponto} gives
\[
S_j(g)(z)=
z_j\int_0^1\int_{\C^n}g
(w)K_\alpha(tz,w)e^{-\alpha|w|^{2\ell}}\,dV(w)\, dt.
\]
Therefore Proposition \ref{prop:pqnormBergman} gives
\begin{align*}
\|S_j(g)\|_{F^1_{\alpha,\rho}}&\le
\int_0^1\int_{\C^n}|g(w)| 
\|K_\alpha(\cdot,tw)\|_{F^1_{\alpha,\rho+1}} 
e^{-\alpha|w|^{2\ell}}\,dV(w)\, dt\\
&\lesssim \int_{\C^n}|g(w)|  e^{-\alpha|w|^{2\ell}}\,
\int_0^1(1+t|w|)^{\rho+1}\,e^{\frac{\alpha}2(t|w|)^{2\ell}}\,
dt\,dV(w),
\end{align*}
and
\begin{align*}
\|S_j(g)\|_{F^\infty_{\alpha,\rho}}&\lesssim
\|g\|_{L^\infty_{\alpha,\rho+1-2\ell}}\sup_z 
(1+|z|)^{\rho+1}e^{-\frac{\alpha}2|z|^{2\ell}}
\int_0^1 \|K_\alpha(\cdot,tz)\|_{F^1_{\alpha,2\ell-\rho-1}} \, 
dt\\
&\lesssim \|g\|_{L^\infty_{\alpha,\rho+1-2\ell}}\sup_z 
(1+|z|)^{\rho+1}e^{-\frac{\alpha}2|z|^{2\ell}}
\int_0^1(1+t|z|)^{2\ell-\rho-1}\,
e^{\frac{\alpha}{2}(t|z|)^{2\ell}}\,dt.
\end{align*}
Therefore, the norm estimates  
$\|S_j(g)\|_{F^1_{\alpha,\rho}}
\lesssim \|g\|_{F^1_{\alpha,\rho+1-2\ell}}$
 and $\|S_j(g)\|_{F^\infty_{\alpha,\rho}}
\lesssim \|g\|_{F^\infty_{\alpha,\rho+1-2\ell}}$ 
follow from 
\[
\int_0^1\,(1+ta)^\tau \,e^{(ta)^{2\ell}}\, dt\le c_\tau
(1+a)^{\tau-2\ell} \,e^{a^{2\ell}} \quad(a>0),
\]
which can be easily derived by splitting the integral as a 
sum of the integrals from $0$ to $1/2$ and from $1/2$ 
and $1$.

Altogether gives that 
\[\|f\|_{F^p_{\alpha,\rho}}\lesssim
 |f(0)|+ 
\|\nabla f\|_{L^p_{\alpha,\rho+1-2\ell}},
\]
which ends the proof of Theorem \ref{thm:LP}.
\end{proof}

\vspace{.5cm}

As a consequence, we deduce the following result that will be used in the next section.

\begin{cor}
\label{cor:diffzw}
Let $f\in E(\C)$ and let 
$k=0,1,\cdots$. Then:
\begin{enumerate}
\item \label{item:diffzw1}
There exists $\tau=\tau(k)>0$ such that 
$|f^{(k)}(w \overline z)|
=O\bigl(e^{\tau|z|^\ell|w|^\ell}\bigr)$.
In particular,  $f^{(k)}(\cdot\,\overline z)\in E(\C^n)$, 
for every $z\in\C^n$.
\item \label{item:diffzw2}
For any $1\le p\le\infty$ we have 
\[
\|f^{(k)}(\cdot\,\overline z)\|_{F^{p}_{\alpha,\rho-k(2\ell-1)}}
\lesssim 
 (1+|z|)^{-k}\|f(\cdot\,\overline z)\|_{F^{p}_{\alpha,\rho}}
\qquad(|z|\ge 1).
\]
\end{enumerate}
\end{cor}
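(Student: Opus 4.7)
The plan is to prove \eqref{item:diffzw1} by a direct Cauchy-estimate argument on the growth of derivatives, and \eqref{item:diffzw2} by applying the Littlewood-Paley formula of Theorem \ref{thm:LP} to the function $g_z(w):=f(w\overline{z})$.

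For \eqref{item:diffzw1}, since $f\in E(\C)$, fix $\tau_0>0$ such that $|f(\lambda)|\lesssim e^{\tau_0|\lambda|^\ell}$ on $\C$. Cauchy's formula on the unit circle around $\lambda$ gives
\[
|f^{(k)}(\lambda)|\le k!\,\sup_{|\mu-\lambda|=1}|f(\mu)|\lesssim e^{\tau_0(|\lambda|+1)^\ell}.
\]
Since $\ell\ge 1$, for any $\tau>\tau_0$ there is a constant $C=C(\tau,\tau_0,\ell)$ with $\tau_0(|\lambda|+1)^\ell\le\tau|\lambda|^\ell+C$, so $|f^{(k)}(\lambda)|\lesssim e^{\tau|\lambda|^\ell}$. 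Substituting $\lambda=w\overline{z}$ and using $|w\overline{z}|^\ell=|w|^\ell|z|^\ell$ yields both the claimed $O$-estimate and the fact that, for fixed $z$, the function $w\mapsto f^{(k)}(w\overline{z})$ belongs to $E(\C^n)$.

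For \eqref{item:diffzw2}, the key observation is that by the chain rule,
\[
\partial^\nu g_z(w)=\overline{z}^\nu f^{(|\nu|)}(w\overline{z})\qquad(\nu\in\N^n).
\]
Hence
\[
|\nabla^k g_z(w)|=|f^{(k)}(w\overline{z})|\sum_{|\nu|=k}|z|^\nu\simeq |z|^k\,|f^{(k)}(w\overline{z})|,
\]
where the comparison uses that $\sum_{|\nu|=k}|z|^\nu$ is a sum of monomials equivalent to $(|z_1|+\cdots+|z_n|)^k\simeq |z|^k$ with constants depending only on $n$ and $k$. By part \eqref{item:diffzw1}, $g_z\in E(\C^n)$, so we may apply Theorem \ref{thm:LP} to $g_z$. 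The lower bound in the Littlewood-Paley equivalence gives
\[
\|\nabla^k g_z\|_{L^p_{\alpha,\rho-k(2\ell-1)}}\lesssim \|g_z\|_{F^p_{\alpha,\rho}}=\|f(\cdot\,\overline{z})\|_{F^p_{\alpha,\rho}},
\]
and combining with the pointwise identity above yields
\[
|z|^k\,\|f^{(k)}(\cdot\,\overline{z})\|_{L^p_{\alpha,\rho-k(2\ell-1)}}\lesssim \|f(\cdot\,\overline{z})\|_{F^p_{\alpha,\rho}}.
\]
Since $f^{(k)}(\cdot\,\overline{z})$ is holomorphic the $L^p$ norm equals the $F^p$ norm, and for $|z|\ge 1$ we have $|z|^{-k}\simeq(1+|z|)^{-k}$, finishing the argument.

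No step presents a real obstacle: the mild technical point is the comparison $\sum_{|\nu|=k}|z|^\nu\simeq |z|^k$, which is immediate from equivalence of norms on the finite-dimensional space of homogeneous polynomials of degree $k$ in $|z_1|,\ldots,|z_n|$. Everything else is a direct invocation of Theorem \ref{thm:LP} together with the chain rule.
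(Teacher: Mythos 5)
Your proof is correct and rests on the same key ingredient as the paper's, namely the Littlewood--Paley formula of Theorem~\ref{thm:LP}, but the execution of part (ii) differs in a clean way. The paper first applies a unitary rotation $U_z$ taking $z$ to $(|z|,0)$ so that $f(\cdot\,\overline{z})$ becomes a function of the first coordinate alone, and then differentiates in that single variable (this tacitly uses the unitary invariance of the weight and the measure). You instead compute the full gradient of $g_z=f(\cdot\,\overline{z})$ directly via the chain rule, $\partial^\nu g_z=\overline{z}^\nu f^{(|\nu|)}(\cdot\,\overline{z})$, and note $\sum_{|\nu|=k}|\overline{z}^\nu|\simeq|z|^k$, which achieves the same reduction without the rotation; this is arguably more transparent and keeps the argument coordinate-free. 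Your part (i) is a standard Cauchy-estimate verification of the fact the paper states without proof, that $E(\C)$ is closed under differentiation. The only slip is cosmetic: by Cauchy--Schwarz, $|w\overline{z}|\le|w||z|$ rather than $=$, hence $|w\overline{z}|^\ell\le|w|^\ell|z|^\ell$; since the inequality is all you use, the conclusion is unaffected.
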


\begin{proof}
Let us begin by observing that if $f\in E(\C)$, then 
$f^{(k)}\in E(\C)$. This proves \eqref{item:diffzw1}.

For $z\ne 0$, pick a unitary mapping $U_z$ which
 maps $z$ to $(|z|,0)\in\C\times\C^{n-1}$. 
Then making the change of variables $v=U_z w$ and
 defining $g_z(v_1,v')=f(|z|v_1)$, Theorem \ref{thm:LP}
 gives
\begin{align*} 
\|f^{(k)}(\cdot\,\overline z)\|_{F^{p}_{\alpha,\rho-k(2\ell-1)}}
&
=|z|^{-k} \left\|\tfrac{\partial^{k}g_z}{\partial v_1^k}
\right\|_{F^{p}_{\alpha,\rho-k(2\ell-1)}}\\
&\lesssim|z|^{-k} \left\|g_z\right\|_{F^{p}_{\alpha,\rho}}
=|z|^{-k}\left\|f(\cdot\,\overline{z})\right\|_{F^{p}_{\alpha,\rho}}. 
 \qedhere
\end{align*}
\end{proof}

\section{Proof of Theorem \ref{thm:decompK}}
\label{sec:proof12}

From the asymptotic expansion \eqref{eqn:Eab}
it is easy to check the following result. 
\begin{lem} 
 For $0<\theta<1$ there exists $R_{\ell,\theta}\in H(\C)$ 
such that 
\begin{equation}\label{eqn:factorizationE}
E_{1/\ell,1/\ell}(\lambda)=c_{\ell,\theta}
E_{\frac 1\ell,\frac{\ell+1}{2\ell}}(\theta^{1/\ell}\lambda)\,
E_{\frac1\ell,\frac{\ell+1}{2\ell}}
((1-\theta)^{1/\ell}\lambda)\,
+R_{\ell,\theta}(\lambda),
\end{equation}
where $c_{\ell,\theta}=
\frac{(\theta(1-\theta))^{\frac{1-\ell}{2\ell}}}{\ell}$.
Moreover, by \eqref{eqn:estimatesEab},
\begin{align}
\label{eqn:factorizationE1}
|E_{\frac 1\ell,\frac{\ell+1}{2\ell}} 
(\theta^{1/\ell}\lambda)|
&\lesssim (1+|\lambda|)^{\frac{\ell-1}{2}}
\varphi_\theta(\lambda),\\
\label{eqn:factorizationE2}
|E_{\frac 1\ell,\frac{\ell+1}{2\ell}} 
((1-\theta)^{1/\ell}\lambda)|
&\lesssim (1+|\lambda|)^{\frac{\ell-1}{2}} 
\varphi_{1-\theta}(\lambda),\\
\label{eqn:factorizationR}
|R_{\ell,\theta}(\lambda)|
&\lesssim (1+|\lambda|)^{\frac{\ell-3}{2}}
(\varphi_\theta(\lambda)+\varphi_{1-\theta}(\lambda),
\end{align}
where $\varphi_{c}$ is the function defined by 
 \eqref{eqn:varphi}.
\end{lem}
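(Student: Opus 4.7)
The strategy is to \emph{define} $R_{\ell,\theta}$ by the identity \eqref{eqn:factorizationE}, so that it is automatically an entire function of $\lambda$. The estimates \eqref{eqn:factorizationE1} and \eqref{eqn:factorizationE2} then follow immediately from Corollary \ref{cor:estimatesEab} applied with $b=(\ell+1)/(2\ell)$ and $m=0$, since the exponent $(1-b)\ell$ collapses to $(\ell-1)/2$. Because $\theta^{1/\ell}>0$ is real we have $\arg(\theta^{1/\ell}\lambda)=\arg\lambda$, and a direct inspection of \eqref{eqn:varphi} gives $\varphi_1(\theta^{1/\ell}\lambda)=\varphi_\theta(\lambda)$; the same argument works for $1-\theta$.

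The heart of the proof is \eqref{eqn:factorizationR}, which I would handle by splitting $\C$ into the two overlapping sectors supplied by \eqref{eqn:Eab}, namely $\Sigma_1=\{|\arg\lambda|\le 7\pi/(8\ell)\}$ and $\Sigma_2=\{|\arg\lambda|\ge 5\pi/(8\ell)\}$. On $\Sigma_1$ the exponential asymptotic yields
\begin{align*}
E_{1/\ell,1/\ell}(\lambda)&=\ell\,\lambda^{\ell-1}e^{\lambda^\ell}+O(\lambda^{-1}),\\
E_{1/\ell,(\ell+1)/(2\ell)}(\theta^{1/\ell}\lambda)&=\ell\,\theta^{(\ell-1)/(2\ell)}\lambda^{(\ell-1)/2}e^{\theta\lambda^\ell}+O(\lambda^{-1}),
\end{align*}
and the analogous expansion with $\theta$ replaced by $1-\theta$. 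Multiplying the last two expansions and using $\theta+(1-\theta)=1$ in the exponent produces a main term $\ell^2(\theta(1-\theta))^{(\ell-1)/(2\ell)}\lambda^{\ell-1}e^{\lambda^\ell}$, and the choice of $c_{\ell,\theta}$ is precisely what ensures that $c_{\ell,\theta}$ times this main term coincides with $\ell\lambda^{\ell-1}e^{\lambda^\ell}$, cancelling the leading term of $E_{1/\ell,1/\ell}(\lambda)$. The surviving cross terms in $R_{\ell,\theta}(\lambda)$ are then of order $\lambda^{(\ell-3)/2}e^{\theta\lambda^\ell}$, $\lambda^{(\ell-3)/2}e^{(1-\theta)\lambda^\ell}$, and $\lambda^{-2}$, each of which is dominated by $(1+|\lambda|)^{(\ell-3)/2}(\varphi_\theta(\lambda)+\varphi_{1-\theta}(\lambda))$ once one observes that $|e^{c\lambda^\ell}|\le\varphi_c(\lambda)$ throughout $\C$ (tautological on $|\arg\lambda|\le\pi/(2\ell)$; elsewhere $\Re(\lambda^\ell)\le 0$, making the exponential bounded by $1$).

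On $\Sigma_2$ the second branch of \eqref{eqn:Eab} gives $O(\lambda^{-1})$ for each of the three Mittag-Leffler factors (their arguments differ from $\arg\lambda$ by positive real scaling), so $R_{\ell,\theta}(\lambda)=O(\lambda^{-1})+O(\lambda^{-2})=O(\lambda^{-1})$, which is controlled by $(1+|\lambda|)^{(\ell-3)/2}$ for $\ell\ge 1$ and hence by the right-hand side of \eqref{eqn:factorizationR} since $\varphi_\theta,\varphi_{1-\theta}\ge 1$. The behaviour near $\lambda=0$ is automatic because $R_{\ell,\theta}\in H(\C)$. The main bookkeeping obstacle is tracking the exponents through Corollary \ref{cor:estimatesEab} carefully enough that the leading terms really do cancel with the stated constant $c_{\ell,\theta}$; once the identity $(\theta(1-\theta))^{(\ell-1)/(2\ell)}\,c_{\ell,\theta}=1/\ell$ is verified the rest is a routine asymptotic computation.
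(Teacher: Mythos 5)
Your proof is correct and follows the same approach the paper implicitly takes: the paper merely asserts that the lemma follows from the asymptotic expansion \eqref{eqn:Eab}, and your argument supplies precisely the cancellation of leading terms (with the check that $c_{\ell,\theta}\ell^2(\theta(1-\theta))^{(\ell-1)/(2\ell)}=\ell$) and the sector-by-sector bound on the remainder that the authors have in mind. The reduction of \eqref{eqn:factorizationE1}--\eqref{eqn:factorizationE2} to Corollary \ref{cor:estimatesEab} via $\varphi_1(\theta^{1/\ell}\lambda)=\varphi_\theta(\lambda)$ is also the intended route.
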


For $\ell=1$ the identity \eqref{eqn:factorizationE} 
reduces to 
$e^\lambda=e^{\theta\lambda}e^{(1-\theta)\lambda}$ 
and $R_{1,\theta}=0$.

\begin{cor}\label{cor:factorizationK}
Let  $\theta\in (0,1)$ and let $\tilde\theta=1-\theta$. Then
\begin{align*}
K_\gamma(z,w)&=C_{\ell,\gamma,\theta}
\sum_{k=0}^{n-1}\mbox{$\binom{n-1}{k}$}
\theta^{\frac k\ell}
 E^{(k)}_{\frac{1}{\ell},\frac{\ell+1}{2\ell}}
(\left(\theta\gamma\right)^{1/\ell}z\overline{w})
\tilde\theta^{\frac{n-1-k}\ell}
E^{(n-1-k)}_{\frac{1}{\ell},\frac{\ell+1}{2\ell}}
(\tilde\theta\gamma)^{\frac 1\ell}z\overline{w})\\
&+\frac{\ell\gamma^{n/\ell}}{n!}
R^{(n-1)}_{\ell,\theta}(\gamma^{\frac 1\ell}z\overline{w}),
\end{align*}
where $C_{\ell,\gamma,\theta}
=\frac{\ell\gamma^{n/\ell}c_{\ell,\theta}}{n!}$.
\end{cor}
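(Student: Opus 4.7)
The plan is straightforward: combine Lemma \ref{lem:Bergmankernel} with the factorization \eqref{eqn:factorizationE}, after taking $(n-1)$ derivatives via the Leibniz rule. There is no analytic difficulty; the entire exercise is bookkeeping of constants coming from the chain rule.

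\medskip

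First I would recall that, by Lemma \ref{lem:Bergmankernel},
\[
K_\gamma(z,w)=\frac{\ell\gamma^{n/\ell}}{n!}\,
E^{(n-1)}_{1/\ell,1/\ell}(\gamma^{1/\ell}z\overline{w}),
\]
so it suffices to compute $E^{(n-1)}_{1/\ell,1/\ell}(\lambda)$ at $\lambda=\gamma^{1/\ell}z\overline w$ and then multiply by $\ell\gamma^{n/\ell}/n!$. The identity \eqref{eqn:factorizationE} reads
\[
E_{1/\ell,1/\ell}(\lambda)=c_{\ell,\theta}\,
E_{\frac1\ell,\frac{\ell+1}{2\ell}}(\theta^{1/\ell}\lambda)\,
E_{\frac1\ell,\frac{\ell+1}{2\ell}}(\tilde\theta^{1/\ell}\lambda)
+R_{\ell,\theta}(\lambda),
\]
with $\tilde\theta=1-\theta$, so I would differentiate $n-1$ times in $\lambda$.

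\medskip

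The second step is the Leibniz rule: writing $A(\lambda)=E_{\frac1\ell,\frac{\ell+1}{2\ell}}(\theta^{1/\ell}\lambda)$ and $B(\lambda)=E_{\frac1\ell,\frac{\ell+1}{2\ell}}(\tilde\theta^{1/\ell}\lambda)$, the chain rule gives $A^{(k)}(\lambda)=\theta^{k/\ell}E^{(k)}_{\frac1\ell,\frac{\ell+1}{2\ell}}(\theta^{1/\ell}\lambda)$ and similarly for $B$, so
\[
(AB)^{(n-1)}(\lambda)=\sum_{k=0}^{n-1}\binom{n-1}{k}
\theta^{k/\ell}\,\tilde\theta^{(n-1-k)/\ell}
E^{(k)}_{\frac1\ell,\frac{\ell+1}{2\ell}}(\theta^{1/\ell}\lambda)\,
E^{(n-1-k)}_{\frac1\ell,\frac{\ell+1}{2\ell}}(\tilde\theta^{1/\ell}\lambda).
\]
Adding $R^{(n-1)}_{\ell,\theta}(\lambda)$, this yields an explicit formula for $E^{(n-1)}_{1/\ell,1/\ell}(\lambda)$.

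\medskip

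Finally I would substitute $\lambda=\gamma^{1/\ell}z\overline w$, noting that $\theta^{1/\ell}\cdot\gamma^{1/\ell}=(\theta\gamma)^{1/\ell}$ and $\tilde\theta^{1/\ell}\cdot\gamma^{1/\ell}=(\tilde\theta\gamma)^{1/\ell}$, and multiply by $\ell\gamma^{n/\ell}/n!$. Setting $C_{\ell,\gamma,\theta}=\ell\gamma^{n/\ell}c_{\ell,\theta}/n!$, the main sum becomes exactly the expression in the statement, and the remainder term contributes $\frac{\ell\gamma^{n/\ell}}{n!}R^{(n-1)}_{\ell,\theta}(\gamma^{1/\ell}z\overline w)$. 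The only thing to verify is that each constant matches; this is entirely mechanical, and constitutes the "main obstacle" only in the sense of avoiding an arithmetic slip with the factors $\theta^{k/\ell}$ distributed between the combinatorial coefficient and the argument of the Mittag-Leffler derivative.
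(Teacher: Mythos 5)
Your proof is correct and follows exactly the (implicit) argument the paper intends: apply $\partial_\lambda^{n-1}$ to the scalar identity \eqref{eqn:factorizationE} via the Leibniz rule, then substitute $\lambda=\gamma^{1/\ell}z\overline w$ and multiply by $\ell\gamma^{n/\ell}/n!$ using Lemma \ref{lem:Bergmankernel}. The bookkeeping of the factors $\theta^{k/\ell}$ and $\tilde\theta^{(n-1-k)/\ell}$ from the chain rule, and the absorption of $c_{\ell,\theta}$ into $C_{\ell,\gamma,\theta}$, all check out.
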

Observe that if $\ell=1$, and 
$\theta=\frac{\alpha}{\alpha+\beta}$ this decomposition 
is just  \eqref{eqn:kn1}.

In order to prove \eqref{eqn:decompK1} 
we introduce the following definitions.
For  $k=0,\cdots,n-1$, let
\begin{align}
\label{eqn:GH1}
G_{k,\gamma,\theta}(\lambda)
&:=\mbox{$\binom{n-1}{k}$}\theta^{\frac k\ell} 
\frac{\ell\gamma^{n/\ell}c_{\ell,\theta}}{n!}
 E^{(k)}_{\frac{1}{\ell},\frac{\ell+1}{2\ell}}
(\left(\theta\gamma\right)^{1/\ell}\lambda),\\
\label{eqn:GH2}
H_{k,\gamma,\theta}(\lambda)
&:=(1-\theta)^{\frac{n-1-k}\ell}
 E^{(n-1-k)}_{\frac{1}{\ell},\frac{\ell+1}{2\ell}}
(\left((1-\theta)\gamma\right)^{1/\ell}\lambda),\\
\label{eqn:R2}
R_{n,\gamma,\theta}(\lambda)
&:=\frac{\ell\gamma^{n/\ell}}{n!}R^{(n-1)}_{\ell,\theta}
(\gamma^{\frac 1\ell}\lambda).
\end{align}

We claim that:
\begin{prop}\label{prop:claims}
Let $\gamma>0$ and $\theta\in (0,1)$. Then, 
for any $z\in\C^n$, the functions 
$G_{k,\gamma,\theta}(\cdot\,\overline{z}),
\,H_{k,\gamma,\theta}(\cdot\,\overline{z})$, 
$k=0,\cdots,n-1$, and 
$R_{n,\gamma,\theta}(\cdot\,\overline{z})$
belong to $E$. Moreover, for  $1\le p\le\infty$, $\alpha>0$ 
and $\rho\in\R$ we have 
\begin{align}
\label{claim:E1}
& \|G_{k,\gamma,\theta}(\cdot\,\overline{z})\|
_{F^p_{\alpha,\rho}}
\lesssim 
(1+|z|)^{\rho+(\ell-1)(2k+1-2n/p)}
e^{\frac{\theta^2\gamma^2}{2\alpha }|z|^{2\ell}},
\,k=0,\cdots,n-1,\\
\label{claim:R1}
&\|R_{n,\gamma,\theta}(\cdot\,\overline{z})\|
_{F^p_{\alpha,\rho}}
\lesssim 
(1+|z|)^{\rho+(\ell-1)(2n/p'-1)}\bigl(
e^{\frac{\theta^2\gamma^2}{2\alpha }|z|^{2\ell}}+
e^{\frac{(1-\theta)^2\gamma^2}{2\alpha }|z|^{2\ell}}
\bigr).
\end{align}
\end{prop}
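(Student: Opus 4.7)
My plan is three-fold, organized by the three functions whose norms must be controlled. For the $E(\C^n)$-membership, \eqref{eqn:estimatesEab} and \eqref{eqn:factorizationR} already exhibit $G_{k,\gamma,\theta}$, $H_{k,\gamma,\theta}$ and $R_{\ell,\theta}$ as entire functions of order $\ell$ and finite type on $\C$; derivatives inherit this growth by Cauchy's formula, so $R_{n,\gamma,\theta}\in E(\C)$ as well, and Corollary \ref{cor:diffzw}\eqref{item:diffzw1} then lifts this to $E(\C^n)$-membership of the compositions with $w\mapsto w\overline z$. Throughout the proof of the two norm estimates I focus on $|z|\ge 1$, since $|z|\le 1$ is trivial (norms uniformly bounded by the $E(\C)$-membership, right-hand sides bounded below by a positive constant).

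For \eqref{claim:E1}, the pointwise bound \eqref{eqn:estimatesEab} with $m=k$, $b=(\ell+1)/(2\ell)$ and the scaling identity $\varphi_1((\theta\gamma)^{1/\ell}\lambda)=\varphi_{\theta\gamma}(\lambda)$ give
\[
|G_{k,\gamma,\theta}(w\overline z)|\lesssim(1+|w||z|)^{(2k+1)(\ell-1)/2}\,\varphi_{\theta\gamma}(w\overline z).
\]
Since this polynomial exponent is nonnegative, I separate variables via $(1+|w||z|)^a\le(1+|w|)^a(1+|z|)^a$ and apply Lemma \ref{lem:Phi} with $c=\theta\gamma$ and weight parameter $\rho+(2k+1)(\ell-1)/2$; the two contributions of $(2k+1)(\ell-1)/2$ and the $-2n(\ell-1)/p$ coming from Lemma \ref{lem:Phi} combine into the predicted exponent $(\ell-1)(2k+1-2n/p)$.

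For \eqref{claim:R1}, I write $R_{n,\gamma,\theta}(\lambda)=\tfrac{\ell\gamma^{1/\ell}}{n!}\,\tilde R^{(n-1)}(\lambda)$ with $\tilde R(\lambda):=R_{\ell,\theta}(\gamma^{1/\ell}\lambda)\in E(\C)$. Corollary \ref{cor:diffzw}\eqref{item:diffzw2} applied with $k=n-1$ and source weight $\sigma:=\rho+(n-1)(2\ell-1)$ absorbs the $n-1$ derivatives into a decay factor, giving, for $|z|\ge 1$,
\[
\|R_{n,\gamma,\theta}(\cdot\,\overline z)\|_{F^p_{\alpha,\rho}}\lesssim(1+|z|)^{-(n-1)}\,\|\tilde R(\cdot\,\overline z)\|_{F^p_{\alpha,\sigma}}.
\]
From \eqref{eqn:factorizationR} and the same scaling identity one gets $|\tilde R(w\overline z)|\lesssim(1+|w||z|)^{(\ell-3)/2}\bigl(\varphi_{\theta\gamma}(w\overline z)+\varphi_{(1-\theta)\gamma}(w\overline z)\bigr)$. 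I handle the polynomial prefactor by cases: $(1+|w||z|)^{(\ell-3)/2}\le 1$ when $\ell\le 3$, and $\le(1+|w|)^{(\ell-3)/2}(1+|z|)^{(\ell-3)/2}$ when $\ell>3$. After this, Lemma \ref{lem:Phi} applied separately to the two $\varphi$-terms recovers the sum of exponentials in \eqref{claim:R1}, and a direct arithmetic check shows that in both cases the resulting exponent of $(1+|z|)$ is no larger than $\rho+(\ell-1)(2n/p'-1)$.

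The main obstacle is precisely this arithmetic bookkeeping for \eqref{claim:R1}: one must correctly combine the contributions $-(n-1)$ and the weight shift $(n-1)(2\ell-1)$ from Corollary \ref{cor:diffzw}\eqref{item:diffzw2}, the sign-dependent $(\ell-3)/2$ from \eqref{eqn:factorizationR}, and the $-2n(\ell-1)/p$ from Lemma \ref{lem:Phi}. The delicate case is $\ell<3$, where the clean split $(1+|w||z|)^a\le(1+|w|)^a(1+|z|)^a$ fails for $a<0$ and one must instead drop the prefactor via $(1+|w||z|)^{(\ell-3)/2}\le 1$, producing a sub-optimal polynomial exponent; this sub-optimality is harmless because for $|z|\ge 1$ a smaller exponent of $(1+|z|)$ only yields a stronger upper bound.
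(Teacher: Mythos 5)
Your proof is correct and follows essentially the same route as the paper's: dispose of $|z|\le 1$ via $E(\C)$-membership, reduce the case $|z|\ge 1$ to pointwise Mittag-Leffler estimates plus Corollary \ref{cor:diffzw}\eqref{item:diffzw2}, and conclude with Lemma \ref{lem:Phi}. The only (harmless) variations are that for \eqref{claim:E1} you apply the derivative estimate \eqref{eqn:estimatesEab} with $m=k$ directly instead of first removing the $k$ derivatives via Corollary \ref{cor:diffzw}\eqref{item:diffzw2} and then bounding the underived function, and for \eqref{claim:R1} you keep the sharper exponent $(\ell-3)/2$ from \eqref{eqn:factorizationR} (with a sign case-split) where the paper simply weakens it to $(\ell-1)/2$ to treat all $\ell\ge1$ uniformly; in both variants the resulting exponent of $(1+|z|)$ is at most the claimed one, as you verify.
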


 Observe that replacing $G_{k,\gamma,\theta}$ by 
$H_{k,\gamma,\theta}$ and $p,\alpha,\rho,\theta, k$ by 
$p',\beta,\eta,1-\theta,n-1-k$, respectively, we obtain
\begin{align}
\label{claim:E2}
 \|H_{k,\gamma,\theta}(\cdot\,\overline{z})\|
_{F^{p'}_{\beta,\eta}}
&\lesssim 
(1+|z|)^{\eta+(\ell-1)(2(n-1-k)+1-2n/{p'})}
e^{\frac{(1-\theta)^2\gamma^2}{2\beta }|z|^{2\ell}},\\
\label{claim:R2}
\|R_{n,\gamma,\theta}(\cdot\,\overline{z})\|
_{F^{p'}_{\beta,\eta}}
&\lesssim 
(1+|z|)^{\eta+(\ell-1)(2n/p-1)}\bigl(
e^{\frac{\theta^2\gamma^2}{2\beta }|z|^{2\ell}}+
e^{\frac{(1-\theta)^2\gamma^2}{2\beta}|z|^{2\ell}}
\bigr).
\end{align}

Taking for granted these estimates, we conclude the 
proof of Theorem \ref{thm:decompK}.

We first state the following definition.

\begin{defn}\label{defn:GHindecom}
For $\alpha, \beta, \gamma>0$ and $k=0,\cdots,n-1$, we define the following entire functions on $\C$ given by:
\begin{align*}
&G_k(\lambda)
:=G_{k,\gamma,\frac{\alpha}{\alpha+\beta}}(\lambda), 
&&H_k(\lambda)
:=H_{k,\gamma,\frac{\alpha}{\alpha+\beta}}(\lambda),
&&&
\\
&G_{n}(\lambda)
:=R_{n,\gamma,\frac{\alpha}{\alpha+\beta}}(\lambda), 
&& H_{n}(\lambda):=1, 
&&& \text{if $\alpha\ge\beta$,}\\
&G_{n}(\lambda):=1,
&&H_{n}(\lambda)
:=R_{n,\gamma,\frac{\alpha}{\alpha+\beta}}(\lambda), 
&&&\text{if $\alpha<\beta$.}
\end{align*}
\end{defn}

\begin{proof}[Proof of Theorem \ref{thm:decompK}]
By Corollary \ref{cor:factorizationK}, it is clear that the 
functions $G_k$ and $H_k$ in Definition \ref{defn:GHindecom} satisfy equation
\eqref{eqn:decompK1}. 

Next we prove \eqref{eqn:decompK2}.
By Proposition \ref{prop:pqnormBergman} and 
H\"older's inequality we have
\[
(1+|z|)^{\rho+\eta} 
e^{\frac{\gamma^2}{2(\alpha+\beta)}|z|^{2\ell}}
\simeq
\|K_\gamma(\cdot,z)\|_{F^1_{\alpha+\beta,\rho+\eta}}
\lesssim \sum_{k=0}^n
 \|G_{k,\gamma,\theta}(\cdot\overline{z})\|
_{F^p_{\alpha,\rho}}
\|H_{k,\gamma,\theta}(\cdot\overline{z})\|
_{F^{p'}_{\beta,\eta}}.
\]
By \eqref{claim:E1} and \eqref{claim:E2}, 
\[
\sum_{k=0}^{n-1}
 \|G_{k,\gamma,\theta}(\cdot\,\overline{z})\|
_{F^p_{\alpha,\rho}}
\|H_{k,\gamma,\theta}(\cdot\,\overline{z})\|
_{F^{p'}_{\beta,\eta}}
\lesssim (1+|z|)^{\rho+\eta}
e^{\frac{\gamma^2}{2 } \psi(\theta)|z|^{2\ell}},
\]
where 
 $\psi(\theta)=\frac{\theta^2}{\alpha }+
\frac{(1-\theta)^2}{\beta }$. 
Since  
$\psi(\theta)\ge\psi\bigl(\frac{\alpha}{\alpha+\beta}\bigr)
=\frac1{\alpha+\beta}$,
\[
\sum_{k=0}^{n-1}
 \|G_{k}(\cdot\,\overline{z})\|_{F^p_{\alpha,\rho}}
\|H_{k}(\cdot\,\overline{z})\|_{F^{p'}_{\beta,\eta}}
\lesssim (1+|z|)^{\rho+\eta}
e^{\frac{\gamma^2}{2(\alpha+\beta) }|z|^{2\ell}}.
\]

Assume $\alpha\ge \beta$. Now  
\eqref{claim:R1}, with 
$\theta=\frac{\alpha}{\alpha+\beta}$,
 shows that 
\begin{align*}
 \|G_{n}(\cdot\,\overline{z})\|_{F^p_{\alpha,\rho}}&
\|H_{n}(\cdot\,\overline{z})\|_{F^{p'}_{\beta,\eta}}\\
&\lesssim 
(1+|z|)^{\rho+(\ell-1)(2n/p'-1)}\bigl(
e^{\frac{\alpha\gamma^2}{2(\alpha +\beta)^2}|z|^{2\ell}}+
e^{\frac{\beta^2\gamma^2}
{2\alpha(\alpha +\beta)^2 }|z|^{2\ell}}
\bigr)\\
&\lesssim (1+|z|)^{\rho+\eta}
e^{\frac{\gamma^2}{2(\alpha+\beta) }|z|^{2\ell}}.
\end{align*}
By using \eqref{claim:R2} we obtain the same estimate  
for $\alpha<\beta$.
\end{proof}

For further references, we consider convenient to state 
 the following more precise  version of  
Theorem \ref{thm:decompK}, which provides an explicit 
decomposition of the Bergman kernel $K_\gamma$ with 
norm-estimates of the factors.

\begin{thm}\label{thm:decompK2}
Let $1\le p\le\infty$ $\alpha,\,\beta,\gamma>0$ 
and let $\rho, \eta\in\R$. Then, 
\begin{equation}
\label{eqn:decompK21}
K_\gamma(w,z)
=\sum_{k=0}^{n-1}
G_{k,\gamma,\frac{\alpha}{\alpha+\beta}}(w\overline{z})
H_{k,\gamma,\frac{\alpha}{\alpha+\beta}}(w\overline{z})
+ 
R_{n,\gamma,\frac{\alpha}{\alpha+\beta}}(w\overline{z}),
\end{equation}
and, for $k=0,\cdots,n-1$,
\begin{align}
\label{claim:G21}
 &\|G_{k,\gamma,\frac{\alpha}{\alpha+\beta}}
(\cdot\,\overline{z})\|_{F^p_{\alpha,\rho}}
\lesssim 
(1+|z|)^{\rho+(\ell-1)(2k+1-2n/p)}
e^{\frac{\alpha\gamma^2}{2(\alpha+\beta)^2 }|z|^{2\ell}},
\\
\label{claim:H21}
 & 
\|H_{k,\gamma,\frac{\alpha}{\alpha+\beta}}
(\cdot\,\overline{z})\|_{F^{p'}_{\beta,\eta}}
\lesssim 
(1+|z|)^{\eta+(\ell-1)(2n/p-2k-1)}
e^{\frac{\beta\gamma^2}{2(\alpha+\beta)^2 }|z|^{2\ell}},
\\
\label{claim:R21}
 &
\|R_{n,\gamma,\frac{\alpha}{\alpha+\beta}}
(\cdot\,\overline{z})\|_{F^p_{\alpha,\rho}}
\lesssim 
(1+|z|)^{\rho+(\ell-1)(2n/p'-1)}
e^{\frac{\alpha\gamma^2}{2(\alpha+\beta)^2} |z|^{2\ell}}
,\quad\text{if $\alpha\ge \beta$},\\
\label{claim:R22}
 &
\|R_{n,\gamma,\frac{\alpha}{\alpha+\beta}}
(\cdot\,\overline{z})\|_{F^{p'}_{\beta,\eta}}
\lesssim 
(1+|z|)^{\eta+(\ell-1)(2n/p-1)}
e^{\frac{\beta\gamma^2}{2(\alpha+\beta)^2 }|z|^{2\ell}}
,\quad\text{if $\alpha< \beta$}.
\end{align}
Therefore, defining
\begin{align*} 
&G_{n,\gamma,\frac{\alpha}{\alpha+\beta}}
=R_{n,\gamma,\frac{\alpha}{\alpha+\beta}},\quad 
\text{and}\quad 
H_{n,\gamma,\frac{\alpha}{\alpha+\beta}}=1,
\quad && \text{if $\alpha\ge \beta$},\\
&G_{n,\gamma,\frac{\alpha}{\alpha+\beta}}=1,\quad
\text{and}\quad
H_{n,\gamma,\frac{\alpha}{\alpha+\beta}}
=R_{n,\gamma,\frac{\alpha}{\alpha+\beta}},
\quad && \text{if $\alpha< \beta$},
\end{align*}
we obtain
\begin{align*}
\|K_{\gamma,z}\|_{F^1_{\alpha+\beta,\rho+\eta}}
&\simeq \sum_{k=0}^{n}
 \|G_{k,\gamma,\frac{\alpha}{\alpha+\beta}}
(\cdot\,\overline{z})\|_{F^p_{\alpha,\rho}}
\|H_{k,\gamma,\frac{\alpha}{\alpha+\beta}}
(\cdot\,\overline{z})\|_{F^{p'}_{\beta,\eta}}\\
&\simeq(1+|z|)^{\rho+\eta} 
e^{\frac{\gamma^2\,|z|^{2\ell}}{2(\alpha+\beta)}}.
\end{align*}
\end{thm}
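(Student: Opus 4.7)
The plan is to obtain Theorem \ref{thm:decompK2} as a direct consequence of Corollary \ref{cor:factorizationK} and Proposition \ref{prop:claims}, by specializing $\theta$ to the balanced value $\theta=\alpha/(\alpha+\beta)$ (so $1-\theta=\beta/(\alpha+\beta)$), and then combining with Proposition \ref{prop:pqnormBergman} to produce the norm equivalence. First I would plug in this choice of $\theta$ into Corollary \ref{cor:factorizationK}: in view of the definitions \eqref{eqn:GH1}, \eqref{eqn:GH2}, \eqref{eqn:R2}, the identity displayed there becomes exactly the decomposition \eqref{eqn:decompK21}. The arithmetic to remember here is
\[
\frac{\theta^2}{\alpha}=\frac{\alpha}{(\alpha+\beta)^2},\qquad
\frac{(1-\theta)^2}{\beta}=\frac{\beta}{(\alpha+\beta)^2},\qquad
\frac{\theta^2}{\alpha}+\frac{(1-\theta)^2}{\beta}=\frac{1}{\alpha+\beta},
\]
the last equality being the minimum of $\theta\mapsto\theta^2/\alpha+(1-\theta)^2/\beta$, which is precisely why this particular splitting matches the exponential weight in $\|K_{\gamma,z}\|_{F^1_{\alpha+\beta,\rho+\eta}}$.

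For the factor estimates \eqref{claim:G21} and \eqref{claim:H21}, I would simply read off the specializations of \eqref{claim:E1} and \eqref{claim:E2} at $\theta=\alpha/(\alpha+\beta)$, using the first identity above to convert the exponent $\theta^2\gamma^2/(2\alpha)$ to $\alpha\gamma^2/(2(\alpha+\beta)^2)$, and analogously for $H_k$. For the remainder estimates \eqref{claim:R21} and \eqref{claim:R22}, \eqref{claim:R1} gives two exponential terms with rates $\alpha\gamma^2/(2(\alpha+\beta)^2)$ and $\beta^2\gamma^2/(2\alpha(\alpha+\beta)^2)$; the first dominates iff $\alpha\ge\beta$, yielding \eqref{claim:R21}. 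When $\alpha<\beta$ one instead pairs $R_n$ with the $F^{p'}_{\beta,\eta}$ norm and applies \eqref{claim:R2}, whose dominant rate $\beta\gamma^2/(2(\alpha+\beta)^2)$ gives \eqref{claim:R22}. This case-splitting is the reason for the asymmetric definition of the extended pairs $(G_n,H_n)$.

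For the final equivalence, Proposition \ref{prop:pqnormBergman} identifies the outer quantity:
\[
\|K_{\gamma,z}\|_{F^1_{\alpha+\beta,\rho+\eta}}\simeq(1+|z|)^{\rho+\eta}e^{\gamma^2|z|^{2\ell}/(2(\alpha+\beta))}.
\]
The upper bound on the middle sum follows by summing \eqref{claim:G21}--\eqref{claim:R22}: the $G_k H_k$ products all contribute exactly this polynomial-times-exponential, while the $R_n$ term contributes a strictly smaller exponential (since $\alpha/(\alpha+\beta)^2<1/(\alpha+\beta)$ and $\beta/(\alpha+\beta)^2<1/(\alpha+\beta)$) times an absorbable polynomial. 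The lower bound is the sandwich step: from the decomposition \eqref{eqn:decompK21} and H\"older's inequality applied term-by-term,
\[
\|K_{\gamma,z}\|_{F^1_{\alpha+\beta,\rho+\eta}}\le\sum_{k=0}^{n}\|G_{k,\gamma,\alpha/(\alpha+\beta)}(\cdot\,\overline z)\|_{F^p_{\alpha,\rho}}\,\|H_{k,\gamma,\alpha/(\alpha+\beta)}(\cdot\,\overline z)\|_{F^{p'}_{\beta,\eta}},
\]
so both the middle sum and the right-hand quantity are trapped between two equivalent expressions.

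The only nontrivial content beyond the bookkeeping above already lives in Proposition \ref{prop:claims}, whose proof (not in the excerpt) would be the main obstacle: one needs the sharp pointwise bounds for $E^{(k)}_{1/\ell,(\ell+1)/(2\ell)}$ from Corollary \ref{cor:estimatesEab}, a Cauchy-type argument to control the $(n-1)$-th derivative of the remainder $R_{\ell,\theta}$, and then Lemma \ref{lem:Phi} to turn pointwise bounds of the form $(1+|w\overline z|)^{\sigma}\varphi_{c}(w\overline z)$ into the claimed polynomial$\times$exponential $F^p_{\alpha,\rho}$ norm. Once these pieces are in hand, Theorem \ref{thm:decompK2} is, as above, essentially a matter of specialization and bookkeeping.
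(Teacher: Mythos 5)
Your proposal is correct and follows essentially the same route as the paper: the decomposition is Corollary \ref{cor:factorizationK} at $\theta=\alpha/(\alpha+\beta)$, the factor estimates \eqref{claim:G21}--\eqref{claim:R22} are exactly the corresponding specializations of Proposition \ref{prop:claims} (using $\theta^2/\alpha=\alpha/(\alpha+\beta)^2$, etc.), and the two-sided norm equivalence is obtained just as you describe, sandwiching between Proposition \ref{prop:pqnormBergman} and the term-by-term H\"older bound. Your case analysis on $\alpha\ge\beta$ versus $\alpha<\beta$ for the remainder, and the observation that its exponential rate is strictly subcritical, also match the paper's treatment.
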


Next we prove Proposition \ref{prop:claims}.

\begin{proof}[Proof of Proposition \ref{prop:claims}] 
In order to simplify the notations, for $k=0,\cdots,n-1$ 
we write $G_k$ and 
$H_k$ instead of $G_{k,\gamma,\theta}$ and 
$H_{k,\gamma,\theta}$, respectively, and $R_n$ 
instead of $R_{n,\gamma,\theta}$.

By Corollary \ref{cor:estimatesEab} all the Mittag-Leffler 
functions in the identity \eqref{eqn:factorizationE} are in 
$E(\C)$, so $R_{\ell,\theta}\in E(\C)$. Therefore, 
Corollary \ref{cor:diffzw}\eqref{item:diffzw1} shows 
that  $G_k(\cdot\,\overline{z})$, 
$H_k(\cdot\,\overline{z})$ and 
$R_n(\cdot\,\overline{z})$ are in $E(\C^n)$.

Next, we prove \eqref{claim:E1} and \eqref{claim:R1}.
Since $G_k$ and $R_n$ are in $E$,  there exists $\tau>0$ 
such that, for every $|z|\le 1$, 
$|G_k(\cdot\,\overline{z})|,
\,|R_n(\cdot\,\overline{z})|
\lesssim e^{\tau|w|^\ell}$ and consequently 
$\|G_k(\cdot\overline{z})\|_{F^p_{\alpha,\rho}},
\,\|R_n(\cdot\overline{z})\|_{F^p_{\alpha,\rho}}
\lesssim 1$.

Next consider $|z|> 1$. 
 The estimate \eqref{claim:E1} follows from 
Corollary \ref{cor:diffzw}\eqref{item:diffzw2}, 
\eqref{eqn:factorizationE1} 
and Lemma \ref{lem:Phi}. Indeed,
\begin{align*}
\|E^{(k)}_{\frac 1\ell,\frac{\ell+1}{2\ell}} 
(\cdot\,(\theta\gamma)^{1/\ell}\overline{z})\|
_{F^p_{\alpha,\rho}}
&\lesssim (1+|z|)^{-k}
\|E_{\frac 1\ell,\frac{\ell+1}{2\ell}} 
(\cdot\,(\theta\gamma)^{1/\ell}\overline{z})\|
_{F^p_{\alpha,\rho+k(2\ell-1)}}\\
&\lesssim 
(1+|z|)^{\frac{\ell-1}2-k} \|\Phi_{\theta\gamma,z}\|
_{L^p_{\alpha,\rho+\frac{\ell-1}2+k(2\ell-1)}}\\
&\lesssim 
(1+|z|)^{\rho+(\ell-1)(2k+1-2n/p)}
e^{\frac{\theta^2\gamma^2}{2\alpha }|z|^{2\ell}}. 
\end{align*}

In order to  prove \eqref{claim:R1} for $|z|>1$, we follow 
the same arguments used to prove  \eqref{claim:E1}. 
Note that  \eqref{eqn:factorizationR} shows that 
$R_{\ell,\theta}$satisfies an estimate similar to the one 
satisfied by $E_{\frac 1\ell,\frac{\ell+1}{2\ell}}$:
\begin{equation}\label{eqn:Restimate}
|R_{\ell,\theta}(\lambda)|\lesssim 
(1+|\lambda|)^{\frac{\ell-1}{2}}
(\varphi_\theta(\lambda)+\varphi_{1-\theta}(\lambda)).
\end{equation}
Then
 Corollary \ref{cor:diffzw}\eqref{item:diffzw2}, 
 \eqref{eqn:Restimate} and Lemma \ref{lem:Phi} 
give
\begin{align*}
\|R^{(n-1)}_{\ell,\theta} 
(\cdot\,\gamma^{\frac 1\ell}\overline{z})\|
_{F^p_{\alpha,\rho}}
&\lesssim (1+|z|)^{1-n}
\|R_{\ell,\theta} 
(\cdot\,\gamma^{\frac 1\ell}\overline{z})\|
_{F^p_{\alpha,\rho+(n-1)(2\ell-1)}}\\
&\lesssim 
(1+|z|)^{\frac{\ell-1}2+1-n}
\|\Phi_{\theta\gamma,z}\|_{L^p_{\alpha,\rho+\frac{\ell-1}2+
(n-1)(2\ell-1)}}\\
&\quad+(1+|z|)^{\frac{\ell-1}2+1-n}
\|\Phi_{(1-\theta)\gamma,z}\|
_{L^p_{\alpha,\rho+\frac{\ell-1}2+(n-1)(2\ell-1)}}
\\
&\lesssim 
(1+|z|)^{\rho+(\ell-1)(2n/p'-1)}\bigl(
e^{\frac{\theta^2\gamma^2}{2\alpha }|z|^{2\ell}}+
e^{\frac{(1-\theta)^2\gamma^2}{2\alpha }|z|^{2\ell}}
\bigr). \qedhere
\end{align*}
\end{proof}

\begin{rem}
 If $\ell $ is a positive integer, then $e^{\lambda^\ell}$ 
is a zero-free  entire function. 
Therefore, we have the strong decomposition
\[
K_\gamma(w,z)=\left[
e^{\frac{\alpha\gamma}{\alpha+\beta} (z\overline w)^\ell}
\right]
\cdot \left[e^{-\frac{\alpha\gamma}{\alpha+\beta} (z\overline w)^\ell}\, K_\gamma(w,z)\right],
\]
whose terms can be estimated with the same methods used above (for $n=1$, $\alpha=\beta$ and $\varrho=\eta=0$, see \cite{Ca-Fa-Pa-Pe}.)
\end{rem}

\section{Proof of  Theorem \ref{thm:bounded-bf} and 
Corollary \ref{cor:weakfact}}
\label{sec:proof11}

\subsection{Proof of Theorem \ref{thm:bounded-bf}}
Assume that $|\Lambda(f,g)|\lesssim 
\|f\|_{F^p_{\alpha,\rho}}\|f\|_{F^{p'}_{\beta,\eta}}$, for 
$f,g\in E$.
The first observation is that if there exists $\gamma>0$, 
$\tau\in\R$ and $b\in F^{\infty}_{\gamma,\tau}$ such that 
$\Lambda(f,g)=\langle fg,b\rangle_\gamma$, 
for $f,g\in E$, then Proposition \ref{prop:Ponto}  and 
Theorem \ref{thm:decompK} give
\begin{align*}
|b(z)|=|\langle K_\gamma(\cdot,z),b\rangle_\gamma|
&\le \sum_{k=0}^n|\Lambda(G_k(\cdot\,\overline{z}),
H_k(\cdot\,\overline{z}))|\\
&\lesssim \|\Lambda\| (1+|z|)^{\rho+\eta} 
e^{\frac{\gamma^2}{2(\alpha+\beta)}|z|^{2\ell}}.
\end{align*}
Thus $b\in F^{\infty}
_{\frac{\gamma^2}{\alpha+\beta},-\rho-\eta}$ and 
$\|b\|_{ F^{\infty}
_{\frac{\gamma^2}{\alpha+\beta},-\rho-\eta}}\lesssim 
 \|\Lambda\|$.

Therefore it is enough to prove that there exists 
$b\in F^\infty_{\frac{\alpha+\beta}2,\tau}$
such that 
$\Lambda(f,g)
=\langle fg,b\rangle_{\frac{\alpha+\beta}2}$, 
for every $f,g\in E$.

Let $E^p_{\alpha,\rho}=(E,\|\cdot\|_{F^p_{\alpha,\rho}})$
and assume $\alpha\ge\beta$.  
The boundedness of the bilinear form $\Lambda$ on 
$E^p_{\alpha,\rho}\times E^{p'}_{\beta,\eta}$ implies that 
$f\mapsto \Lambda(f,1)$ is a bounded linear form on 
$E^p_{\alpha,\rho}$.
Since  $\frac{\alpha+\beta}2\le \alpha$, 
Corollary  \ref{cor:embedF} shows 
that $E^{1}_{\frac{\alpha+\beta}2,-\tau}\hookrightarrow 
E^p_{\alpha,\rho}$, for any 
$\tau\le -\rho-2n(\ell-1)/p'$.
In particular, $f\mapsto \Lambda(f,1)$ is a bounded 
linear form on  $E^{1}_{\frac{\alpha+\beta}2,-\tau}$.
Therefore, using  that 
$\Lambda$ is a Hankel form and the fact that de dual of 
 $E^{1}_{\frac{\alpha+\beta}2,-\tau}$ with respect to 
the $\frac{\alpha+\beta}2$-pairing is 
$F^{\infty}_{\frac{\alpha+\beta}2,\tau}$ (see Proposition  
\ref{prop:dualF}), we obtain the result.
The case $\alpha<\beta$ can be proved in a similar way. 

Next we prove the converse.
By Proposition \ref{prop:Ponto},
 if $b\in F^{\infty}_{\frac{\alpha+\beta}4,-\rho-\eta}$ then
there exists $\varphi\in L^\infty_{0,-\rho-\eta}$ such 
that $P_{\frac{\alpha+\beta}2}(\varphi)=b$ and 
$\|\varphi\|_{L^\infty_{0,-\rho-\eta}}\simeq\|b\|_{F^{\infty}
_{\frac{\alpha+\beta}4,-\rho-\eta}}$.
Therefore 
$
\Lambda(f,g)=
\langle fg,b\rangle_{\frac{\alpha+\beta}2}
=\langle fg,\varphi\rangle_{\frac{\alpha+\beta}2}
$, for $f,g\in E$. 
Hence Fubini's theorem and H\"older's inequality give 
\[
|\Lambda(f,g)|
\le
\|\varphi\|_{L^\infty_{0,-\rho-\eta}}
\|f\|_{F^p_{\alpha,\rho}}\|g\|_{F^{p'}_{\beta,\eta}}.
\]
So if we consider the form 
$\widetilde \Lambda:
L^p_{\alpha,\rho}\times L^{p'}_{\beta,\eta} \to\C$ 
defined by 
$\widetilde \Lambda(f,g)
=\langle fg,\varphi\rangle_\alpha$ 
we have $\widetilde \Lambda=\Lambda$ on $E\times E$ 
and 
\[
\|\varphi\|_{L^\infty_{0,-\rho-\eta}}\simeq\|b\|_{F^{\infty}
_{\frac{\alpha+\beta}4,-\rho-\eta}}
\simeq\|\Lambda\|\le \|\widetilde\Lambda\|\le
\|\varphi\|_{L^\infty_{0,-\rho-\eta}}.
\]

\subsection{Proof of Corollary \ref{cor:weakfact}}
First we consider the case $1<p<\infty$.

By H\"older's inequality  it is clear that   
$F^p_{\alpha,\rho}\odot F^{p'}_{\beta,\eta}
\hookrightarrow 
F^1_{\alpha+\beta,\rho+\eta}$. Since $E$ is dense in 
both spaces, in order to prove that they coincide 
it is enough to prove that 
$\|h\|_{F^p_{\alpha,\rho}\odot F^{p'}_{\beta,\eta}}\simeq 
\|h\|_{F^1_{\alpha+\beta,\rho+\eta}}$, 
 for $h\in E$.

It is easy to check that the dual of 
$F^p_{\alpha,\rho}\odot F^{p'}_{\beta,\eta}$ is 
isometrically isomorphic  to the space 
of bounded Hankel bilinear forms 
on $F^p_{\alpha,\rho}\times F^{p'}_{\beta,\eta}$, 
which we denote by  $\mathcal{H}$. 
Namely, any $\Psi\in (F^p_{\alpha,\rho}\odot 
F^{p'}_{\beta,\eta})'$ defines a bounded bilinear form on 
$F^p_{\alpha,\rho}\times F^{p'}_{\beta,\eta}$ by 
$\Lambda(f,g)=\Psi(fg)$, $f,g\in E$, which satisfies 
$\|\Lambda\|=\|\Psi\|$.
Conversely, each $\Lambda\in\mathcal{H}$  defines a  
form $\Psi$ on 
$F^p_{\alpha,\rho}\odot F^{p'}_{\beta,\eta}$ by 
$\Psi(\sum_jf_j\,g_j)= \sum_j\Lambda(f_j,g_j)$ and 
$\|\Psi\|=\|\Lambda\|$.
By Theorem \ref{thm:bounded-bf}, the map 
$b\mapsto \Lambda_b=\langle\cdot,
b\rangle_{\frac{\alpha+\beta}2}$ 
is a topological isomorphism from  
$F^\infty_{\frac{\alpha+\beta}4,-\rho-\eta}$ onto 
$\mathcal{H}$. Therefore the duality 
$(F^1_{\alpha+\beta,\rho+\eta})'=
F^\infty_{\frac{\alpha+\beta}4,-\rho-\eta}$ with respect to 
the $\frac{\alpha+\beta}2$-pairing 
(see Proposition  \ref{prop:dualF}) gives
\[
\|h\|_{F^p_{\alpha,\rho}\odot\, F^{p'}_{\beta,\eta}}
=\sup_{\|\Psi\|=1}\,|\Psi(h)|
\simeq
\sup_{\|b\|_{F^\infty_{\frac{\alpha+\beta}4,-\varrho-\eta}}=1} |\langle 
h,b\rangle_{\frac{\alpha+\beta}2}|\simeq 
\|h\|_{F^1_{\alpha+\beta,\rho+\eta}}.
\]

The proof of the case $p=1$ is similar. 
It is clear that 
$F^1_{\alpha,\rho}\odot F^{\infty}_{\beta,\eta}
\hookrightarrow  F^1_{\alpha+\beta,\rho+\eta}$.
By Proposition  \ref{prop:dualF} we have $(\mathfrak{f}^\infty_{\alpha+\beta,\rho+\eta})'=
F^1_{\frac{\alpha+\beta}4,-\rho-\eta}$ with respect to 
the $\frac{\alpha+\beta}2$-pairing.
 Hence, arguing  as above, we have 
$\|h\|_{F^1_{\alpha,\rho}\odot\, 
\mathfrak{f}^{\infty}_{\beta,\eta}}
\simeq 
\|h\|_{F^1_{\alpha+\beta,\rho+\eta}}$
and 
		\[
		F^1_{\alpha+\beta,\rho+\eta}=
		F^{1}_{\alpha,\rho}\odot \mathfrak{f}^{\infty}_{\beta,\eta}
\hookrightarrow
	F^{1}_{\alpha,\rho}\odot F^{\infty}_{\beta,\eta}
\hookrightarrow F^1_{\alpha+\beta,\rho+\eta},
		\]
		which ends the proof.

\section{Proof of Theorem \ref{thm:hankel}}
\label{sec:proof13}

We begin  observing that by dilation we can reduce the 
proof of Theorem \ref{thm:hankel} to the case 
$\alpha=1$.
As usual, we denote 
$S_p(F^2_{\alpha,\rho},\overline{F^2_{\alpha,\rho}})$ 
by $S_p(F^2_{\alpha,\rho})$.

By Lemma \ref{lem:pairing}\eqref{item:pairing1}, 
the dilation operator 
$\Psi_\alpha(f)(z):=
f(\alpha^{\frac{-1}{2\ell}}z)$ is a 
topological isomorphism from 
$L^{p}_{\tau\alpha,\rho}$ onto $L^{p}_{\tau,\rho}$, 
$1\le p\le\infty$, $\tau>0$,  such that 
 $\Psi_\alpha(F^{p}_{\tau\alpha,\rho})=
F^{p}_{\tau,\rho}$ and 
 $\Psi_\alpha(\overline{F^{p}_{\tau \alpha,\rho}})=
\overline{F^{p}_{\tau,\rho}}$. 
Moreover, for $f\in E$, 
\[
\Psi_\alpha(\mathfrak{h}_{b,\alpha}(f))(z) =
\langle 
K_\alpha(\cdot,\alpha^{\frac{-1}{2\ell}}z)\,f,b 
\rangle_\alpha
=\langle 
K_\alpha(\alpha^{\frac{-1}{2\ell}}\cdot,z)\,f,b 
\rangle_\alpha.
\]
Therefore  
Lemma \ref{lem:pairing}\eqref{item:pairing2}
and \eqref{eqn:Kdelta} give 
\begin{align*}
\Psi_\alpha (\mathfrak{h}_{b,\alpha}(f))(z)
&=\alpha^{\frac{-n}{\ell}}\langle 
K_\alpha(\alpha^{\frac{-1}{\ell}}\cdot,z)\,\Psi_\alpha(f),
\Psi_\alpha(b) \rangle_1\\
&=\langle K_1(\cdot,z)\,\Psi_\alpha(f),
\Psi_\alpha(b) \rangle_1\\
&=\mathfrak{h}_{\Psi_\alpha(b),1}(\Psi_\alpha(f))(z).
\end{align*}
So the boundedness (compactness)  of the operator 
$\mathfrak{h}_{b,\alpha}$ on $F^p_{\alpha,\rho}$ is 
equivalent to the boundedness 
(respectively, compactness) of 
$\mathfrak{h}_{\Psi_\alpha(b),1}$ on $F^p_{1,\rho}$ and
\[
\|\mathfrak{h}_{b,\alpha} \|_{F^p_{\alpha,\rho}}
\simeq \|\mathfrak{h}_{\Psi_\alpha(b),1} 
\|_{F^p_{1,\rho}}.
\]

Similarly, $\mathfrak{h}_{b,\alpha}\in 
S_p(F^2_{\alpha,\rho})$ 
if and only if 
$\mathfrak{h}_{\Psi_\alpha(b),1} \in 
 S_p(F^2_{1,\rho})$, 
with equivalent norms
(see, for instance, \cite[Theorem 7.8]{Weidmann}).
Moreover, 
\[
\|\mathfrak{h}_{\Psi_\alpha(b),1} \|_{F^p_{1,\rho}} \simeq 
\|\Psi_\alpha(b) \|_{F^\infty_{\frac 12}}
\Longleftrightarrow 
\|\mathfrak{h}_{b,\alpha}\|_{F^\infty_{\frac \alpha 2}}
\simeq \|b\|_{F^\infty_{\frac \alpha 2}},
\]
and
\[
\|\mathfrak{h}_{\Psi_\alpha(b),1} 
\|_{S_p(F^2_{1,\rho})}
\simeq 
\|\Psi_\alpha(b) 
\|_{F^p_{\frac 12,\rho+2n(\ell-1)/p}}
\Longleftrightarrow 
\|\mathfrak{h}_{b,\alpha}\|
_{S_p(F^2_{\alpha,\rho})}
\simeq \|b\|_{F^p_{\frac \alpha 2,\rho+2n(\ell-1)/p}}.
\]

Hence from now on we only consider the case  
$\alpha=1$ and we will simplify the notations by writing
$\langle\cdot,\cdot\rangle$,  $\mathfrak{h_b}$, $K$, $P$, 
$\dots$, instead of $\langle\cdot,\cdot\rangle_1$,  
$\mathfrak{h_{b,1}}$, $K_1$, $P_1$, $\dots$

In order to prove Theorem \ref{thm:hankel}, we will use  
\eqref{eqn:decompK21} with $\gamma=1$ and 
$\alpha=\beta=1$, that is,
\begin{equation}\label{eqn:decomhankel}
K(w,z)=\sum_{k=0}^{n-1} G_{k,1,\frac 12}(w\,\overline z)
H_{k,1,\frac 12}(w\,\overline z)
+R_{n,1,\frac 12}(w\,\overline z).
\end{equation}
According to the choice in the statement of Theorem  
\ref{thm:decompK2}, we write  
 $G_k=G_{k,1,1/2}$, $H_k=H_{k,1,1/2}$, $k=0,\cdots, n-1$, $G_n=R_{n,1,1/2}$ and $H_n=1$.

Let 
$b\in F^\infty_{\beta}$, $0<\beta <2$. Since 
$\overline{b(z)}
=\langle K(\cdot,z),b\rangle$, \eqref{eqn:decomhankel} 
shows that 
\begin{equation}\label{eqn:repreb}
\overline{b(z)}
=\sum_{k=0}^n \langle H_{k}(\cdot\overline{z}),
\overline{\mathfrak{h}_b( 
G_{k}(\cdot\overline{z}))}\rangle
=\sum_{k=0}^n\langle G_{k}(\cdot\overline{z}),
\overline{\mathfrak{h}_b( 
H_{k}(\cdot\overline{z}))}\rangle.
\end{equation}
This  representation  formula is the main tool to prove Theorem \ref{thm:hankel}.

\subsection{
Proof of Theorem 
\ref{thm:hankel}\eqref{item:hankelbound}}\quad\par

In this section we prove that 
$\mathfrak{h}_{b}$ extends to a bounded 
(compact) operator from $F^p_{1,\rho}$ to 
$\overline{F^p_{1,\rho}}$ if and only if $b\in 
F^\infty_{\frac{1}2}$
 (respectively, $b\in \mathfrak{f}^\infty_{\frac 12}$), 
and in this case  
$\|\mathfrak{h}_{b}\|_{F^{p}_{1,\rho}}
\simeq \|b\|_{F^\infty_{\frac 12}}$.

\subsubsection{\textbf{Proof of the sufficient condition}} \quad\par

Assume $b\in F^\infty_{\frac 12}$. 
By Proposition \ref{prop:Ponto}, 
there exists $\varphi\in L^\infty$ such that 
$P(\varphi)=b$ and 
$\|\varphi\|_{L^\infty}\simeq\|b\|_{F^\infty_{\frac 12}}$. 
Therefore 
$
\mathfrak{h}_{b}(f)(z)=\langle f\,K(\cdot,z),\,b\rangle=
\langle f\,K(\cdot,z),\,\varphi\rangle
$,
 and consequently
\begin{equation}\label{eqn:necboundh}
|\mathfrak{h}_{b}(f)(z)|\le \|\varphi\|_{L^\infty}
\langle |f|,\,|K(\cdot,z)|\rangle.
\end{equation}
By Proposition \ref{prop:pqnormBergman},
\[
|\mathfrak{h}_{b}(f)(z)|
\le \|\varphi\|_{L^\infty}
\|f\|_{F^\infty_{1,\rho}}\,\|K(\cdot,z)\|_{F^1_{1,-\rho}}
\lesssim \|\varphi\|_{L^\infty}
\|f\|_{F^\infty_{1,\rho}}(1+|z|)^{-\rho}e^{\frac 12|z|^{2\ell}},
\]
so  
$\|\mathfrak{h}_{b,\alpha}\|_{F^{\infty}_{1,\rho}}
\lesssim \|b\|_{F^\infty_{\frac 12}}$.
Next, \eqref{eqn:necboundh}, Fubini's theorem and
Proposition \ref{prop:pqnormBergman} give
\begin{align*}
\|\mathfrak{h}_{b,\alpha}(f)\|_{F^{1}_{1,\rho}}
&\lesssim \|\varphi\|_{L^\infty}
\langle |f(w)|,\,\|K(\cdot,w)\|_{F^{1}_{1,\rho}}\rangle\\
&\lesssim \|\varphi\|_{L^\infty}
\langle |f(w)|,\,(1+|w|)^{\rho}e^{\frac 12|w|^{2\ell}}\rangle\\
&=\|\varphi\|_{L^\infty}\|f\|_{F^{1}_{1,\rho}},
\end{align*}
which proves that  
$\|\mathfrak{h}_{b,\alpha}\|_{F^{1}_{1,\rho}}
\lesssim \|b\|_{F^\infty_{\frac 12}}$.

By  Lemma \ref{lem:interpolation} we obtain  
$\|\mathfrak{h}_{b,\alpha}\|_{F^{p}_{1,\rho}}
\lesssim \|b\|_{F^\infty_{\frac 12}}$, for $1\le p\le\infty$.

Now assume that $b\in\mathfrak{f}^{\infty}_{\frac 12}$. 
Since $\mathfrak{f}^{\infty}_{\frac 12}$ is the
closure of the polynomials in $F^{\infty}_{\frac 12}$, there
is a sequence of polynomials $\{q_k\}_{k\in\N}$ such that
$\|q_k-b\|_{F^{\infty}_{\frac 12}}\to 0$.
Therefore
$\|\mathfrak{h}_{q_k}-\mathfrak{h}_{b}\|_
{F^{p}_{1,\rho}}\to 0$, because
\[
\|\mathfrak{h}_{q_k}
-\mathfrak{h}_{b}\|_{F^{p}_{1,\rho}}
=\|\mathfrak{h}_{q_k-b}\|_{F^{p}_{1,\rho}}
\lesssim \|q_k-b\|_{F^{\infty}_{\frac 12}}.
\]
Since
$\{\mathfrak{h}_{q_k}\}_{k\in \N}$ is a sequence of finite
rank operators, it follows that
$\mathfrak{h}_{b}:F^{p}_{1,\rho}
\to\overline{F^{p}_{1,\rho}}$
is compact.

\begin{rem}
\label{rem:hankelf}
Using the above arguments we have that if 
$b\in F^{\infty}_{\frac 12}$ then  $\mathfrak{h}_{b}$ is 
bounded on $\mathfrak{f}^{\infty}_{1,\rho}$. 
Indeed, by \eqref{eqn:necboundh} and Proposition 
\ref{prop:estimatesK},
\begin{align*}
|\mathfrak{h}_{b}(f)(z)|
&\lesssim
\langle \chi_R|f|,\,|K(\cdot,z)|\rangle
+
\langle (1-\chi_R)|f|,\,|K(\cdot,z)|\rangle\\
&\lesssim
e^{(R+1)^\ell|z|^\ell} \| f\|_{L^1_{2}}
+
\|(1-\chi_R)f\|_{L^{\infty}_{1,\rho}},\,
\|K(\cdot,z)\|_{F^{1}_{1,-\rho}}
\end{align*}
where $f\in \mathfrak{f}^{\infty}_{1,\rho}$ and $\chi_R$ 
denotes the characteristic function of the ball centered 
at $0$ and radius $R$. 
By Proposition \ref{prop:pqnormBergman} 
\[
(1+|z|)^\rho\,e^{-\frac 12|z|^{2\ell}}|\mathfrak{h}_{b}(f)(z)|
\lesssim 
\|f\|_{F^\infty_{1,\rho}}\,e^{-\frac 12|z|^{2\ell}
+(R+1)^\ell|z|^\ell}+\|(1-\chi_R)f\|_{L^{\infty}_{1,\rho}}.
\]
Since $f\in \mathfrak{f}^{\infty}_{1,\rho}$, 
$\|(1-\chi_R)f\|_{L^{\infty}_{1,\rho}}\to 0$ as $R\to\infty$.
Moreover, for any $R>0$,  
$(1+|z|)^\rho\,e^{-\frac 12|z|^{2\ell}
+(R+1)^\ell|z|^\ell}\to 0$ as $|z|\to\infty$.
That proves that 
$\mathfrak{h}_{b}(f)\in \mathfrak{f}^{\infty}_{1,\rho}$.
\end{rem}

\subsubsection{\textbf{Proof of the necessary  condition}}\quad\par

First we prove that if $\mathfrak{h}_b$ is bounded on 
$F^{p}_{1,\rho}$, then $b\in F^\infty_{\frac{1}2}$.

For $k=0,\cdots,n$, let us consider the "normalized" 
functions 
\begin{align*}
\widetilde{G}_{k,z}(w)&
:=(1+|z|)^{-\rho-(\ell-1)(2k+1-2n/p)} 
e^{-\frac{|z|^{2\ell}}8} G_{k}(w\overline z ),\\
\widetilde{H}_{k,z}(w)&:=(1+|z|)^{\rho-(\ell-1)(2n/p-2k-1)}
e^{-\frac{|z|^{2\ell}}8} H_{k}(w\overline z).
\end{align*}
By \eqref{claim:G21}-\eqref{claim:R21} we have 
$\|\widetilde{G}_{k,z}\|_{F^p_{1,\rho}}\lesssim 1$ and 
$\|\widetilde{H}_{k,z}\|_{F^{p'}_{1,-\rho}}\lesssim 1$.
Using the  representation  formula \eqref{eqn:repreb} 
we have 
\begin{equation}\label{eqn:necbounded}
e^{-\frac{|z|^{2\ell}}4}\, \overline{b(z)}
=\sum_{k=0}^n\langle \widetilde{H}_{k,z},
\overline{\mathfrak{h}_b( \widetilde{G}_{k,z})}\rangle,
\end{equation}
so, by Schwarz's  inequality,
$\|b\|_{F^\infty_{\frac 12}}
\lesssim \|\mathfrak{h}_b\|_{F^{p}_{1,\rho}}$.

Now assume that $\mathfrak{h}_b$ is compact on 
$F^{p}_{1,\rho}$, $1<p<\infty$. 
By \eqref{eqn:necbounded} we have 
\[
e^{-\frac{|z|^{2\ell}}4}|b(z)|\lesssim \sum_{k=0}^n
\bigl\|\mathfrak{h}_b( \widetilde 
G_{k,z})\bigr\|_{F^{p}_{1,\rho}}.
\]
Consequently, 
in order to show that $b\in \mathfrak{f}^\infty_{\frac 12}$,
 it is enough to prove that 
$\|\mathfrak{h}_b(\tilde G_{k,z})\|_{F^{p}_{1,\rho}}\to 0$
 as $|z|\to\infty$.
Since, for $|w|\le R$, 
$|G_k(w\overline z )|\lesssim e^{(R^\ell+1)|z|^\ell}$,
 $\widetilde G_{k,z}$ converges uniformly to $0$ on 
compact sets as $|z|\to\infty$.
This fact together with 
$\|\widetilde{G}_{k,z}\|_{F^p_{1,\rho}}\lesssim 1$  
easily shows that $\tilde G_{k,z}\to 0$  
weakly in $F^{p}_{1,\rho}$ as $|z|\to\infty$
(see, for instance, \cite[Lemma 5.1]{Ca-Fa-Pa-Pe}).
Therefore, the compactness of  $\mathfrak{h}_b$ 
implies that 
$\|\mathfrak{h}_b(\tilde G_{k,z})\|_{F^{p}_{1,\rho}}\to 0
$ as $|z|\to\infty$.

The same argument proves that if  $\mathfrak{h}_b$ 
is compact on $\mathfrak{f}^{\infty}_{1,\rho}$, 
then $b\in \mathfrak{f}^\infty_{\frac 12}$. 

Next we use this result to prove that if  $\mathfrak{h}_b$ 
is compact on $F^{1}_{1,\rho}$ then 
$b\in \mathfrak{f}^\infty_{\frac 12}$. 

If  $\mathfrak{h}_b$ is compact on $F^{1}_{1,\rho}$, 
then it is bounded, so  $b\in F^\infty_{\frac 12}$.
By Remark \ref{rem:hankelf} we have that 
 $\mathfrak{h}_b$ is 
bounded on $\mathfrak{f}^{\infty}_{1,-\rho}$. 
The duality 
$(\mathfrak{f}^\infty_{1,-\rho})'=F^1_{1,\rho}$ 
together with the fact that 
$\langle \overline{\mathfrak{h}_b(f)},g\rangle
=\langle f, \overline{\mathfrak{h}_b(g)}\rangle
=\langle fg,b\rangle$, give that $\mathfrak{h}_b$ 
 is compact in $F^{1}_{1,\rho}$  
if and only if it is compact in 
$\mathfrak{f}^{\infty}_{1,-\rho}$, which implies   
$b\in \mathfrak{f}^\infty_{\frac 12}$. 

Finally, if $\mathfrak{h}_b$ 
 is compact in $F^{\infty}_{1,\rho}$  then 
$b\in F^\infty_{\frac 12}$ and $\mathfrak{h}_b$ 
 is bounded on $F^{1}_{1,-\rho}$. 
So it  is compact on $F^{\infty}_{1,\rho}$  
if and only if it is compact on 
$F^1_{1,-\rho}$, which  implies   that 
$b\in \mathfrak{f}^\infty_{\frac 12}$.

\subsection{Proof of Theorem \ref{thm:hankel}\eqref{item:hankelschat}}\quad\par

In this section we prove that 
 $\mathfrak{h}_{b}$ is in 
$S_p(F^2_{1,\rho})=S_p(F^2_{1,\rho}, 
\overline{F^2_{1,\rho}})$ if and only if
$b\in F^p_{\frac 12,2n(\ell-1)/p}$, and, in this case, 
$\|\mathfrak{h}_{b}\|_{S_p(F^2_{1,\rho})}
\simeq \|b\|_{F^p_{\frac 12,2n(\ell-1)/p}}$.

We start this section recalling 
some well-known results concerning to the Schatten 
class $S_p(H_0,H_1)$, where $H_0$ and $H_1$ are 
separable complex   Hilbert  spaces. See, for instance, 
\cite[Chapter 7]{Weidmann}.

Let $T$ be a compact linear  operator from $H_0$ to 
$H_1$.  Then $|T|:=(T^*T)^{1/2}$ is a compact positive 
operator on $H_0$, so we may consider its sequence 
of eigenvalues  $\{s_k(T)\}_{k\in\N}$, which are usually 
called the singular values of $T$.

For $0<p<\infty$, the Schatten class $S_p(H_0,H_1)$ 
consists of all compact linear  operators $T$ from 
$H_0$ to $H_1$ such that 
$$
\|T\|_{S_p(H_0,H_1)}^p
:=\sum_{k=1}^{\infty} s_k(T)^p<\infty.
$$
Moreover, $S_\infty(H_0,H_1)$ is the space of all the 
bounded linear operators from $H_0$ to $H_1$.

Note that $(S_p(H_0,H_1), \|\cdot\|_{S_p(H_0,H_1)})$ 
is a Banach space for $p\ge 1$ and a quasi-Banach space for $p<1$.
Moreover, since $ \|T\|_{S_q(H_0,H_1)} \le \|T\|_{S_p(H_0,H_1)}$ for $p<q$ and $T\in S_p(H_0,H_1)$, we have the embedding 
\begin{equation*}
	S_p(H_0,H_1)\hookrightarrow S_q(H_0,H_1),\qquad (0<p<q\le\infty).
	\end{equation*}

By using the polar decomposition of $T$, it turns out that there exist two orthonormal systems $\{u_k\}_{k\in\N}$ and $\{v_k\}_{k\in\N}$ of $H_0$ 
and $H_1$, respectively, such that
\[
T(f)=\sum_{k=1}^{\infty} s_k(T)
\langle f,u_k\rangle_{_{H_0}}v_k.
\]

Note that if 
$T_k(f):=s_k(T)\langle f,u_k\rangle_{_{H_0}}v_k$, 
then $\|T_k\|_{S_p(H_0,H_1)}=s_k(T)$. 
So if $T\in S_1(H_0,H_1)$, then the rank one operators 
$T_k$ satisfy
\begin{equation}
	\label{eqn:TS1}
	\sum_{k=1}^n T_k\to T\mbox{ in $S_1(H_0,H_1)$}
	\,\,\mbox{ and }\,\,
	\Bigl\|\sum_{k=1}^n T_k\Bigr\|_{S_1(H_0,H_1)}
=\sum_{k=1}^n \|T_k\|_{S_1(H_0,H_1)}.
\end{equation}

We end this section by recalling the interpolation identity
\begin{equation}
\label{eqn:interpolationS}
\left(S_1(H_0,H_1),S_\infty(H_0,H_1)\right)_{[\theta]}
=S_{1/(1-\theta)}(H_0,H_1) \qquad(0<\theta<1).
\end{equation}
See, for instance, \cite[Theorem 2.6]{Zhu2007}.	

The following lemma  is easy to check.

\begin{lem}\label{lem:rankoneS}
$T:F^{2}_{1,\rho}\to\overline{F^{2}_{1,\rho}}$ 
is a bounded linear operator of rank one  
if and only if   there 
are non zero functions $g\in F^{2}_{1,-\rho}$ and  $h\in 
F^{2}_{1,\rho}$ such that $T(f)=\langle f,g 
\rangle\overline{h}$, for any $f\in F^{2}_{1,\rho}$.	
	Moreover,  in this case, $\|T\|_{S_p(F^{2}_{1,\rho})}
	\simeq\|g\|_{F^{2}_{1,-\rho}}\|h\|_{F^{2}_{1,\rho}}$, for 
$1\le p\le\infty$.
\end{lem}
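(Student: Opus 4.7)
The plan is to reduce the entire lemma to the duality statement of Proposition~\ref{prop:dualF}. For the forward direction, if $T$ is bounded and of rank one, then its range is a one-dimensional subspace of $\overline{F^{2}_{1,\rho}}$, so there exist a nonzero $h\in F^{2}_{1,\rho}$ and a (necessarily bounded) linear functional $\lambda$ on $F^{2}_{1,\rho}$ with $T(f)=\lambda(f)\overline{h}$. Applying Proposition~\ref{prop:dualF} with $p=2$ and $\gamma=\alpha=1$, we identify $(F^{2}_{1,\rho})'$ with $F^{2}_{1,-\rho}$ via the $1$-pairing, producing $g\in F^{2}_{1,-\rho}$ satisfying $\lambda(f)=\langle f,g\rangle$ together with $\|\lambda\|\simeq\|g\|_{F^{2}_{1,-\rho}}$. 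The converse is essentially tautological: for nonzero $g\in F^{2}_{1,-\rho}$ and $h\in F^{2}_{1,\rho}$, the operator $T(f):=\langle f,g\rangle\overline{h}$ is patently of rank one, and it is bounded by the same duality estimate.

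For the norm identification, the key observation is that a rank one operator on Hilbert spaces has exactly one nonzero singular value, which equals its operator norm; therefore $\|T\|_{S_p(F^{2}_{1,\rho})}=\|T\|_{\mathrm{op}}$ for every $p\in[1,\infty]$. Since complex conjugation is an isometry from $F^{2}_{1,\rho}$ onto $\overline{F^{2}_{1,\rho}}$, one computes
$$
\|T\|_{\mathrm{op}}=\|h\|_{F^{2}_{1,\rho}}\sup_{\|f\|_{F^{2}_{1,\rho}}=1}|\langle f,g\rangle|\simeq\|h\|_{F^{2}_{1,\rho}}\|g\|_{F^{2}_{1,-\rho}},
$$
the equivalence again coming from Proposition~\ref{prop:dualF}.

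The only subtlety to keep in mind is that the $1$-pairing used to define $T$ is \emph{not} the Hilbert space inner product that actually determines the singular values of $T$; the two are related by the Riesz-type isomorphism furnished by the duality. Since a rank one operator is completely described by a single pair of vectors and a single singular value, this distinction has no practical bearing on the argument, and I do not expect any serious analytical obstacle.
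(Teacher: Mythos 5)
Your proposal is correct and fills in the argument that the paper dismisses as ``easy to check'' without supplying details. The two ingredients you use — that the $1$-pairing realizes the duality $(F^2_{1,\rho})'\simeq F^2_{1,-\rho}$ by Proposition~\ref{prop:dualF}, and that a rank-one operator has a single nonzero singular value equal to its operator norm, so all $S_p$-norms coincide — are exactly the natural route, and your closing caveat about the $1$-pairing not being the ambient inner product is well placed but, as you note, immaterial for rank-one operators.
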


\subsubsection{\textbf{Proof of the sufficient 
condition}} \quad\par

The sufficient condition is a direct consequence of the 
following result.

\begin{prop}\label{prop:Schatten-suf}
	For $1\le p\le\infty$, 
	the operator $b\mapsto \mathfrak{h}_{b}$ 
 is  bounded from $F^p_{\frac 12, \frac{2n(\ell-1)}p}$ to $S_p(F^{2}_{1,\rho})$.
\end{prop}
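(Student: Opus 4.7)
The plan is to prove the endpoint estimates at $p=1$ and $p=\infty$, and then interpolate via \eqref{eqn:interpolationS}.

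The endpoint $p=\infty$ is immediate: $S_\infty(F^2_{1,\rho})$ is the space of bounded operators and $F^\infty_{1/2,0}=F^\infty_{1/2}$, so the bound is part \eqref{item:hankelbound} of Theorem \ref{thm:hankel}, which has already been established.

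For $p=1$, the crucial observation is that for each $w\in\C^n$ the operator $\mathfrak{h}_{K_{1/2}(\cdot,w)}$ is rank one. This follows from the identity
\begin{equation*}
\langle h,K_{1/2}(\cdot,w)\rangle_1 = 2^{-n/\ell}\,h(2^{-1/\ell}w)\qquad(h\in F^2_1),
\end{equation*}
which one obtains by monomial expansion and orthogonality of $\{w^\nu\}$ in $F^2_1$, using the ratio $\|w^\nu\|^2_{F^2_1}/\|w^\nu\|^2_{F^2_{1/2}} = 2^{-(|\nu|+n)/\ell}$ from Lemma \ref{lem:Bergmankernel}. Applying this with $h(\xi)=f(\xi)K_1(\xi,z)$ and invoking the dilation identity \eqref{eqn:Kdelta} yields
\begin{equation*}
\mathfrak{h}_{K_{1/2}(\cdot,w)}(f) = f(2^{-1/\ell}w)\,\overline{K_{1/2}(\cdot,w)} = \langle f,K_1(\cdot,2^{-1/\ell}w)\rangle_1\,\overline{K_{1/2}(\cdot,w)}.
\end{equation*}
Lemma \ref{lem:rankoneS} together with Proposition \ref{prop:pqnormBergman} then gives, with the $\pm\rho$ polynomial factors cancelling in the product,
\begin{equation*}
\|\mathfrak{h}_{K_{1/2}(\cdot,w)}\|_{S_1(F^2_{1,\rho})}\simeq \|K_1(\cdot,2^{-1/\ell}w)\|_{F^2_{1,-\rho}}\,\|K_{1/2}(\cdot,w)\|_{F^2_{1,\rho}}\simeq(1+|w|)^{2n(\ell-1)}e^{|w|^{2\ell}/4}.
\end{equation*}

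Since $P_{1/2}$ is the identity on $F^1_{1/2,2n(\ell-1)}$ by Proposition \ref{prop:Ponto}, the reproducing formula $b(z)=\int b(w)K_{1/2}(z,w)e^{-|w|^{2\ell}/2}dV(w)$ together with the conjugate-linearity of $b\mapsto\mathfrak{h}_b$ gives
\begin{equation*}
\mathfrak{h}_b=\int_{\C^n}\overline{b(w)}\,\mathfrak{h}_{K_{1/2}(\cdot,w)}\,e^{-|w|^{2\ell}/2}\,dV(w),
\end{equation*}
established first for polynomial $b$ (where the integral reduces to a finite sum, using that polynomials are dense in $F^1_{1/2,2n(\ell-1)}$) and then extended to arbitrary $b\in F^1_{1/2,2n(\ell-1)}$. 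Taking $S_1$-norms under the integral yields $\|\mathfrak{h}_b\|_{S_1}\lesssim\|b\|_{F^1_{1/2,2n(\ell-1)}}$. For intermediate $p$ I would combine \eqref{eqn:interpolationS} with a weighted version of Lemma \ref{lem:interpolation}, namely $(F^1_{1/2,2n(\ell-1)},F^\infty_{1/2})_{[\theta]}=F^p_{1/2,2n(\ell-1)/p}$ for $\theta=1-1/p$, to conclude. The principal obstacles I anticipate are the careful series-expansion derivation of the rank-one identity (including justification of series convergence and interchange of summation and integration), and verifying the interpolation of Fock-Sobolev spaces with varying polynomial exponent $\rho$, since Lemma \ref{lem:interpolation} is stated only for fixed $\rho$ and a Stein--Weiss type weighted $L^p$-interpolation argument will be needed.
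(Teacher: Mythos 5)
Your proposal is correct and follows essentially the same route as the paper: reduce to the endpoints, quote the boundedness result for $p=\infty$, for $p=1$ write $\mathfrak{h}_b$ as an operator-valued integral of the rank-one operators $\mathfrak{h}_{K_{1/2}(\cdot,w)}$ (whose $S_1$-norms you compute exactly as the paper does), and interpolate using the retract/change-of-density identification $(F^1_{1/2,2n(\ell-1)},F^{\infty}_{1/2})_{[1/p']}=F^p_{1/2,2n(\ell-1)/p}$. The only inaccuracies are cosmetic: the operator integral does not become a finite sum for polynomial $b$ (the paper instead verifies Bochner integrability directly via strong measurability of $w\mapsto\mathfrak{h}_{K_{1/2}(\cdot,w)}$), and no Stein--Weiss theorem is needed since after factoring out the Gaussian the polynomial weight $(1+|z|)^{2n(\ell-1)}$ is the same at both endpoints.
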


In order to prove Proposition \ref{prop:Schatten-suf},
 we will need the following  interpolation  Lemma. 

\begin{lem}
Let $1<p<\infty$. Then
\begin{align}
\label{item:interpolationL}
(L^{1}_{1/2,2n(\ell-1)}, L^{\infty}_{1/2})_{[1/p']}&=
L^{p}_{1/2,2n(\ell-1)/p},\qquad \text{and}\\
\label{item:interpolationF}
(F^{1}_{1/2,2n(\ell-1)}, F^{\infty}_{1/2})_{[1/p']}&=
F^{p}_{1/2,2n(\ell-1)/p},
\end{align}
\end{lem}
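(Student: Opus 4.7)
The plan is to establish (\ref{item:interpolationL}) by appealing to the classical complex interpolation theorem for weighted $L^p$-spaces, and then to deduce (\ref{item:interpolationF}) from it by using the Bergman projection $P_{1/2}$ as a bounded retraction from $L^p_{1/2,\rho}$ onto $F^p_{1/2,\rho}$.

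For (\ref{item:interpolationL}), I would rewrite the norm on $L^p_{1/2,\rho}$ as $\|f\|_{L^p_{1/2,\rho}}=\|f\phi_\rho\|_{L^p(dV)}$, where $\phi_\rho(z):=(1+|z|)^{\rho}e^{-|z|^{2\ell}/4}$, so that $L^p_{1/2,\rho}$ is literally a weighted $L^p$-space. Calder\'on's theorem on complex interpolation of weighted $L^p$-spaces (see, e.g., Bergh--L\"ofstr\"om, Theorem~5.5.3) asserts that for any Banach couple $(L^{p_0}(\phi_0),L^{p_1}(\phi_1))$ and any $\theta\in(0,1)$,
\[
(L^{p_0}(\phi_0),L^{p_1}(\phi_1))_{[\theta]}=L^p(\phi),\qquad \tfrac{1}{p}=\tfrac{1-\theta}{p_0}+\tfrac{\theta}{p_1},\quad \phi=\phi_0^{1-\theta}\phi_1^{\theta}.
\]
Specializing to $p_0=1$, $\phi_0=\phi_{2n(\ell-1)}$, $p_1=\infty$, $\phi_1(z)=e^{-|z|^{2\ell}/4}$ and $\theta=1/p'$, the interpolated exponent is $p$ and the interpolated weight is
$\phi_0^{1/p}\phi_1^{1/p'}=(1+|z|)^{2n(\ell-1)/p}e^{-|z|^{2\ell}/4}=\phi_{2n(\ell-1)/p}$, which is the weight defining $L^p_{1/2,2n(\ell-1)/p}$.

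For (\ref{item:interpolationF}), Proposition~\ref{prop:Ponto} applied with $\alpha=\gamma=1/2$ shows that $P_{1/2}:L^p_{1/2,\rho}\to F^p_{1/2,\rho}$ is a bounded projection for every $p\in[1,\infty]$ and every $\rho\in\R$, and coincides with the identity on $F^p_{1/2,\rho}$. Hence the inclusion $F^p_{1/2,\rho}\hookrightarrow L^p_{1/2,\rho}$ and the operator $P_{1/2}$ form a coretraction--retraction pair realizing $F^p_{1/2,\rho}$ as a complemented subspace of $L^p_{1/2,\rho}$, uniformly in $p$ and $\rho$. By the standard functorial property of complex interpolation under retracts (the device already used to derive Lemma~\ref{lem:interpolation}), combined with (\ref{item:interpolationL}), I obtain
\[
(F^1_{1/2,2n(\ell-1)},F^\infty_{1/2})_{[1/p']}=P_{1/2}\bigl(L^p_{1/2,2n(\ell-1)/p}\bigr)=F^p_{1/2,2n(\ell-1)/p}.
\]

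The main technical point to verify is that $P_{1/2}$ is a single integral operator acting compatibly on both endpoint $L^p$-spaces, so that it genuinely induces a bounded map on the sum $L^1_{1/2,2n(\ell-1)}+L^\infty_{1/2}$; this is immediate from the pointwise kernel bounds of Proposition~\ref{prop:estimatesK}, which ensure absolute convergence of $P_{1/2}f(z)$ for every $f$ in the sum space. Once this compatibility and the cited Calder\'on interpolation theorem are in hand, both identities follow with only routine book-keeping of the weight exponents.
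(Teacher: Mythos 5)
Your proof is correct and follows essentially the same route as the paper: the paper obtains \eqref{item:interpolationL} by noting that $f\mapsto f\,e^{-|z|^{2\ell}/4}$ turns all three spaces into $L^p$ over the single measure $(1+|z|)^{2n(\ell-1)}dV$ (possible because the polynomial exponent scales like $1/p$) and then applies Riesz--Thorin, whereas you invoke Calder\'on's weighted interpolation theorem directly -- a purely cosmetic difference. The retract argument via $P_{1/2}$ for \eqref{item:interpolationF} is exactly the paper's.
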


\begin{proof}
We begin with the proof of \eqref{item:interpolationL}.
Since $f\mapsto f(z)e^{-\frac{|z|^{2\ell}}2}$ is an 
isometric isomorphism from  $L^{p}_{1/2,2n(\ell-1)/p}$  
onto  $L^p((1+|z|)^{2n(\ell-1)}\,dV(z))$, Riesz-Thorin theorem gives \eqref{item:interpolationL}.

By Proposition \ref{prop:Ponto}, $P_{\frac 12}$ is 
bounded from $L^{p}_{1/2,2n(\ell-1)/p}$ to 
$F^{p}_{1/2,2n(\ell-1)/p}$ and it is the identity on 
$F^{p}_{1/2,2n(\ell-1)/p}\hookrightarrow 
L^{p}_{1/2,2n(\ell-1)/p}$. Thus  
$F^{p}_{1/2,2n(\ell-1)/p}$ is a retract of 
$L^{p}_{1/2,2n(\ell-1)/p}$, $1\le p\le\infty$ and, 
consequently, \eqref{item:interpolationF}
 follows from \eqref{item:interpolationL}.
\end{proof}

\begin{proof}[Proof of Proposition
\ref{prop:Schatten-suf}] 
By the interpolation identities 
\eqref{item:interpolationF} and 
\eqref{eqn:interpolationS}
 it is enough to prove the result for $p=1$ and $p=\infty$. 
Since the last case has been done  in the previous 
section, we only have to deal with the case $p=1$.

Assume 
$b\in F^1_{\frac 12, 2n(\ell-1)}$.  
By Corollary \ref{cor:pointwise}, 
$b\in F^\infty_{\frac 12}$ and $b=P_{\frac 12}b$. 
Therefore, for $f\in E$ we have
\begin{align*}
(\mathfrak{h}_{b}(f))\,(z)&
=\int_{\C^n} f(u)\, 
\overline{b(u)}\,K(u,z)\,e^{-|u|^{2\ell}}dV(u)\\
&=\int_{\C^n} f(u)
\int_{\C^n}\overline{b(w)}\,K_{\frac 12}(w,u)\, 
e^{-\frac{|w|^{2\ell}}2}dV(w)\,K(u,z)\,e^{-|u|^{2\ell}}dV(u),
\end{align*}
and  Fubini's theorem gives
\begin{equation}
\label{eqn:h:Bochner}
(\mathfrak{h}_{b}(f))\,(z)
=\int_{\C^n}\overline{b(w)}\,
(\mathfrak{h}_{K_{\frac 12}(\cdot,w)} f)(z)\, 
e^{-\frac{|w|^{2\ell}}2}dV(w).
\end{equation}

This allows us to consider  the  following Bochner 
integral
\begin{equation}
\label{eqn:operator:Bochner:integral}
\int_{\C}\overline{b(w)}\,
{\mathfrak h}_{K_{1/2}(\cdot,w)}\,
e^{-\frac{|w|^{2\ell}}2}dV(w).
\end{equation}

 By Bochner's integrability theorem  
(see for instance \cite[p. 133]{yosida}), the   
 $S_1(F^{2}_{1,\rho})$-convergence of 
the Bochner's 
integral~{\eqref{eqn:operator:Bochner:integral}}  
means that the integrand
\begin{equation*}
S(w):=\overline{b(w)}\, 
      {\mathfrak h}_{K_{1/2}(\cdot,w)}
\end{equation*}
is an $S_1(F^{2}_{1,\rho})$-valued strongly measurable 
function on $\C$ which satisfies
\begin{equation}\label{eq:Bochner:integrability2}
\int_{\C}\|S(w)\|_{S_1(F^{2}_{1,\rho})}
\,e^{-\frac{|w|^{2\ell}}2}dV(w)
<\infty.
\end{equation}

We are going to show that $S(w)$ is an operator of 
rank at most one, for every $w\in\C$, and next we estimate its $S_1(F^{2,\ell}_{1,\rho})$-norm.

For any $w\in\C$ and $f\in E$, we have 
\begin{equation}\label{eqn:hankelK121}
\mathfrak{h}_{K_{1/2}(\cdot,w)}(f)(z)
= 2^{-n/\ell}
   \langle f, K(\cdot,2^{-1/\ell} w)\rangle\,  
   K(2^{-1/\ell} w,z).
\end{equation}
Indeed, by \eqref{eqn:Kdelta},  
$K_{1/2}(\cdot,w)
=2^{-n/\ell} K(\cdot,2^{-1/\ell} w)$.
Therefore
	\begin{align*}
\mathfrak{h}_{K_{1/2}(\cdot,w)}(f)(z)
	&=2^{-n/\ell}
	\langle f K(\cdot,z),\,K(\cdot,2^{-1/\ell}w)
	\rangle\\
	&=2^{-n/\ell}  f(2^{-1/\ell}  w)K(2^{-1/\ell}  w,z)\\
	&=2^{-n/\ell}
	\langle f, K(\cdot,2^{-1/\ell} w)\rangle\,  
	K(2^{-1/\ell} w,z).
\end{align*}

So  $\mathfrak{h}_{K_{1/2}(\cdot,w)}$ is an operator 
of rank one and, 
by Lemma \ref{lem:rankoneS} and Proposition 
\ref{prop:pqnormBergman}, we obtain
\begin{equation}\label{eqn:hankelK122}\begin{split}
\|\mathfrak{h}_{K_{1/2}(\cdot,w)}\|_{S_1(F^{2}_{1,\rho})}
&\simeq \|K(\cdot,2^{-1/\ell} w)\|_{F^{2}_{1,-\rho}}
\|K(\cdot,2^{-1/\ell}  w)\|_{F^{2}_{1,\rho}}\\
&\simeq (1+|w|)^{2n(\ell-1)}\, e^{\frac{|w|^{2\ell}}4}.
\end{split}\end{equation}

Observe that \eqref{eqn:hankelK121} shows 
that $S$ is an $S_1(F^{2}_{1,\rho})$-valued function on 
$\C$. 
Moreover, it is $S_1(F^{2}_{1,\rho})$-strongly 
measurable because
\[
w\in\C\longmapsto\mathfrak{h}_{K_{1/2}(\cdot,w)}\in
S_1(F^{2}_{1,\rho})
\]
is continuous. That follows because 
$\mathfrak{h}_{K_{1/2}(\cdot,w)}
-\mathfrak{h}_{K_{1/2}(\cdot,v)}$ has rank at 
most~{$2$} and so
\begin{align*}
\|\mathfrak{h}_{K_{1/2}(\cdot,w)}
-\mathfrak{h}_{K_{1/2}(\cdot,v)}\|_{S_1(F^{2}_{1,\rho})}
 &\le 2\,
\|\mathfrak{h}_{\{K_{1/2}(\cdot,w)
-K_{1/2}(\cdot,v)\}}\|
_{S_{\infty}(F^{2}_{1,\rho})}\\
 &\stackrel{\mbox{\tiny$(1)$}}{\lesssim} \|K_{1/2}(\cdot,w)
            -K_{1/2}(\cdot,v)\|_{F^{\infty}_{1/2}}\\
 &\stackrel{\mbox{\tiny$(2)$}}{\lesssim} \|K_{1/2}(\cdot,w)
-K_{1/2}(\cdot,v)\|_{F^1_{\frac 12, 2n(\ell-1)}}
\stackrel{\mbox{\tiny$(3)$}}{\longrightarrow} 0,            
\end{align*}
as $w\to v$, where $(1)$, $(2)$ and $(3)$ are  
consequences of 
Theorem~{\ref{thm:hankel}\eqref{item:hankelbound}}, 
Corollary~{\ref{cor:pointwise}} and the dominated 
convergence theorem, respectively.

Now \eqref{eqn:hankelK122}
gives~{\eqref{eq:Bochner:integrability2}}:
\begin{align*}
\int_{\C}\|S(w)\|_{S_1(F^{2,\ell}_1)}\, 
e^{-\frac{|w|^{2\ell}}2}dV(w)
\lesssim \int_{\C}|b(w)|\,(1+|w|)^{2n(\ell-1)}
e^{-\frac{|w|^{2\ell}}4}\,dV(w).
\end{align*}
Therefore, by \eqref{eqn:h:Bochner},
${\mathfrak h}_{b}\in S_1(F^{2}_{1,\rho})$ and
$
\|{\mathfrak h}_{b}\|_{S_1(F^{2}_{1,\rho})}
\lesssim \|b\|_{F^{1}_{1/2,2n(\ell-1)}}.
$
\end{proof}

\subsubsection{\textbf{Proof of necessary condition}} \quad\par

The following definition is motived by \eqref{eqn:repreb}.
\begin{defn}
	For $T\in S_\infty(F^{2}_{1,\rho})$, let 
	\[
\Phi_T(z):=\sum_{k=0}^n\langle H_k(\cdot\,\overline{z}),
\overline{T\bigl(G_k(\cdot\,\overline{z})\bigr)}\, \rangle
\qquad(z\in\C).
	\]
\end{defn}	

Observe that $\Phi_{\mathfrak{h}_b}=\overline{b}$, 
by \eqref{eqn:repreb}. 
Therefore the necessary part in 
Theorem~{\ref{thm:hankel}}\eqref{item:hankelschat} is a 
direct consequence of the following proposition.

\begin{prop}
	For $1\le p\le\infty$, 
	the linear operator $T\mapsto \Phi_T$ is  bounded 
from $S_p(F^{2}_{1,\rho})$ to $L^{p}_{1/2,2n(\ell-1)/p}$.
\end{prop}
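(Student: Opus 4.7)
The plan is to establish boundedness at the two endpoints $p=\infty$ and $p=1$, and then invoke the complex interpolation identities \eqref{item:interpolationL} and \eqref{eqn:interpolationS} to recover $1<p<\infty$.

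For $p=\infty$, given $T\in S_\infty(F^2_{1,\rho})$ and $z\in\mathbb C^n$, I would estimate each of the $n+1$ terms in $\Phi_T(z)$ by Cauchy--Schwarz with respect to the $1$-pairing (which realises the duality $(F^2_{1,\rho})^\ast=F^2_{1,-\rho}$ from Proposition \ref{prop:dualF}):
\begin{equation*}
|\langle H_k(\cdot\,\overline z),\,\overline{T(G_k(\cdot\,\overline z))}\rangle_1|
\le \|T\|_{S_\infty}\,\|H_k(\cdot\,\overline z)\|_{F^2_{1,-\rho}}\,\|G_k(\cdot\,\overline z)\|_{F^2_{1,\rho}}.
\end{equation*}
Summing over $k$, the joint norm estimate in Theorem \ref{thm:decompK2} (applied with $p=2$, $\alpha=\beta=\gamma=1$, $\eta=-\rho$) identifies
$\sum_k\|G_k(\cdot\,\overline z)\|_{F^2_{1,\rho}}\|H_k(\cdot\,\overline z)\|_{F^2_{1,-\rho}}\simeq\|K_1(\cdot,z)\|_{F^1_{2}}\simeq e^{|z|^{2\ell}/4}$, the last equivalence being Proposition \ref{prop:pqnormBergman}. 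Thus $|\Phi_T(z)|\lesssim\|T\|_{S_\infty}\,e^{|z|^{2\ell}/4}$, i.e.\ $\|\Phi_T\|_{L^\infty_{1/2}}\lesssim\|T\|_{S_\infty}$.

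For $p=1$, the decomposition \eqref{eqn:TS1} writes any $T\in S_1$ as a convergent sum $T=\sum_j T_j$ of rank-one operators with $\|T\|_{S_1}=\sum_j\|T_j\|_{S_1}$. Since $T\mapsto\Phi_T$ is linear and continuous from $S_\infty$ into pointwise limits, Fatou's lemma reduces the claim to rank-one operators. By Lemma \ref{lem:rankoneS}, a rank-one $T_0$ has the form $T_0(f)=\langle f,g\rangle_1\,\overline h$ with $g\in F^2_{1,-\rho}$, $h\in F^2_{1,\rho}$, and $\|T_0\|_{S_1}\simeq\|g\|_{F^2_{1,-\rho}}\|h\|_{F^2_{1,\rho}}$. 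A direct expansion gives
\begin{equation*}
\Phi_{T_0}(z)=\sum_{k=0}^n\langle G_k(\cdot\,\overline z),g\rangle_1\,\langle H_k(\cdot\,\overline z),h\rangle_1.
\end{equation*}
To bound $\|\Phi_{T_0}\|_{L^1_{1/2,2n(\ell-1)}}$, I would unfold each pairing by Fubini, bringing the $z$-integral inside, so that the $L^1$-norm is dominated by
$\sum_k\iint|g(w)h(v)|\,I_k(w,v)\,e^{-|w|^{2\ell}-|v|^{2\ell}}\,dV(w)\,dV(v)$, where
\begin{equation*}
I_k(w,v):=\int_{\mathbb C^n}|G_k(w\,\overline z)\,H_k(v\,\overline z)|\,(1+|z|)^{2n(\ell-1)}\,e^{-|z|^{2\ell}/4}\,dV(z).
\end{equation*}
The pointwise estimates \eqref{eqn:factorizationE1}--\eqref{eqn:factorizationE2} for the Mittag-Leffler factors, combined with the completion-of-the-square computation behind Lemma \ref{lem:Phi}, give $I_k(w,v)\lesssim(1+|w|)^{a_k}(1+|v|)^{b_k}e^{(|w|^{2\ell}+|v|^{2\ell})/2}$ with exponents $a_k,b_k$ summing to $2(n-1)(\ell-1)+2n(\ell-1)$; the remaining double integral in $(w,v)$ is then controlled by $\|g\|_{F^2_{1,-\rho}}\|h\|_{F^2_{1,\rho}}$ via Cauchy--Schwarz.

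Finally, complex interpolation yields the Proposition for all $1\le p\le\infty$. The main obstacle is the rank-one $L^1$ estimate: the naive pointwise bound obtained by Cauchy--Schwarz inside the pairings gives only $|\Phi_{T_0}(z)|\lesssim\|T_0\|_{S_1}\,e^{|z|^{2\ell}/4}$, which just fails to be integrable against the weight $(1+|z|)^{2n(\ell-1)}e^{-|z|^{2\ell}/4}$. One is therefore forced to integrate in $z$ before estimating, exploiting the Mittag-Leffler oscillations encoded by $\varphi_c$ to extract the polynomial decay that the pointwise estimate misses.
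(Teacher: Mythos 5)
Your $p=\infty$ endpoint and the reduction of the $p=1$ endpoint to rank-one operators (via the $S_1$-decomposition \eqref{eqn:TS1}, continuity of $T\mapsto\Phi_T(z)$, and Fatou) are exactly the paper's steps. The gap is in your treatment of the rank-one $L^1$ estimate: after unfolding the pairings and passing to the two-variable kernel $I_k(w,v)$, you claim a product-form pointwise bound $I_k(w,v)\lesssim(1+|w|)^{a_k}(1+|v|)^{b_k}e^{(|w|^{2\ell}+|v|^{2\ell})/2}$, and then propose to close by Cauchy--Schwarz in $(w,v)$. But once the exponential $e^{(|w|^{2\ell}+|v|^{2\ell})/2}$ is inserted into the double integral against $e^{-|w|^{2\ell}-|v|^{2\ell}}$, the remaining weight on each variable is $e^{-|w|^{2\ell}/2}$, so you are trying to bound $\int|g(w)|(1+|w|)^{a_k}e^{-|w|^{2\ell}/2}\,dV(w)$ by $\|g\|_{F^2_{1,-\rho}}$. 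Cauchy--Schwarz there produces $\bigl(\int(1+|w|)^{2(a_k+\rho)}\,dV(w)\bigr)^{1/2}$, which diverges. The product bound has thrown away exactly the off-diagonal decay (the cross term in the completed square) that one would need, and the proposed final Cauchy--Schwarz cannot work once that information is lost.

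The paper avoids this by \emph{not} unfolding the pairings. It first applies Cauchy--Schwarz in the $z$-integral alone, which factors $\|\Phi_T\|_{L^1}$ into $\sum_k I_k J_k$ with $I_k^2=\int|\langle G_k(\cdot\,\overline z),g\rangle|^2\,(1+|z|)^{\cdots}e^{-|z|^{2\ell}/4}\,dV(z)$ and the analogous $J_k$ for $h$, thereby decoupling $g$ from $h$ at the cost of a power $2$ on each pairing. Each factor is then bounded by a \emph{second} Cauchy--Schwarz inside the pairing, splitting $|G_k(w\overline z)|=|G_k(w\overline z)|^{1/2}\cdot|G_k(w\overline z)|^{1/2}$ so that one half stays with $|g(w)|^2$ and the other is integrated alone; after Fubini in $z$, the surviving $w$-integrand involves $|g(w)|^2\,\|G_k(\cdot\,\overline w)\|_{L^1_{1/4,\cdots}}\,e^{-3|w|^{2\ell}/2}$, and Proposition \ref{prop:claims} gives $\|G_k(\cdot\,\overline w)\|_{L^1_{1/4,\cdots}}\simeq(1+|w|)^{\cdots}e^{|w|^{2\ell}/2}$, so the exponentials combine to the exact $F^2_{1,-\rho}$ weight. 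This "half-power" splitting is the mechanism that keeps the exponentials from cancelling prematurely; your pointwise-kernel-plus-Cauchy--Schwarz route does not have an analogue of it, and as written it does not close.
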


\begin{proof}
It is easy to check that $\Phi_T$ is a continuous function 
on $\C$. 
Indeed, if $z_j\to z$ in $\C$, estimates \eqref{claim:E1} 
and \eqref{claim:R1} and the dominated convergence 
theorem imply that
\[
H_k(\cdot\,\overline z_j)\to H_k(\cdot\,\overline z) 
\quad\mbox{in $F^{2}_{1,-\rho}$}
\quad\mbox{and}\qquad
G_k(\cdot\,\overline z_j)\to G_k(\cdot\,\overline z)
\quad\mbox{in $F^{2}_{1,\rho}$.}
 \]

So, taking into account the interpolation identities 
~{\eqref{eqn:interpolationS}} and 
\eqref{item:interpolationL},  it is enough to prove the 
proposition for $p=1$ and $p=\infty$.

The case $p=\infty$ follows from Schwarz inequality, 
the boundedness of $T$ and \eqref{eqn:decompK2}:
$$
|\Phi_T(z)|
\lesssim \|T\|_{S_\infty(F^{2}_{1,\rho})}\,
    \sum_{k=0}^n
 \|G_{k}(\cdot\overline{z})\|
_{F^2_{1,\rho}}
\|H_{k}(\cdot\overline{z})\|
_{F^{2}_{1,-\rho}}
\simeq  \|T\|_{S_\infty(F^{2}_{1,\rho})}
 e^{\frac{|z|^{2\ell}}{4}}.
$$
	
Now we prove the case $p=1$, that is, 
\begin{equation}\label{eq:operator:estimate}
\|\Phi_T\|_{L^{1}_{1/2,2n(\ell-1)}}
\lesssim\|T\|_{S_1(F^{2}_{1,\rho})}
\qquad(T\in S_1(F^{2}_{1,\rho})).
\end{equation}
By \eqref{eqn:TS1} we only have to 
prove~{\eqref{eq:operator:estimate}} 
for operators of rank one. So, taking into account  
Lemma \ref{lem:rankoneS}, we may assume that $T$ 
satisfies
\[
T(f)=\langle f,g\rangle\, \overline{h}
\qquad(f\in F^{2}_{1,\rho}), 
\]	
for some functions $g\in F^{2}_{1,-\rho}$ and 
$h\in F^{2}_{1,\rho}$.

	In this case,
\[
\Phi_T(z)=\sum_{k=0}^n
\langle G_k(\cdot\,\overline{z}),g\rangle\,
\langle H_k(\cdot\,\overline{z}),h\rangle,
\]
and Schwarz inequality gives
\[
\|\Phi_T\|_{L^1_{\frac 12,2n(\ell-1)}}\lesssim 
\sum_{k=0}^n I_k\,J_k,
\]
where
\begin{align*}
I_k^2&:=
\int_{\C^n}\left|\langle 
G_k(\cdot\overline{z}),g\rangle\right|^2\,
(1+|z|)^{-2\rho+2(\ell-1)(2n-2k-1)}e^{-\frac{|z|^{2\ell}}4}
dV(z)\\
J_k^2&:=
\int_{\C^n}\left|\langle 
H_k(\cdot\overline{z}),h\rangle\right|^2\,
(1+|z|)^{2\rho+2(\ell-1)(2k+1)}e^{-\frac{|z|^{2\ell}}4}
dV(z).
\end{align*}

Next we prove that $I_k\lesssim\|g\|_{F^{2}_{1,-\rho}}$ 
and  $J_k\lesssim \|h\|_{F^{2}_{1,\rho}}$, which, by  
Lemma \ref{lem:rankoneS},  give 
\[
\|\Phi_T\|_{L^1_{\frac 12,2n(\ell-1)}}\lesssim 
\|g\|_{F^{2}_{1,-\rho}}\,\|h\|_{F^{2}_{1,\rho}}
\simeq\|T\|_{S_1(F^{2}_{1,\rho})}.
\]

In order to prove the estimate $I_k\lesssim\|g\|_{F^{2}_{1,-\rho}}$, first note that 
Schwarz's inequality gives 
\begin{align*}
|\langle G_k(\cdot\overline{z}),g\rangle|^2
&\lesssim 
\int_{\C^n}|g(w)|^2|G_k(w\overline z)|
e^{-\frac{3|w|^{2\ell}}2}dV(w)\,
\int_{\C^n} |G_k(w\overline z)| 
e^{-\frac{|w|^{2\ell}}2}dV(w).
\end{align*}
Then, by \eqref{claim:E1} and \eqref{claim:R1}, we obtain
\[
|\langle G_k(\cdot\,\overline{z}),g\rangle|^2
\lesssim (1+|z|)^{(\ell-1)(2k+1-2n)}\, e^{\frac{|z|^{2\ell}}8}
\int_{\C^n}|g(w)|^2|G_k(\overline zw)|e^{-\frac{3|w|^{2\ell}}2}dV(w).
\]
Therefore Proposition \ref{prop:claims} with $\gamma=1$, $\alpha=\frac 14$ and $\theta=\frac 12$ 
gives 
\begin{align*}
I_k^2&\lesssim \int_{\C^n}|g(w)|^2
\|G_k(\cdot \overline w)\|
_{L^1_{\frac 14,-2\rho+(\ell-1)(2n-2k-1)}}\,
e^{-\frac{3|w|^{2\ell}}2}dV(w)\\
&\lesssim
\int_{\C^n}|g(w)|^2(1+|w|)^{-2\rho}
e^{-|w|^{2\ell}}dV(w)=\|g\|_{F^{2}_{1,-\rho}}.
\end{align*}

Similarly, replacing $\rho$ and $k$ by $-\rho$ and 
$n-1-k$, respectively, we obtain $J_k\lesssim 
\|h\|_{F^{2}_{1,\rho}}$.
\end{proof}

\end{document}